\documentclass[a4paper, 11pt, reqno]{amsart}
\usepackage{amsmath,amsthm,amssymb,amscd}
\usepackage[english]{babel}
\usepackage[ansinew]{inputenc}
\usepackage[T1]{fontenc}
\usepackage{hyperref}

\usepackage[all]{xy}
\usepackage{xspace}
\usepackage{mathrsfs}
\usepackage{color}
\usepackage{epsfig}
\usepackage{float}
\usepackage{wasysym}
\usepackage{setspace} 
\usepackage{enumerate}
\usepackage{mathdots}

\usepackage{dsfont}

\pagestyle{headings}



\setlength{\oddsidemargin}{0.5cm}
\setlength{\evensidemargin}{0.5cm}
\textwidth 15cm

\renewcommand{\sf}{\mathsf}

\newcommand{\diam}{\mathrm{diam}}

\newcommand{\I}{\mathds{1}}
\newcommand{\N}{{\mathbb N}}
\newcommand{\Z}{{\mathbb Z}}

\newcommand{\R}{{\mathbb R}}

\newcommand{\Lp}{{\ell}}
\newcommand{\supp}{{\rm supp}}

\def\Xint#1{\mathchoice
{\XXint\displaystyle\textstyle{#1}}%
{\XXint\textstyle\scriptstyle{#1}}%
{\XXint\scriptstyle\scriptscriptstyle{#1}}%
{\XXint\scriptscriptstyle\scriptscriptstyle{#1}}%
\!\int}
\def\XXint#1#2#3{{\setbox0=\hbox{$#1{#2#3}{\int}$ }
\vcenter{\hbox{$#2#3$ }}\kern-.6\wd0}}

\def\dashint{\Xint-}

\newcommand{\be}{\begin{enumerate}}
\newcommand{\ee}{\end{enumerate}}

\numberwithin{figure}{section}

\newtheorem{theorem}{Theorem}[section]
\newtheorem{lemma}[theorem]{Lemma}
\newtheorem{proposition}[theorem]{Proposition}
\newtheorem{corollary}[theorem]{Corollary}
\newtheorem{definition}{Definition}[section]

\newenvironment{remark}[1][Remark.]{\begin{trivlist}
\item[\hskip \labelsep \textsc{#1}]}{\end{trivlist}}

\makeatletter

\@addtoreset{equation}{section}
\makeatother

\title[Orlicz spaces and Heintze groups]{Orlicz spaces and the large scale geometry of Heintze groups}
\author{Matias Carrasco Piaggio}
\email{matias@math.u-psud.fr}
\address{Laboratoire de Math\'ematiques d'Orsay}

\begin{document}

\maketitle
\begin{abstract}
We consider an Orlicz space based cohomology for metric (measured) spaces with bounded geometry. We prove the quasi-isometry invariance for a general Young function. In the hyperbolic case, we prove that the degree one cohomology can be identified with an Orlicz-Besov function space on the boundary at infinity. We give some applications to the large scale geometry of homogeneous spaces with negative curvature (Heintze groups). As our main result, we prove that if the Heintze group is not of Carnot type, any self quasi-isometry fixes a distinguished point on the boundary and preserves a certain foliation on the complement of that point.
\end{abstract}
\begin{quote}
\footnotesize{\textsc{Keywords}: Orlicz spaces, Heintze groups, quiasi-isometry invariants, $\delta$-hyperbolicity.}
\end{quote}
\begin{quote}
\footnotesize{\textsc{2010 Subject classification}: 20F67, 30Lxx, 46E30, 53C30.}
\end{quote}

\section{Introduction}\label{intro}

\noindent In this article we are interested in the large scale geometry of Heintze groups. Homogeneous manifolds with negative sectional curvature where characterized by Heintze in \cite{Heintze74}. Each such manifold is isometric to a solvable Lie group $X_\alpha$ with a left invariant metric, and the group $X_\alpha$ is a semi-direct product $N\rtimes_\alpha \R$ where $N$ is a connected, simply connected, nilpotent Lie group, and $\alpha$ is a derivation of $N$ whose eigenvalues all have positive real parts. Such a group is called a \emph{Heintze group}.

\medskip
\noindent A \emph{purely real} Heintze group is a Heintze group $X_\alpha$ as above, for which the action of $\alpha$ on the Lie algebra of $N$ has only real eigenvalues. Every Heintze group is quasi-isometric to a purely real Heintze group, unique up to isomorphism, see \cite[Section 5B]{Cornulier13}. In the sequel we will focus only on purely real Heintze groups.

\medskip
\noindent Let $\mathfrak{n}$ be the Lie algebra of $N$, and $\mathrm{Der}(\mathfrak{n})$ be the Lie algebra of derivations on $\mathfrak{n}$. We denote by $\mathrm{Exp}:\mathrm{Der}(\mathfrak{n})\to\mathrm{Aut}(\mathfrak{n})$ the matrix exponential. The group structure of $X_\alpha$ is then given by a contracting action $\tau:\R\to \text{Aut}(N)$, where $\tau$ satisfies $d_e\tau(t)=\mathrm{Exp}(-t\alpha)$.

\medskip
\noindent We will use the notation $(x,t)$ to denote a point of $X_\alpha$. Any left invariant metric on $X_\alpha$ is Gromov hyperbolic, since any two such metrics are bi-Lipschitz equivalent. Assume that $X_\alpha$ is equipped with a left invariant metric for which the vertical lines $t\mapsto (x,t)$ are unit-speed geodesics. They are all asymptotic when $t\to -\infty$, and hence, they define a ``special'' boundary point denoted $\infty$. The boundary at infinity $\partial X_{\alpha}$ is a topological $n$-sphere, and can be therefore identified with the one-point compactification $N\cup\{\infty\}$. The left action of $X_\alpha$ on its boundary has two orbits, namely, $N$ and $\infty$.

\medskip
\noindent Two general problems motivate this work: first, to understand $\mathrm{QIsom}\left(X_\alpha\right)$, the group of self quasi-isometries of $X_\alpha$; and second, the quasi-isometric classification of Heintze groups. These problems have been approached by many authors and by means of several methods, see for instance \cite{UrsulaHamenstadt87,PierrePansu89d,FarbMosher00,PierrePansu07,TulliaDymarz10,Cornulier-Tessera11,DymarzPeng11,IrinePeng11,XiangdongXie12,NageswariXie12,XiangdongXie14}. We refer the reader to \cite{Cornulier13} for a survey on the subject.

\medskip
\noindent In this article, we focus on the \emph{Pointed Sphere Conjecture} \cite[Conjecture 6.\,C.\,9]{Cornulier13}. It states that $\infty$ is fixed by the boundary homeomorphism of any self quasi-isometry of $X_\alpha$, unless $X_\alpha$ is isometric (for some left invariant metric) to a rank one symmetric space. 

\medskip
\noindent The conjecture is known to be true in the following cases. Recall that $X_\alpha$ is said to be of \emph{Carnot type} if the Lie algebra spanned by the eigenvectors corresponding to the smallest eigenvalue of $\alpha$ is the whole algebra $\mathfrak{n}$ \cite[Definition 2.\,G.\,1]{Cornulier13}.
\begin{itemize}
\item \cite[Corollary 6.\,9]{PierrePansu89d}, the conjecture holds whenever $X_\alpha$ is not of Carnot type and $\alpha$ is diagonalizable.
\item \cite{XiangdongXie12,NageswariXie12,XiangdongXie14}, the conjecture holds when $N\simeq \R^n$ is abelian. 
\item \cite{XiangdongXie14c}, the conjecture holds when $N\simeq H_{2n+1}$ is the real Heisenberg group of dimension $2n+1$ and $\alpha$ is diagonalizable. Also in \cite{XiangdongXie14b}, the conjecture is shown to be true for a non-diagonalizable derivation when $n=1$.
\end{itemize}
The idea behind the proofs is similar in all the three cases. It consists in finding a quasi-isometry invariant foliation on $\partial X_\alpha$ which is singular at the point $\infty$. The leaves of this foliation are the accessibility classes of points by rectifiable curves, with respect to an appropriate visual metric on the boundary. To this end, Pansu consider $L^p$-cohomology, and Xie consider the $p$-variation of functions on the boundary.

\medskip
\noindent Following an original idea of Romain Tessera, and Pansu's methods, we propose here an approach based on the theory of Orlicz spaces \cite{Rao-Ren91}. This allows us to extend these results to all Heintze groups which are not of Carnot type. We mention that Orlicz spaces based cohomologies have been considered recently also in \cite{Kopylov13,Kopylov-Panenko13}.

\medskip
\noindent The next theorem is our main result. Let $\mu_1$ be the smallest eigenvalue of $\alpha$, and let $H_1$ be the closed connected subgroup of $N$ whose Lie algebra is spanned by the $\mu_1$-eigenvectors belonging to the $\mu_1$-Jordan blocks of maximal size. It is a non-trivial and proper subgroup of $N$ when $X_\alpha$ is not of Carnot type, see Section \ref{resultsHeintze} for more details.

\begin{theorem}\label{fixpoint}
Let $X_\alpha$ be a purely real Heintze group that is not of Carnot type. The boundary homeomorphism of any self quasi-isometry of $X_\alpha$ fixes the special boundary point $\infty$. Furthermore, in this case, it preserves the left cosets of the subgroup $H_1$.
\end{theorem}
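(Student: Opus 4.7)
The plan is to adapt Pansu's strategy for $L^p$-cohomology, replacing $L^p$ with an Orlicz space whose Young function $\Psi$ is carefully tuned to the Jordan block structure of $\alpha$. By the quasi-isometry invariance of Orlicz cohomology (announced in the abstract) combined with the identification of degree-one cohomology with an Orlicz-Besov function space on $\partial X_\alpha$, each admissible $\Psi$ produces a QI-invariant function space $B_\Psi$ on $\partial X_\alpha\setminus\{\infty\}\cong N$, with $N$ equipped with a suitable visual metric. The whole argument then reduces to exhibiting a single $\Psi$ for which $B_\Psi$ ``sees'' exactly the foliation by left cosets of $H_1$.

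Concretely, the Young function should be chosen near the critical exponent $p=Q/\mu_1$, where $Q=\mathrm{tr}(\alpha)$ is the homogeneous dimension. In the diagonalizable non-Carnot case handled by Pansu the pure power $\Psi(t)=t^p$ already detects the $\mu_1$-eigenspace, but in the general Jordan-block setting one needs a logarithmic correction (something like $\Psi(t)\sim t^p(\log t)^q$) to absorb the polynomial distortion produced by the nilpotent part of $\mathrm{Exp}(-t\alpha)$. I would then carry out a capacity/modulus estimate along the lines of the proof of the boundary identification theorem: the $\Psi$-modulus of a family of curves in $N$ is positive if and only if the curves are tangent to the Lie algebra of $H_1$, i.e.\ to the $\mu_1$-eigenvectors sitting in the maximal Jordan blocks, which are precisely the slowest directions under the contracting action $\tau(t)$. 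Consequently, the accessibility classes of points in $N$ by finite-$\Psi$-energy curves are exactly the left cosets $xH_1$, and this foliation is a quasi-isometry invariant.

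To deduce that $\infty$ is fixed, I would observe that the foliation by $H_1$-cosets does not extend continuously across $\infty$: every coset $xH_1$ accumulates at $\infty$ and nowhere else on the boundary sphere, so $\infty$ is the unique singular point of the foliation. The boundary homeomorphism induced by any self quasi-isometry must permute the leaves of a QI-invariant foliation, hence must fix its unique singular point; it therefore fixes $\infty$ and permutes the left cosets of $H_1$, which is the second assertion. The non-Carnot assumption enters exactly here: it guarantees that $H_1$ is a proper non-trivial subgroup of $N$, so the foliation carries non-trivial information and $\infty$ is genuinely distinguished (in the Carnot case $H_1=N$ and the foliation degenerates to a single leaf).

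The main obstacle will be the fine capacity computation that pins down $H_1$ as exactly the accessibility class, rather than a larger or smaller subgroup. One must select a Young function delicate enough to distinguish, among the $\mu_1$-eigenvectors, those belonging to maximal-size Jordan blocks from those in smaller blocks; this is precisely the reason an Orlicz space (as opposed to a standard $L^p$) is needed, since the distinction is governed by logarithmic rather than polynomial weights, and the flexibility of Young functions allows one to match these weights exactly. A secondary point requiring care is verifying that the critical $\Psi$ is admissible for both the QI-invariance statement and the Orlicz-Besov boundary identification, so that the foliation one constructs really is transported by every self quasi-isometry of $X_\alpha$.
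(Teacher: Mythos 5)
Your high-level strategy (Orlicz spaces tuned to the Jordan block structure of $\alpha$, isolating the left cosets of $H_1$ as a quasi-isometry invariant structure on the parabolic boundary) is in the right spirit, but there are two substantive gaps relative to what the paper actually does, and the second one is serious.

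First, the mechanism by which the paper identifies $H_1$-cosets is \emph{not} a capacity/modulus estimate for curve families. The paper computes the local Orlicz cohomology, and hence the local Orlicz--Besov algebra, by purely Lie-theoretic means: a singular-value analysis of $\mathrm{Exp}(tJ)$ (Lemma \ref{bon}), a derivative bound showing that $X^{ij}_k(u_\infty)=0$ for $(p,\kappa)$ below a threshold depending on the Jordan data (Lemma \ref{critical}), and a converse computation (Lemma \ref{nonconstant}) of the exact $(p,\kappa)$ range for which a smooth function on $N/H_1$ or $N/K_i$ is the hororadial limit of an element with locally finite Orlicz energy. Nothing in the argument uses moduli of curves, and in fact the appearance of the logarithmic weight $\phi_{p,\kappa}(t)=|t|^p/\log(e+|t|^{-1})^\kappa$ is read off from these Lie-theoretic estimates, not from a capacity computation. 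Your instinct that the logarithmic correction is forced by the nilpotent part of $\mathrm{Exp}(-t\alpha)$ is correct, but the route you propose to verify it is different from the paper's and you would have to build it from scratch.

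Second, and more importantly, the ``unique singular point of the foliation'' argument for fixing $\infty$ is circular as stated. The function space $B_\Psi$ on $\partial X_\alpha\setminus\{\infty\}$, and hence the putative foliation, is defined using the parabolic visual metric at $\infty$; it is a quasi-isometry invariant of the \emph{pair} $(X_\alpha,\infty)$, not of $X_\alpha$ alone. Asserting that the boundary homeomorphism ``must fix the unique singular point of a QI-invariant foliation'' presupposes that you have a foliation on the whole sphere whose singular locus is canonically defined; you do not, because the construction depends on the choice of $\infty$. What the paper does instead is to introduce the localization at an arbitrary boundary point $\xi$, prove that $\ell^\phi_{\mathrm{loc}}H^1(X,\xi)$ is a quasi-isometry invariant of the pair $(X,\xi)$, and then \emph{compare} the local critical exponents at $\infty$ and at $\xi\in N$. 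Theorems \ref{localclassesinfty} and \ref{localclassesN} show, in the non-Carnot case, that $p_{\neq 0}(X_\alpha,\infty,I) < p_{\neq 0}(X_\alpha,\xi,I)$ for every $\xi\in N$; since a self quasi-isometry sending $\infty\mapsto\eta\neq\infty$ would send some $\zeta\in N$ to $\infty$ and hence force equality of these invariants, $\infty$ must be fixed. Your proposal never produces such an invariant attached to a general boundary point $\xi\in N$, so it has no way to rule out the possibility that the boundary map moves $\infty$. This is the missing idea, and it is exactly where the paper's localization machinery earns its keep.
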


\noindent Another important ingredient in our approach is a localization technique. Let us motivate it by considering the following examples. Let $N=\R^2$ and consider the Heintze group $X_i:=X_{\alpha_i}$, $i=1,2,3$, where 
$$\mathrm{(i)}\ \alpha_1=\left(\begin{tabular}{c c} 1 & 0 \\ 0 & 1 \end{tabular}\right)\ \mathrm{(ii)}\ \alpha_2=\left(\begin{tabular}{c c} 1 & 0 \\ 0 & $\mu$ \end{tabular}\right)\ \mathrm{(iii)}\ \alpha_3=\left(\begin{tabular}{c c} 1 & 1 \\ 0 & 1 \end{tabular}\right),$$
and $\mu>1$. Notice that $X_1$ is isometric to the real hyperbolic space $\mathbb{H}^3$. The degree one $L^p$-cohomology of $X_i$ can be identified with a Besov space on the boundary \cite{PierrePansu89,Bourdon-Pajot04}. Let us consider the quasi-isometry invariant Banach algebra of continuous Besov functions $A^p(\partial X_i)$. The dependence on $p$ of this algebra is summarized in Figure \ref{fig:global}.

\begin{figure}[h]
\centering
\setlength{\unitlength}{1cm}
\begin{picture}(14.5,2.7)
\includegraphics[width=14.5cm]{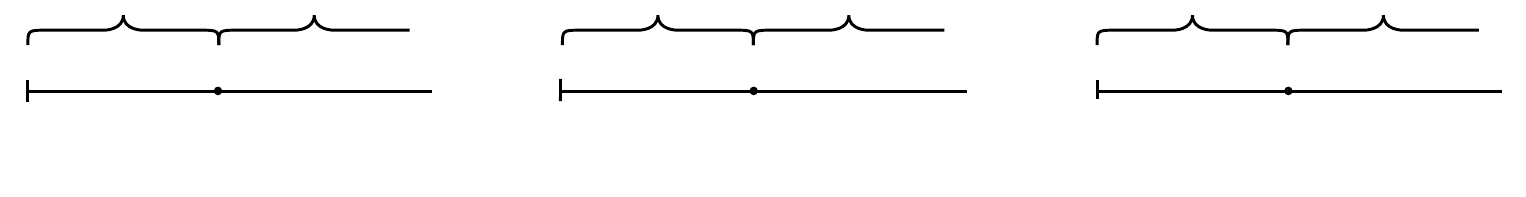}
\put(-14.3,0.6){\footnotesize $1$}
\put(-9.23,0.6){\footnotesize $1$}
\put(-4.12,0.6){\footnotesize $1$}
\put(-14.2,2){\footnotesize $A^p$ is trivial}
\put(-12.25,2.4){\footnotesize $A^p$ separates}
\put(-11.8,2){\footnotesize points}
\put(-9.13,2){\footnotesize $A^p$ is trivial}
\put(-7.2,2.4){\footnotesize $A^p$ separates}
\put(-6.7,2){\footnotesize points}
\put(-4.07,2){\footnotesize $A^p$ is trivial}
\put(-2.2,2.4){\footnotesize $A^p$ separates}
\put(-1.7,2){\footnotesize points}
\put(-12.8,0.6){\footnotesize $p=2$}
\put(-2.6,0.6){\footnotesize $p=2$}
\put(-7.7,0.6){\footnotesize $p=1+\mu$}
\put(-12.6,0){\footnotesize (i)}
\put(-7.55,0){\footnotesize (ii)}
\put(-2.5,0){\footnotesize (iii)}
\end{picture}
\caption{Dependence on $p$ of $A_p(\partial X_i)$.\label{fig:global}}
\end{figure}

\medskip
\noindent One way to isolate the point $\infty$ is suggested by the following construction which appears in \cite{VladimirShchur14}. Let $Z_i$ be the \emph{Heintze cone} defined as the quotient space of $X_i$ by the discrete group of translations $\Z^2$. The boundary $\partial Z_i$ is the union of a torus $T_i$ and the isolated point $\infty$. The dependence on $p$ of $A^p(\partial Z_i)$ is summarized in Figure \ref{fig:local}. One would like to define a similar cone with respect to any other point $\xi\in \partial X_i$.

\begin{figure}
\centering
\setlength{\unitlength}{1cm}
\begin{picture}(12,3.1)
\includegraphics[width=12cm]{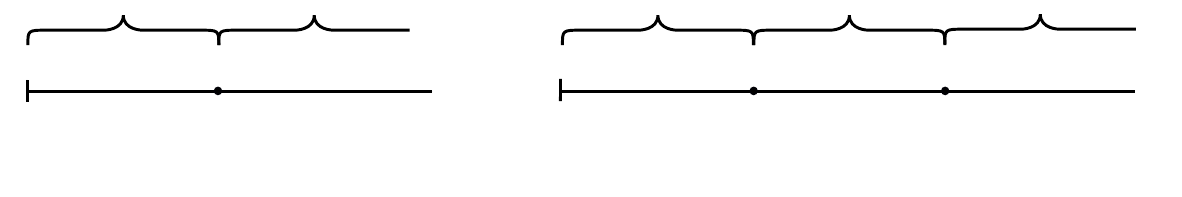}
\put(-11.8,0.6){\footnotesize $1$}
\put(-6.4,0.6){\footnotesize $1$}
\put(-11.65,2){\footnotesize $A^p$ is trivial}
\put(-9.7,2.4){\footnotesize $A^p$ separates}
\put(-9.2,2){\footnotesize points}
\put(-6.27,2){\footnotesize $A^p$ is trivial}
\put(-2.3,2.4){\footnotesize $A^p$ separates}
\put(-1.7,2){\footnotesize points}
\put(-4.13,2){\footnotesize coordinate}
\put(-4.16,2.4){\footnotesize only on one}
\put(-4.18,2.8){\footnotesize $A^p$ depends}
\put(-10.17,0.6){\footnotesize $p=2$}
\put(-2.74,0.6){\footnotesize $p=1+\mu$}
\put(-4.7,0.6){\footnotesize $p=\frac{1+\mu}{\mu}$}
\put(-10.5,0){\footnotesize (i) and (iii)}
\put(-3.55,0){\footnotesize (ii)}
\end{picture}
\caption{Dependence on $p$ of $A_p(\partial Z_i)$.\label{fig:local}}
\end{figure}

\medskip
\noindent The pull-back of a function $u\in A^p(\partial Z_i)$ by the projection map is periodic and does not define an element of $A^p(\partial X_i)$. Nevertheless, it satisfies a local integrability condition, i.e. it belongs to the Fr\'echet algebra $A^p_\mathrm{loc}(\partial X_i\setminus \{\infty\})$. The key point is that the dependence on $p$ of the algebras $A^p_\mathrm{loc}(\partial X_i\setminus \{\xi\})$ coincides with that summarized in Figure \ref{fig:local} when $\xi=\infty$, and with that in Figure \ref{fig:global} when $\xi\in\R^2$. This explains the case (ii).

\medskip
\noindent Let us look at (iii) in more detail. The parabolic visual metric on $\partial X_3\setminus\{\infty\}=\R^2$ is bi-Lipschitz equivalent to the function
$$\varrho\left((x_1,y_1);(x_2,y_2)\right)=\max\left\{|y_2-y_1|,\left|(x_2-x_1)-(y_2-y_1)\log|y_2-y_1|\right|\right\}.$$
From this expression, we see that the projection functions, $\pi_1(x,y)=x$ and $\pi_2(x,y)=y$, have different regularity properties with respect $\varrho$. That is, $\pi_2$ is Lipschitz, while $\pi_1$ satisfies the inequality
\begin{equation}\label{log}
\left|\pi_1(v)-\pi_1(w)\right|\lesssim \varrho(v,w)\log\left(\frac{1}{\varrho(v,w)}\right).
\end{equation}
In particular, $\pi_1$ is $\alpha$-H\"older for any exponent $\alpha\in (0,1)$. Therefore, both projections belong to $A^{2/\alpha}_{\mathrm{loc}}(\partial X_3\setminus \{\infty\})$ for any $\alpha<1$, which explains Figure \ref{fig:local}.(iii). In other words, the $L^p$-cohomology in degree one is not sensible to the logarithmic term appearing in (\ref{log}). As we will show later, a well chosen Orlicz-Besov space detects the difference between $\pi_1$ and $\pi_2$, and we are able to distinguish $\infty$ in case (iii) also. 

\medskip
\noindent The quasi-isometry invariance of Orlicz-Besov spaces is not evident at first sight, we show it for a class of Young functions which is sufficient for our purposes in Sections \ref{qiinvariance} and \ref{boundary}. We deal with localization in Section \ref{localization}.

\medskip
\noindent As a by product, finer results regarding $\mathrm{QIsom}(X_\alpha)$ are obtained. Let us outline them.

\subsection{On the Orlicz cohomology of a hyperbolic complex and its localization} 

Let $X$ be a finite dimensional simplicial complex with bounded geometry. That is, there exists a constant $\sf{N}$ such that any vertex of $X$ is contained in at most $\sf{N}$ simplices. Suppose $X$ is equipped with a geodesic distance making each of its simplices isometric to a standard Euclidean simplex. We further assume that $X$ is uniformly contractible: it is contractible and any ball $B(x,r)$ in $X$ is contractible in the ball $B(x,r')$, for some $r'\geq r$ which depends only on $r$. In the hyperbolic case, this condition is not restrictive.

\medskip
\noindent By a Young function we mean an even convex function $\phi:\R\to \R_+$, with $\phi(0)=0$ and $\lim_{t\to\infty}\phi(t)=+\infty$. For such a function, we introduce here the Orlicz cohomology, denoted by $\ell^\phi H^\bullet(X)$, of the complex $X$. It consists on a direct generalization of the ordinary $\ell^p$-cohomology introduced in \cite{Bourdon-Pajot04,MikhailGromov93,PierrePansu89}, where $\phi(t)=\phi_p(t):=|t|^p$. As in the ordinary case, we show that it is a quasi-isometry invariant of $X$.

\medskip
\noindent Suppose that $X$ is in addition a quasi-starlike Gromov-hyperbolic space. Recall that quasi-starlike means that any point of $X$ is at uniformly bounded distance from some geodesic ray. Inspired by the works of Pansu \cite{PierrePansu89,PierrePansu02,PierrePansu08}, and Bourdon-Pajot \cite{Bourdon-Pajot04}, we identify the degree one Orlicz cohomology of $X$ with an Orlicz-Besov functional space on its boundary $\partial X$. To this end, we need to assume a decay condition on the Young function.

\begin{definition}\label{growth}
Let $\phi$ be a Young function. We say that $\phi$ is \emph{doubling}, if there exist $t_0>0$ and $\sf{K}\geq 2$ such that $\phi(2t)\leq \sf{K}\phi(t)\ \text{ for all } t\in[0,t_0]$.
\end{definition}

\noindent The doubling condition in the above definition is known in the literature as the $\Delta_2(0)$ condition. It admits several equivalent formulations, see for example \cite[Thm.\,3 Ch.\,2]{Rao-Ren91}.

\medskip 
\noindent Moreover, suppose that the boundary at infinity $\partial X$ admits a visual metric $\varrho$ which is Ahlfors regular of dimension $Q>0$. That is, the $Q$-dimensional Hausdorff measure $H$ of a ball of radius $r\leq \diam\,\partial X$ is comparable to $r^Q$. We refer the reader to \cite{Bridson-Haefliger99,Ghys-delaHarpe88,Heinonen01} for basic background.

\medskip
\noindent In the space of pairs $\partial^2 X$, consider the measure 
$$d\lambda(\xi,\zeta)=\frac{dH\otimes dH(\xi,\zeta)}{\varrho(\xi,\zeta)^{2Q}}.$$
If $u:\partial X\to\R$ is a measurable function, we define its Orlicz-Besov $\phi$-norm as
\begin{equation}\label{besovnorm}\langle u\rangle_\phi:=\inf\left\{\alpha>0:\int_{\partial^2X}\phi\left(\frac{u(\xi)-u(\zeta)}{\alpha}\right)d\lambda(\xi,\zeta)\leq 1\right\}.\end{equation}
Then, the Orlicz-Besov space is by definition
\begin{equation}\label{orliczbesov}
B^\phi\left(\partial X,\varrho\right):=\left\{u:\partial X\to \R\text{ measurable}: \langle u\rangle_\phi<\infty\right\}.
\end{equation}
We denote by $\R$ the functions on $\partial X$ which are constant $H$-almost everywhere. Then the space $B^\phi\left(\partial X,\varrho\right)/\R$ when equipped with the norm $\langle \cdot\rangle_\phi$ is a Banach space.

\begin{theorem}\label{thetracemap} Let $\phi$ be a doubling Young function, and $\varrho$ be an Ahlfors regular visual metric on $\partial X$. There exists a canonical isomorphism of Banach spaces between $\ell^\phi H^1(X)$ and $B^\phi(\partial X,\varrho)/\R$. In particular, $\ell^\phi H^1(X)$ is reduced.
\end{theorem}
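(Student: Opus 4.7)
The plan is to prove Theorem \ref{thetracemap} along the lines of Pansu's and Bourdon-Pajot's proof for $L^p$-cohomology, but recording carefully where the doubling ($\Delta_2(0)$) hypothesis on $\phi$ enters, since the Luxemburg norm replaces the clean homogeneity of the $p$-th power.

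\medskip
\noindent\textbf{Construction of a trace map.} First I would fix a basepoint $o\in X$ and, using quasi-starlikeness together with bounded geometry, choose for each $\xi\in\partial X$ a quasi-geodesic ray $\gamma_\xi$ from $o$ converging to $\xi$. Given a $0$-cochain $f$ with $df\in \ell^\phi$, I would define its trace $\mathrm{Tr}(f)(\xi)=\lim_{n\to\infty} f(\gamma_\xi(n))$. Convergence $H$-a.e. should come from the usual shadow / horoball estimate: summing $df$ along $\gamma_\xi$ and using the Ahlfors regularity of $\varrho$, a maximal function argument shows that the set of $\xi$ where the telescoping series diverges has vanishing $H$-measure. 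At this stage one needs only the $\Delta_2(0)$ condition to turn an $\ell^\phi$ bound into a maximal-function bound via Young's inequality in its Orlicz form.

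\medskip
\noindent\textbf{From $\ell^\phi$-coboundaries to Orlicz--Besov norms.} Next I would estimate $\langle \mathrm{Tr}(f)\rangle_\phi$ by $\|df\|_\phi$. For two points $\xi,\zeta\in\partial X$, I connect $\gamma_\xi$ and $\gamma_\zeta$ through a bi-infinite geodesic whose "waist" is essentially the Gromov product $(\xi|\zeta)$. The difference $\mathrm{Tr}(f)(\xi)-\mathrm{Tr}(f)(\zeta)$ is then controlled by a sum of $|df|$ along this path. Integrating against $d\lambda(\xi,\zeta)=\varrho(\xi,\zeta)^{-2Q}\, dH\otimes dH$ and changing variables so that each edge $e\subset X$ is charged by the measure of the pair of shadows it separates, Ahlfors regularity of the boundary measure gives that this charge is uniformly bounded. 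The doubling condition is used precisely here: to pass from a pointwise bound $\phi\!\left(\sum_e c_e|df(e)|\right)\leq \sum_e c_e \phi(|df(e)|/c_e^{-1}\cdot\ldots)$ to a modular bound, one needs Jensen's inequality for $\phi$ together with a $\Delta_2(0)$-type estimate to reabsorb the weights; this yields $\langle \mathrm{Tr}(f)\rangle_\phi\lesssim \|df\|_\phi$.

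\medskip
\noindent\textbf{Inverse via extension by shadow averages.} Conversely, given $u\in B^\phi(\partial X,\varrho)$, I would build an extension $f$ on $X$ by setting $f(x)$ to be the $H$-average of $u$ over the shadow $\mathcal{O}(x)$ of a ball around $x$. Standard hyperbolic geometry gives $|df(e)|\lesssim \dashint_{\mathcal{O}(x)}\dashint_{\mathcal{O}(y)}|u(\xi)-u(\zeta)|\,dH(\xi)dH(\zeta)$ for an edge $e=[x,y]$. Using Jensen for $\phi$ together with Ahlfors regularity and $\Delta_2(0)$ to absorb the normalization constants, summing in $\phi$-modular form reverses the estimate of the previous paragraph: $\|df\|_\phi\lesssim \langle u\rangle_\phi$. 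One then verifies $\mathrm{Tr}(f)=u$ almost everywhere through a Lebesgue differentiation argument on $(\partial X,\varrho,H)$, which is available since the measure is Ahlfors regular (and hence doubling).

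\medskip
\noindent\textbf{Isomorphism and reducedness.} The two maps pass to the quotients: a coboundary $f=dg$ with $g\in\ell^\phi$ has trace that is constant $H$-a.e., by the same convergence argument applied to $g$ along geodesic rays; and two extensions of the same boundary datum differ by a coboundary of an $\ell^\phi$ function, again by averaging. This shows $\mathrm{Tr}$ descends to a bounded bijection $\ell^\phi H^1(X)\to B^\phi(\partial X,\varrho)/\R$ with bounded inverse, hence an isomorphism of Banach spaces. In particular, since $B^\phi(\partial X,\varrho)/\R$ is Banach (as stated in the excerpt), its preimage under the quotient map, the space of $\ell^\phi$-coboundaries, is closed in $\ell^\phi$-cocycles, i.e. $\ell^\phi H^1(X)$ is reduced. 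I expect the main technical obstacle to be the second step: the telescoping / shadow estimate must be carried out in modular form because $\phi$ is not homogeneous, and making the constants in the Jensen-type step uniform for all scales requires careful use of $\Delta_2(0)$ near $0$, exactly matching the definition of a doubling Young function given in Definition \ref{growth}.
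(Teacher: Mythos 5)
Your plan follows the same overall strategy as the paper (radial trace, shadow--average extension, two-sided boundedness, passage to quotients), so the architecture is correct; but you miss the one technical device that makes the Orlicz generalization work cleanly, and you gloss over the kernel characterization.

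The paper never performs a weighted Jensen estimate along a bi-infinite geodesic, precisely because, as you yourself anticipate, ``reabsorbing the weights'' is where the non-homogeneity of the Luxemburg norm bites. Instead, it introduces the product space $\partial X\times\N$ with measure $H\otimes\mu_v$, $\mu_v(\{r\})=v^r$, and the radial shift $S(\xi,r)=(\xi,r+1)$. Lemma \ref{contraction} shows, using Lemma \ref{lowerdoubling} (which gives $x^{\sf{K}}\phi(y)\leq\phi(xy)$ for $x\in[0,1]$ directly from $\Delta_2(0)$), that $S^*$ is a contraction on $L^\phi(\partial X\times\N)$ of norm $\leq v^{-1/\sf{K}}$. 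The operator $T=\sum_{k\geq 0}(S^*)^k$ is then bounded by a convergent geometric series, and since the tail function $F(\xi,r)=\sum_{k\geq r}df(\theta_\xi[k])$ satisfies $F=T(DF)$, one gets the Strichartz-type inequality $\|F\|_\phi\lesssim\|DF\|_\phi\lesssim\|df\|_\phi$ with no weights to reabsorb. This single inequality simultaneously gives almost-everywhere existence of the radial limit (no maximal function is needed---once $F\in L^\phi$, the integrand is finite a.e., forcing $F(\xi,r)\to 0$), the $L^\phi$ bound on $f_\infty$, the three-term decomposition estimate in Proposition \ref{image} for $\langle f_\infty\rangle_\phi\lesssim\|df\|_\phi$, and the comparison $\|f\|_\phi\asymp\|\tilde F\|_\phi$ in Lemma \ref{bigF}. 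Without it your step two stalls exactly where you flagged the obstacle, and there is no clear substitute argument.

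Second, your treatment of the kernel is too thin. The easy direction (that $f\in\ell^\phi(X_0)$ forces $f_\infty\equiv 0$) is immediate, but the converse---if $f_\infty=0$ a.e.\ and $df\in\ell^\phi(X_1)$ then $f\in\ell^\phi(X_0)$---is Lemma \ref{bigF} and genuinely requires work: one compares $f$ with the shadow-average $\tilde F(\xi,r)=m_{\xi,r}^{-1}\sum_{x\in\Sigma_r}f(x)\I_{\mho(x)}(\xi)$, controls $\|\tilde F-F\|_\phi$ by $\|df\|_\phi$ via the bounded-overlap property of shadows, and then uses the finiteness of $\|F\|_\phi$ (again the Strichartz bound) to conclude $f\in\ell^\phi(X_0)$. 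Your phrase ``two extensions of the same boundary datum differ by a coboundary of an $\ell^\phi$ function, again by averaging'' is the right slogan but covers precisely this lemma, which is not a triviality. Also note that in the degree-one setup the objects $f$ are $0$-cochains with $\ell^\phi$-integrable derivative, so the phrase ``a coboundary $f=dg$'' is a degree mismatch; the coboundaries being quotiented out are the functions $g\in\ell^\phi(X_0)$ themselves (together with constants). The final reducedness argument you give (Hausdorff target implies closed image of $\delta_0$) is correct.
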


\medskip
\noindent Notice that by \cite[Prop. 2.1]{Bourdon-Pajot04}, any Ahlfors regular compact metric space $Z$ is bi-Lipschitz homeomorphic to $\partial X$ for some geometric hyperbolic complex $X$ as above, and the quasi-isometry class of $X$ depends only on the quasi-symmetry class of $Z$. In particular, Orlicz-Besov spaces are quasisymmetry invariants of $Z$. When the metric space $Z$ is the Euclidean $n$-sphere, and $\phi=\phi_p$, this Orlicz-Besov space coincides with the classical Besov space $B^{n/p}_{p,p}$ \cite{HansTriebel83}.

\medskip
\noindent In order to quantify the influence of a point $\xi\in\partial X$ to the nullity of the cohomology spaces, we introduce a local version of the $\ell^\phi$-cohomology. This provides us with an interesting tool capable to distinguish local features of the quasiconformal geometry of the boundary.

\medskip
\noindent Given a quasi-isometric embedding $\iota:Y\to X$, where $Y$ is another hyperbolic simplicial complex with bounded geometry, one can define the pull-back of cochains $\iota^*$, see for example \cite{Bourdon-Pajot04}. For $\xi\in \partial X$, denote by $\mathcal{Y}(X,\xi)$ the collection of all such quasi-isometric embeddings with $\iota(\partial Y)\subset \partial_\xi X$. We define the space of locally $\phi$-integrable $k$-cochains of $X$ (with respect to $\xi$) as
\begin{align*}\Lp^\phi_{\mathrm{loc}}(X_k,\xi):=&\big\{\tau:X_k\to\R:\iota^*(\tau)\in \Lp^\phi(Y_k),\forall\,\iota\in\mathcal{Y}(X,\xi)\big\}.
\end{align*}
Here $X_k$ and $Y_k$ denote the set of $k$-simplices of $X$ and $Y$ respectively. We emphasize the fact that the cochains are defined globally, but the integrability condition is ``local''. As we will show in Section \ref{localization}, $\Lp^\phi_{\mathrm{loc}}(X_k,\xi)$ is a Fr\'echet space and the coboundary operators
$$\delta_k:\Lp^\phi_{\mathrm{loc}}(X_k,\xi)\to \Lp^\phi_{\mathrm{loc}}(X_{k+1},\xi)$$
are Lipschitz continuous.

\begin{definition}\label{deflocal}
Consider a point $\xi\in \partial X$. We define the local $\Lp^\phi$-cohomology of $X$ with respect to $\xi$ as
$$\Lp^\phi_{\mathrm{loc}} H^k(X,\xi):=\mathrm{ker}\,\delta_{k}/\mathrm{im}\,\delta_{k-1}.$$
The reduced local $\Lp^\phi$-cohomology is defined as usual taking the quotient by $\overline{\mathrm{im}\,\delta_{k-1}}$.
\end{definition}

\noindent Notice that by definition, the local $\Lp^\phi$-cohomology is a quasi-isometry invariant of the pair $(X,\xi)$. More precisely, if $F:X\to X'$ is a quasi-isometry between two hyperbolic simplicial complexes as above, then $\Lp^\phi_{\mathrm{loc}}(X,\xi)$ is isomorphic as a topological vector space to $\Lp^\phi_{\mathrm{loc}}(X',F(\xi))$. We denote also by $F$ the boundary extension.

\medskip
\noindent As for the global cohomology, we can identify the local Orlicz cohomology in degree one with a local Orlicz-Besov space on the parabolic boundary $\partial_\xi X:=\partial X\setminus\{\xi\}$. We suppose that $\partial_\xi X$ is equipped with an Ahlfors regular parabolic visual metric $\varrho_\xi$. We define the local Orlicz-Besov space on $\partial_\xi X$ as 
$$B^\phi_{\mathrm{loc}}\left(\partial_\xi X,\varrho_\xi\right):=\big{\{}u:\partial_\xi X\to\R:\ \langle u \rangle_{\phi,\xi,K}<\infty,\forall\text{ compact } K\subset\partial_\xi X\big{\}},$$
where $\langle \cdot \rangle_{\phi,\xi,K}$ is the seminorm defined as in (\ref{besovnorm}), but replacing $\varrho$ by $\varrho_\xi$ and integrating over the compact $K$.

\begin{theorem}\label{thelocaltracemap} Let $\phi$ be a doubling Young function, and $\varrho_\xi$ be an Ahlfors regular parabolic visual metric on $\partial_\xi X$. There exists a canonical isomorphism of Fr\'echet spaces between $\ell^\phi_{\mathrm{loc}} H^1(X,\xi)$ and $B^\phi_{\mathrm{loc}}(\partial_\xi X,\varrho_\xi)/\R$. In particular, $\ell^\phi_{\mathrm{loc}} H^1(X,\xi)$ is reduced.
\end{theorem}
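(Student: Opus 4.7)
The plan is to bootstrap Theorem \ref{thelocaltracemap} from Theorem \ref{thetracemap} using the very definition of $\Lp^\phi_{\mathrm{loc}}(X_k,\xi)$ via pull-backs along quasi-isometric embeddings $\iota\in\mathcal{Y}(X,\xi)$. The geometric input needed is an exhaustion of $\partial_\xi X$ by connected compacts $K_n$, each realized as $(\iota_n)_*(\partial Y_n)$ for a quasi-starlike hyperbolic subcomplex $Y_n\subset X$ of bounded geometry, obtained by thickening the union of geodesic rays from a basepoint to points in a neighborhood of $K_n$. The inclusion $\iota_n$ then lies in $\mathcal{Y}(X,\xi)$, and $(\iota_n)_*\colon \partial Y_n\to K_n$ is quasisymmetric for the parabolic visual metric $\varrho_\xi$. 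A thin-triangle argument shows that any $\iota\in\mathcal{Y}(X,\xi)$ factors up to bounded error through some $\iota_n$, so the two families of seminorms $\{\|\iota^*\cdot\|_\phi\}$ and $\{\|\iota_n^*\cdot\|_\phi\}$ define the same Fr\'echet topology on $\Lp^\phi_{\mathrm{loc}}(X_0,\xi)$.

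Construction of the trace map: given a cocycle $f\in\ker\delta_0\subset\Lp^\phi_{\mathrm{loc}}(X_0,\xi)$, I apply Theorem \ref{thetracemap} to each pull-back $\iota_n^*f\in\Lp^\phi(Y_{n,0})$ to obtain a class $[u_n]\in B^\phi(\partial Y_n)/\R$, and transport it to $K_n$ via $(\iota_n)_*$. Naturality of the global trace under the quasi-isometric embedding $Y_n\hookrightarrow Y_{n+1}$ forces $u_{n+1}|_{K_n}=u_n+c_n$ for a constant $c_n\in\R$; since each $K_n$ is connected, the $c_n$ are absorbed iteratively to produce a single class $[u]\in B^\phi_{\mathrm{loc}}(\partial_\xi X,\varrho_\xi)/\R$. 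The local seminorm $\langle u\rangle_{\phi,\xi,K_n}$ is controlled by $\|\iota_n^*f\|_\phi$ via the quantitative form of Theorem \ref{thetracemap}, so the map is continuous, and it descends to cohomology because coboundaries pull back to coboundaries whose global trace is a constant. For injectivity, if $[u]=0$ then each $\iota_n^*f$ is the coboundary of some $g_n\in\Lp^\phi(Y_{n,0})$, and on overlaps the differences $g_{n+1}-g_n$ are locally constant cochains in $\ker\delta_0$, hence true constants that can be absorbed to give a global $g\in\Lp^\phi_{\mathrm{loc}}(X_0,\xi)$ with $\delta_0 g=f$. Surjectivity mirrors this construction starting from $u\in B^\phi_{\mathrm{loc}}(\partial_\xi X,\varrho_\xi)$ and invoking the surjective part of Theorem \ref{thetracemap} on each $Y_n$. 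The closed graph theorem then upgrades the algebraic bijection to a Fr\'echet-space isomorphism, although the explicit seminorm comparisons already give continuity in both directions.

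The main obstacles I expect are (i) proving the cofinality of the family $\{\iota_n\}$ in $\mathcal{Y}(X,\xi)$, i.e.\ that every quasi-isometric embedding whose boundary avoids $\xi$ essentially factors through one of the $Y_n$; this relies on Morse stability of quasi-geodesics in hyperbolic spaces combined with the uniform ``escape to $\xi$'' of rays converging there, and it is precisely what ties the abstract pull-back definition of $\Lp^\phi_{\mathrm{loc}}$ to the compact-indexed Fr\'echet structure on the boundary side; and (ii) the iterative constant-matching in the gluing of the $u_n$ and $g_n$, which is elementary as a \v{C}ech computation on a nested connected exhaustion but must be performed so that the resulting class is independent of the choice of exhaustion. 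Both issues are resolved by arranging the $K_n$ with sufficient overlap and exploiting the naturality of the global trace that is built into Theorem \ref{thetracemap}.
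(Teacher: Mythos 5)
Your approach is genuinely different from the paper's, and there is a real gap in it. The paper does not apply Theorem~\ref{thetracemap} to each subcomplex $Y_n$: it re-runs the degree-one argument \emph{directly} on $X$ in horospherical coordinates. Concretely, Section~\ref{horoshift} introduces the hororadial shift $S:\partial_\xi X\times\Z\to\partial_\xi X\times\Z$, proves the analogue of Lemma~\ref{contraction} and the Strichartz-type bound $\|F\|_{\phi,K_n\times\Z_n}\lesssim\|DF\|_{\phi,K_n\times\Z_n}\lesssim\|df\|_{\phi,Y_n}$, establishes the hororadial shadow estimate (\ref{invdistance2}) replacing (\ref{invdistance}), and then adapts Propositions~\ref{nucleo} and~\ref{image} verbatim. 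Every estimate is taken with respect to the single parabolic metric $\varrho_\xi$ and its Hausdorff measure $H_r$ on all of $\partial_\xi X$, integrated over compacts $K_n$. Your proposal, by contrast, treats each $Y_n$ as a self-contained hyperbolic complex, invokes the global trace theorem there, and glues the resulting boundary classes $[u_n]\in B^\phi(\partial Y_n)/\R$.

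The gap is the Ahlfors-regularity hypothesis of Theorem~\ref{thetracemap}. Applying it to $Y_n$ requires $\partial Y_n$, equipped with an \emph{intrinsic} visual metric of $Y_n$, to be Ahlfors $Q$-regular. The hypotheses of Theorem~\ref{thelocaltracemap} only guarantee Ahlfors regularity of the full punctured boundary $(\partial_\xi X,\varrho_\xi)$; a compact ball $K_n=\partial Y_n$ inside such a space need not itself be Ahlfors $Q$-regular (the measure of balls centred near $\partial K_n$ can degenerate unless one has some geodesic or linear connectivity of $\partial_\xi X$, which is not assumed). The paper's direct adaptation never faces this issue, since it only integrates over compacts of a space that \emph{is} assumed Ahlfors regular. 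If you wish to salvage the bootstrapping route you would need either an extra lemma that closed balls in an Ahlfors-regular parabolic boundary are themselves Ahlfors regular, or a ``relative'' version of Theorem~\ref{thetracemap} — but the latter is essentially the paper's horospherical rewrite. A secondary, repairable issue is that in your gluing the differences $f^{(n+1)}|_{Y_n}-f^{(n)}$ lie in $\ell^\phi(Y_{n,0})\oplus\R$, not just $\R$, so the telescoping produces correction terms $\sum_{k<m:\,x\in Y_k}g_k(x)$ that must be shown to stay $\ell^\phi$ on each $Y_m$; this works but is more than a \v{C}ech constant-matching. On the positive side, your cofinality step is exactly Lemma~\ref{enoughembeddings}, and the cone construction you sketch matches the paper's.
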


\noindent These identifications are very useful to define several quasi-isometry invariants. Consider a family of doubling Young functions $\{\phi_i:i\in I\}$ indexed on a totally ordered set $I$, and suppose that it is non-decreasing in the sense that if $i\leq j$ in $I$, then $\phi_i\preceq\phi_j$ (see Section \ref{preOrlicz} for the definition of the relation $\preceq$). We define the critical exponent of $I$ as
\begin{equation}\label{exponent}
p_{\neq 0}(X,I):=\inf\left\{i\in I:\ell^{\phi_i}H^1(X)\neq 0\right\}.
\end{equation}
Notice that by Theorem \ref{thetracemap}, there is a canonical inclusion $\ell^{\phi_1}H^1(X)\subset \ell^{\phi_2}H^1(X)$ whenever $\phi_1\preceq \phi_2$. When the family of Young functions is given by $\{\phi_p:p\in[1,+\infty)\}$, this exponent is the well known critical exponent associated to the $\ell_p$-cohomology of $X$. An analogous exponent $p_{\neq 0}(X,\xi,I)$ can be defined for the local Orlicz cohomology.

\medskip
\noindent Finer invariants can be given following the ideas of \cite{MarcBourdon07,Bourdon-Kleiner13,Bourdon-Kleiner12}. Consider the algebra of continuous Orlicz-Besov functions
$$A^\phi=A^\phi\left(\partial X,\varrho_w\right):=\left\{u\in B^\phi\left(\partial X,\varrho_w\right): u\text{ is continuous}\right\}.$$
It is a unital Banach algebra when equipped with the norm $N_\phi(u):=\|u\|_{\infty}+\langle u\rangle_{\phi}$. The spectrum of $A^{\phi}$, denoted by $\mathrm{Sp}\left(A^{\phi}\right)$, is a Hausdorff compact topological space invariant by Banach algebra isomorphisms. In particular, the spectrum, as well as its topological dimension $s(\phi):=\dim_T\mathrm{Sp}\left(A_{\phi}\right)$, are quasi-isometry invariants of $X$. For instance, given an indexed family $\{\phi_i:i\in I\}$ as before, the function $s:I\to\N\cup\{\infty\},\ i\mapsto s(i)=s(\phi_i),$ provides another quasi-isometry invariant of $X$.

\medskip
\noindent The spectrum of $A_\phi$ is a quotient space of $\partial X$, where the equivalence relation is given by
$$\xi\sim_{\phi}\zeta \text{ if, and only if, } u(\xi)=u(\zeta),\ \forall\,u\in A_\phi.$$
The $\ell^\phi$-equivalence classes provide a partition of $\partial X$ which must be preserved by the boundary homeomorphism of any self quasi-isometry of $X$.

\medskip
\noindent The same considerations can be carried out for the local Orlicz cohomology, by considering the unital Fr\'echet algebra $A^\phi_{\mathrm{loc}}\left(\partial_\xi X,\varrho_\xi\right)$ of continuous Orlicz-Besov functions on the parabolic boundary $\partial_\xi X$. In particular, the boundary homeomorphism of a self quasi-isometry of $X$ which fixes the point $\xi$, must preserve the local $\ell^\phi$-cohomology classes of $\partial_\xi X$.

\subsection{On quasi-isometries of Heintze groups}\label{resultsHeintze}

We will focus on the global and local Orlicz cohomology in degree one of $X_\alpha$, for the family of doubling Young functions given by
\begin{equation}\label{ourphi}
\phi_{p,\kappa}(t)=\frac{|t|^p}{\log\left(e+|t|^{-1}\right)^{\kappa}},\ (p,\kappa)\in I=[1,+\infty)\times[0,+\infty).
\end{equation}
In order to simplify the notation, we indicate the Orlicz spaces and the norms associated to the functions $\phi_{p,\kappa}$ with the superscript ``$p,\kappa$''. The set of pairs $(p,\kappa)\in I$ is endowed with the lexicographic order, so we obtain a non-decreasing family of Young functions as in the previous section.

\medskip
\noindent We will define a non-decreasing sequence of closed Lie subgroups of $N$,
$$\{e\}=K_0\leq H_1\leq K_1\leq H_2\leq K_2\leq \cdots H_d\leq K_d=N,$$ 
whose left cosets will be identified with local cohomology classes for appropriate choices of the parameters $p$ and $\kappa$.

\medskip
\noindent Denote by $\mu_1\leq\cdots\leq\mu_d$ the distinct eigenvalues of $\alpha$, and let $\mathcal{B}$ be a basis of $\mathfrak{n}$ on which $\alpha$ assumes its Jordan canonical form. In the basis $\mathcal{B}$, we have the decomposition
\begin{equation}\label{jordanform}
\alpha=\bigoplus_{i=1}^d\bigoplus_{j=1}^{k_i}J(\mu_i,m_{ij}).
\end{equation}
Here $J(\mu_i,m_{ij})$ denotes a Jordan block of size $m_{ij}$ associated to the eigenvalue $\mu_i$. Let $V_i$ be the generalized eigenspace associated with $\mu_i$.

\medskip
\noindent Let $\mathfrak{k}_0=W_0=\{0\}$, and for each $i\in \{1,\ldots,d\}$, let 
$$W_i=\bigoplus_{r=1}^iV_r,\text{ and } \mathfrak{k}_i=\mathrm{LieSpan}\left(W_i\right).$$
That is, $\mathfrak{k}_i$ is the Lie sub-algebra of $\mathfrak{n}$ generated by $W_i$. Define $K_i$ to be the closed Lie subgroup of $N$ whose Lie algebra is $\mathfrak{k}_i$. Note that the set of left cosets $N/K_i$ is a smooth manifold, and the canonical projection is a smooth map.

\medskip
\noindent For each $i\in\{1,\ldots,d\}$, we also let $m_i=\max\{m_{ij}:1\leq j\leq k_i\}$ be the maximal size of the Jordan blocks with eigenvalue $\mu_i$. Consider the $\mu_i$-eigenvectors belonging to the Jordan blocks of size $m_i$, and denote by $V_i^0$ the vector space their span. Finally, consider $\mathfrak{h}_i\leq \mathfrak{k}_i$ the Lie subalgebra spanned by $W_{i-1}\oplus V_i^0$, and $H_i$ its corresponding closed Lie subgroup. Notice that $\mathfrak{h}_i=\mathfrak{k}_i$ only when $m_i=1$.

\medskip
\noindent The parabolic boundary $\partial_\infty X_\alpha$ can be identified with $N$. A left invariant parabolic visual metric $\varrho_\infty$ can be defined on $N$, and so that $\tau$ acts as a dilation. From this, one easily checks that $\varrho_\infty$ is Ahlfors regular (see Section \ref{hnc} for more details). In the statement of the next theorem we write $A^{p,\kappa}_{\mathrm{loc}}$ for $A_{\mathrm{loc}}^{p,\kappa}\left(N,\varrho_\infty\right)$

\begin{theorem}[Local classes with respect to $\infty$\label{localclassesinfty}]
For each $i\in\{1,\ldots,d\}$, consider the exponent $p_i=\mathrm{tr}(\alpha)/\mu_i$, and set $p_{d+1}=1$.
\begin{enumerate}
\medskip
\item Suppose $p\in \left(p_{i+1},p_i\right)$ and $\kappa\geq 0$. The spectrum of $A^{p,\kappa}_{\mathrm{loc}}$ is homeomorphic to $N/K_i$. 

\medskip
\item Suppose $p=p_i$ and $m_i=1$.

\begin{enumerate}
\medskip
\item If $\kappa\leq 1$, the spectrum of $A^{p,\kappa}_{\mathrm{loc}}$ is homeomorphic to $N/K_i$.

\medskip
\item If $\kappa>1$, the spectrum of $A^{p,\kappa}_{\mathrm{loc}}$ is homeomorphic to $N/K_{i-1}$.
\end{enumerate}

\medskip
\item Suppose $p=p_i$, $\kappa\in \left(1+p_i(m_i-2),1+p_i(m_i-1)\right]$, and $m_i\geq 2$ . The spectrum of $A^{p,\kappa}_{\mathrm{loc}}$ is homeomorphic to $N/H_i$.
\end{enumerate}

\medskip
\noindent In particular, in all cases, the $(p,\kappa)$-local cohomology classes on $N$ coincide with the lefts cosets of the corresponding subgroup.
\end{theorem}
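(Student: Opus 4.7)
The plan is to combine Theorem \ref{thelocaltracemap} with a direct analysis of the Orlicz--Besov integrability of coordinate functions on $N$. By Theorem \ref{thelocaltracemap}, $A^{p,\kappa}_{\mathrm{loc}}$ is (up to $\R$) the algebra of continuous representatives of $B^{p,\kappa}_{\mathrm{loc}}(N,\varrho_\infty)$, and its spectrum coincides with the quotient of $N$ by the relation $x\sim_{p,\kappa}y$ iff $u(x)=u(y)$ for every $u\in A^{p,\kappa}_{\mathrm{loc}}$. The theorem therefore reduces to identifying these equivalence classes with the left cosets of the subgroup $L\in\{K_{i-1},H_i,K_i\}$ asserted in each case. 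I would proceed in two complementary halves: produce enough functions in $A^{p,\kappa}_{\mathrm{loc}}$ to separate distinct cosets of $L$, and show that every element of $A^{p,\kappa}_{\mathrm{loc}}$ is constant along each coset of $L$.

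For the first half, I would use exponential coordinates and the Jordan basis $\mathcal{B}$ to define, for each $v\in\mathcal{B}$ sitting at position $k$ in a Jordan block of size $m$ with eigenvalue $\mu_j$, the coordinate function $\pi_v\colon N\to\R$. Since $d_e\tau(t)=\mathrm{Exp}(-t\alpha)$ acts on $v$ as $e^{-t\mu_j}$ times a polynomial in $t$ of degree $m-k$, and the parabolic visual metric satisfies $\varrho_\infty(\tau_t x,\tau_t y)\asymp e^{-t\mu_1}\varrho_\infty(x,y)$, one obtains the sharp modulus of continuity
$$|\pi_v(x)-\pi_v(y)|\lesssim \varrho_\infty(x,y)^{\mu_j/\mu_1}\,\log\bigl(1/\varrho_\infty(x,y)\bigr)^{m-k}$$
near the diagonal, the log power being attained at the pure eigenvector ($k=1$) of a Jordan block of size $m$. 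Inserting this bound in the local Besov--Orlicz integrand and using Ahlfors regularity of $\varrho_\infty$ with dimension $Q=\mathrm{tr}(\alpha)/\mu_1$, convergence near the diagonal reduces to
$$\int_0^{\epsilon}r^{p\mu_j/\mu_1-Q-1}\log(1/r)^{p(m-k)-\kappa}\,dr,$$
so $\pi_v\in A^{p,\kappa}_{\mathrm{loc}}$ exactly when $p>p_j$, or when $p=p_j$ and $\kappa>1+p(m-k)$. This dichotomy directly produces enough coordinates to separate the declared cosets in each case: in case (1) the $\pi_v$ with $v\in V_{i+1}\oplus\cdots\oplus V_d$; in case (2a) one additionally keeps the $V_i$-eigenvectors since $m_i=1$ and $\kappa\leq 1$; in case (2b) these are lost precisely at $\kappa>1$, reducing the separating coordinates to those of $V_{i+1}\oplus\cdots\oplus V_d$; and in case (3) exactly the coordinates of Jordan blocks in $V_i$ of size at most $m_i-1$ survive, which together with the higher-$\mu$ coordinates separate precisely the $H_i$-cosets.

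The reverse inclusion---that every $u\in A^{p,\kappa}_{\mathrm{loc}}$ is constant along each coset of $L$---is the principal obstacle. My plan is a foliated integration argument: $L$ is a left-invariant closed subgroup carrying an Ahlfors regular structure for $\varrho_\infty$ of dimension $Q_L$ determined by its block data, and by the choice of $L$ in each case the pair $(p,\kappa)$ is critical for the intrinsic Besov--Orlicz space on $L$, meaning that the analogue of the one-dimensional integrand above has algebraic exponent exactly $-1$ with log correction on the divergent side. Foliating a compact neighbourhood of $N$ by left cosets of $L$ and writing $d\lambda$ as a transverse integral of an intrinsic measure $d\lambda_L$ via Fubini, finiteness of the global seminorm of $u$ forces finiteness of the intrinsic Besov--Orlicz seminorm of $u|_{gL}$ for a.e.\ coset $gL$; the criticality then forces each such restriction to be constant, and continuity of $u$ propagates this constancy to every coset.

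The technical heart I expect to have to prove is the one-dimensional critical collapse: any continuous function with finite intrinsic Besov--Orlicz $\phi_{p,\kappa}$-seminorm at criticality must be constant. I would derive this as an Orlicz analogue of the classical collapse of $L^p$-Besov at $p=Q$, by dyadic telescoping combined with the $\Delta_2(0)$ doubling of Definition \ref{growth} and a log-weighted Hardy-type inequality, closely paralleling the $L^p$ arguments of \cite{PierrePansu89d} and \cite{Bourdon-Pajot04}. Once this analytic input is in place, the geometric organisation by cosets of $K_{i-1}$, $H_i$ and $K_i$ follows directly from the Jordan decomposition and the distinction between $V_i^0$ and its complement in $V_i$.
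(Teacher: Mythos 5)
Your reduction to the boundary Orlicz--Besov space via Theorem \ref{thelocaltracemap} matches the paper, but after that point your two halves diverge from the paper's proof, and the reverse inclusion --- showing that every $u\in A^{p,\kappa}_{\mathrm{loc}}$ is constant along each coset of $L$ --- has a gap. The paper does \emph{not} work intrinsically on the boundary and does not use any Fubini disintegration of $d\lambda$ along cosets. Instead it works in the bulk $X_{\alpha_\lambda}$: a class in the local cohomology is represented by a smooth $u$ with $du\in L^{p,\kappa}_{\mathrm{loc}}$, and Lemma \ref{critical} shows that the distributional derivative $X^{ij}_k(u_\infty)$ of the hororadial limit vanishes whenever $(p,\kappa)\leq\bigl(p_i,\,1+p_i(m_{ij}+1-2k)\bigr)$. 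The mechanism is the identity $Y_k(t)(u_t)\to X_k(u_\infty)$ together with the estimate $\|Y_k(t)\|_t\asymp t^{2k-m-1}e^{\lambda t\mu_i}$, which comes from the singular value asymptotics of $\mathrm{Exp}(tJ)$ in Lemma \ref{bon}; the $L^{p,\kappa}$-finiteness of $du$ then forces a sequence $t_l\to\infty$ with $\|Y_k(t_l)(u_{t_l})\|_{p,K}\to 0$. This is a pigeonhole argument in the radial variable, not a collapse of an intrinsic Besov space on a coset.

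Your Fubini plan runs into a concrete obstruction: the measure $d\lambda(\xi,\zeta)=\varrho_\infty(\xi,\zeta)^{-2Q}\,dH\otimes dH$ has denominator exponent $2Q$ adapted to the full group $N$, whereas the intrinsic Besov--Orlicz seminorm on a coset $gL$ requires $\varrho_L^{-2Q_L}$ with $Q_L$ the (strictly smaller) Hausdorff dimension of $L$ for the restricted metric. Restricting $dH\otimes dH$ to pairs in the same coset is a measure-zero slice; the actual disintegration over transverse coordinates $(g,g')$ does not produce the intrinsic Besov form because the weight $\varrho_\infty^{-2Q}$ does not factor as a transverse weight times $\varrho_L^{-2Q_L}$. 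You would also need to prove your ``one-dimensional critical collapse'' lemma, and to show that the ambient $(p,\kappa)$ sits exactly at the intrinsic critical exponent of the subgroup $L=K_{i-1},H_i,K_i$ in each case; neither step is carried out, and the latter is not obviously true since $\alpha|_{\mathfrak{l}}$ still has a spectrum spanning several eigenvalues.

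On the forward direction your plan is closer to what the paper does (Lemma \ref{nonconstant}), though the paper extends an \emph{arbitrary} smooth function $u_\infty$ on $N/L$ hororadially and bounds $|du|$ via the dual-basis estimate $|\theta^{ij}_k|_t\asymp t^{m_{ij}-k}e^{-\lambda\mu_i t}$, rather than working with specific exponential coordinates $\pi_v$. If you insist on working with $\pi_v$ directly, you should justify the asserted modulus of continuity $|\pi_v(x)-\pi_v(y)|\lesssim\varrho_\infty(x,y)^{\mu_j/\mu_1}\log(1/\varrho_\infty(x,y))^{m-k}$ in the nonabelian case, since the exponential coordinates of $x^{-1}y$ are not simply the differences of coordinates and involve Baker--Campbell--Hausdorff corrections; the paper's left-invariant frame sidesteps this. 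Finally, note your threshold $\kappa>1+p(m-k)$ differs from the paper's $\kappa>1+p_i(m_{ij}+1-2k)$ from Lemma \ref{critical} once $k\geq 2$; this does not affect the cases actually stated in the theorem (which only involve $k=1$), but it is a sign that your boundary-intrinsic estimate is not sharp where the singular-value estimate is, and a complete treatment would have to reconcile the two.
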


\noindent Let $i(\alpha)=\min\{i:K_i=N\}$. Then the local critical exponent satisfies 
$$(p_{i(\alpha)},0)\leq p_{\neq 0}\left(X_\alpha,\infty,I\right)\leq\left(p_{i(\alpha)},1+p_{i(\alpha)}\left(m_{i(\alpha)}-2\right)^+\right),$$
where $a^+=\max\{a,0\}$. 

\medskip
\noindent The picture is quite different for the local cohomology with respect to the points of $N$.

\begin{theorem}[Local cohomology with respect to $\xi\in N$\label{localclassesN}] The local $\ell^{\,p,\kappa}$-cohomology  of $X_\alpha$ with respect to a point $\xi\in N$ is trivial if, and only if, $(p,\kappa)\leq \left(p_1,1+p_1(m_1-1)\right)$.
\end{theorem}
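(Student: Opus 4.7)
By Theorem \ref{thelocaltracemap}, the local cohomology $\ell^{p,\kappa}_{\mathrm{loc}} H^1(X_\alpha,\xi)$ is isomorphic to $B^{p,\kappa}_{\mathrm{loc}}(\partial_\xi X_\alpha,\varrho_\xi)/\R$, so the task is to characterize when this Fr\'echet Besov space reduces to the constants. Left translation by $N$ is a group of isometries of $X_\alpha$ that acts transitively on $N \subset \partial X_\alpha$ and fixes $\infty$, so the local cohomology at any $\xi\in N$ is isomorphic to that at $\xi = e$, which I fix. Any compact $K \subset \partial_e X_\alpha$ is bounded away from $e$, hence $\varrho_e$ is bi-Lipschitz to the compact visual metric $\varrho$ on $\partial X_\alpha$ when restricted to $K$, and the local Besov condition can be tested using $\varrho$ and the $Q$-Ahlfors regular Hausdorff measure $H$ with $Q=p_1$.

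For the non-triviality direction ($(p,\kappa) > (p_1, 1+p_1(m_1-1))$), the plan is to restrict to $\partial_e X_\alpha$ a non-constant global Besov function on $(\partial X_\alpha,\varrho)$. Such a function is produced from the coordinate along an $m_1$-Jordan block for $\mu_1$: the parabolic visual metric estimates give the bound $|u(\eta)-u(\zeta)| \lesssim \varrho(\eta,\zeta)(\log 1/\varrho(\eta,\zeta))^{m_1-1}$ near the diagonal (a general form of (\ref{log})), and a polar-coordinate integration using $Q=p_1$ shows that the corresponding Besov integral is finite precisely when $(p,\kappa) > (p_1,1+p_1(m_1-1))$. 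Extended to a bounded smooth function on the compact sphere $\partial X_\alpha$, $u$ is globally Besov and non-constant, and its restriction to $\partial_e X_\alpha$ is locally Besov because for every compact $K$ the local integral is dominated by the global one.

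For the triviality direction ($(p,\kappa) \leq (p_1,1+p_1(m_1-1))$), I would show every $u \in B^{p,\kappa}_{\mathrm{loc}}(\partial_e X_\alpha,\varrho_e)$ is $H$-a.e.\ constant. The dilation $\tau(t)$ is a self quasi-isometry of $X_\alpha$ that rescales $\varrho_\infty$ homothetically by $e^{-\mu_1 t}$, so by scale invariance it suffices to control $u$ on a single unit $\varrho$-ball $B \subset N \setminus \{e\}$. For this one uses a localized Orlicz Poincar\'e inequality: the Besov seminorm on $B$ bounds from below the oscillation of $u$ in each coordinate direction, with weights coming from the Jordan block data. In the direction of maximal obstruction --- an $m_1$-block of $\mu_1$, where $(p_1,1+p_1(m_1-1))$ is critical --- subcriticality forces the corresponding oscillation to vanish; larger eigenvalues give more rigid directions handled similarly. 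Thus $u$ is constant on each ball, and by connectedness on all of $\partial_e X_\alpha$.

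The hard part will be the subcritical Orlicz Poincar\'e estimate in the last paragraph. The matching of exponents $(p_1,1+p_1(m_1-1))$ is dictated by how the log-damping of $\phi_{p,\kappa}$ interacts with the log factor from the $m_1$-Jordan block: the supercritical construction and the subcritical rigidity are two sides of the same integral calculation. A clean route, adapting Pansu's strategy for $\ell^p$-cohomology vanishing on Heintze groups, is to pass back to $X_\alpha$ and work with an $L^{\phi_{p,\kappa}}$-gradient norm along a vertical geodesic ray, using the Jordan decomposition of $\alpha$ to handle the $\R$-direction integration.
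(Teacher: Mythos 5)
Your two directions are at different levels of completeness.

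\medskip
\noindent\textbf{Non-triviality.} This is essentially correct and parallel to the paper's argument. The paper works on $X_\alpha$ directly: take any smooth $u_\infty$ with compact support in $N$, cut off by a bump $\eta(t)$ in the $t$-direction, and apply the estimates of Lemma~\ref{nonconstant} (with $\mathcal{I}_0=\emptyset$, so $\mathsf{i}=1$, $\mathsf{m}=m_1-1$) to conclude $du\in L^{p,\kappa}(X_\alpha)$ globally. Your boundary-side version is the same computation. One bookkeeping slip: the Hausdorff dimension of the fixed parabolic visual metric is $Q=\lambda\,\mathrm{tr}(\alpha)$, not $p_1=\mathrm{tr}(\alpha)/\mu_1$; the final exponent calculation still closes because the $\lambda$-scaling appears both in the H\"older exponent $\lambda\mu_1$ of the coordinate and in $Q$, but as written ``$Q=p_1$'' is false unless one normalizes $\lambda=1/\mu_1$.

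\medskip
\noindent\textbf{Triviality.} There is a genuine gap here, and it is precisely the part of the argument that is specific to $\xi\in N$. Your plan rests on a ``subcritical Orlicz Poincar\'e inequality'' that forces \emph{all} oscillation of $u$ to vanish, justified by the claim that ``larger eigenvalues give more rigid directions handled similarly.'' That claim is backwards. The directional estimates obtained from the Jordan-block calculus (Lemma~\ref{critical}) give \emph{less} control for larger eigenvalues: $p_i=\mathrm{tr}(\alpha)/\mu_i$ is decreasing in $i$, so the hypothesis $(p,\kappa)\le(p_i,\,1+p_i(m_{ij}+1-2k))$ is harder to satisfy for $i>1$. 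At the threshold $(p,\kappa)=(p_1,\,1+p_1(m_1-1))$, Lemma~\ref{critical} kills only the derivatives $X^{1j}_1(u_\infty)$ along the top eigenvectors of the maximal $\mu_1$-Jordan blocks, i.e.\ along $\mathfrak{h}_1$ — not in all directions. This is exactly why Theorem~\ref{localclassesinfty} gives a strictly smaller critical exponent for $\xi=\infty$: there, the cohomology is nontrivial precisely because $u_\infty$ can vary in the other directions. What closes the argument for $\xi\in N$ in the paper is a propagation mechanism, Lemma~\ref{infinity}: because $du$ is $\phi$-integrable on the sets $Y(r,t)$ (complements of a cone over $\xi$), the flow $\varphi_s$ of the left-invariant eigenvector field $\tilde X$ is proper and measure-preserving, and translating along it shows that if $X(u_\infty)=0$ then $Z(u_\infty)=0$ for any $Z$ with $[X,Z]=0$. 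Applying this with $Z$ central in $\mathfrak{n}$ (Corollary~\ref{constantctp}) yields $u_\infty$ constant a.e. Your sketch does not contain this propagation step, and without it a Poincar\'e-type estimate will not force vanishing of the derivatives along the eigenvalues $\mu_i$ with $i\ge 2$, nor along the smaller $\mu_1$-blocks.

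\medskip
\noindent Two smaller cautions. First, the dilation reduction to a single unit ball is not as clean as you suggest: $\tau(t)$ fixes both $e$ and $\infty$, while the local cohomology at $\xi=e$ requires $\phi$-integrability on subcomplexes whose boundary avoids $e$ but \emph{does} contain $\infty$, so compacta of $\partial_e X_\alpha$ are not uniformly rescaled to a fixed region and $u$ is not dilation-invariant. Second, compact subsets of $\partial_e X_\alpha$ can contain $\infty$, so the stated bi-Lipschitz comparison between $\varrho_e$ and the global visual metric $\varrho$ on $K$ requires separately handling a neighborhood of $\infty$.
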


\noindent In particular, $p_{\neq 0}(X_\alpha,\xi,I)=\left(p_1,1+p_1(m_1-1)\right)$ for any $\xi\in N$. Theorem \ref{fixpoint} follows therefore from Theorems \ref{localclassesinfty} and \ref{localclassesN}, see Section \ref{hnc}. Notice that in the Carnot type case, $p_{\neq 0}(X_\alpha,\infty,I)=p_{\neq 0}(X_\alpha,\xi,I)=(p_1,1)$ for any $\xi\in N$. As an immediate consequence, we obtain the following result for the global cohomology.

\begin{corollary}\label{globalheintze}
The critical exponent of the $\ell^{\,p,\kappa}$-cohomology of $X_\alpha$ is given by 
$$p_{\neq 0}(X_\alpha,I)=\left(p_1,1+p_1(m_1-1)\right).$$
Moreover, the $\ell^{\,p,\kappa}$-cohomology is also trivial at this critical exponent.
\end{corollary}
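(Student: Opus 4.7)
The plan is to combine Theorems \ref{localclassesinfty} and \ref{localclassesN} via the trace isomorphisms of Theorems \ref{thetracemap} and \ref{thelocaltracemap}, relating global Orlicz-Besov functions on $\partial X_\alpha=N\cup\{\infty\}$ to local ones on each $\partial_\xi X_\alpha$. Since $\{\phi_{p,\kappa}\}$ is non-decreasing in the lexicographic order on $(p,\kappa)\in I$, the cohomologies form a nested family, so the corollary reduces to proving triviality for $(p,\kappa)\leq(p_1,1+p_1(m_1-1))$ and non-triviality for $(p,\kappa)>(p_1,1+p_1(m_1-1))$.

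The triviality direction follows by fixing any $\xi\in N$ and considering the restriction map
$$r_\xi:B^{p,\kappa}(\partial X_\alpha,\varrho)/\R\to B^{p,\kappa}_{\mathrm{loc}}(\partial_\xi X_\alpha,\varrho_\xi)/\R.$$
It is well-defined because on any compact $K\subset\partial_\xi X_\alpha$ the metrics $\varrho$ and $\varrho_\xi$ are bi-Lipschitz equivalent (together with the associated Hausdorff measures), with constants depending on the $\varrho$-distance from $K$ to $\xi$; this bounds $\langle u|_K\rangle_{\phi_{p,\kappa},\xi,K}$ by a multiple of $\langle u\rangle_{\phi_{p,\kappa}}$. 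The map is injective since $\{\xi\}$ is Hausdorff-null, making any function that is almost everywhere constant on $\partial_\xi X_\alpha$ almost everywhere constant on $\partial X_\alpha$. Applying Theorem \ref{localclassesN} via Theorem \ref{thelocaltracemap}, the target vanishes in the stated range, so the source does too.

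For non-triviality, Theorem \ref{localclassesN} produces, for any $\xi\in N$ and any $(p,\kappa)>(p_1,1+p_1(m_1-1))$, a continuous non-constant $u\in A^{p,\kappa}_{\mathrm{loc}}(\partial_\xi X_\alpha,\varrho_\xi)$. I would multiply $u$ by a Lipschitz cutoff $\eta$ with compact support $K\subset\partial_\xi X_\alpha$ enjoying a positive $\varrho$-buffer to $\xi$ and to $\partial X_\alpha\setminus K$, so that $\eta u$ is still non-constant, and extend by $0$ at $\xi$. The main obstacle is to show that the extension lies in $B^{p,\kappa}(\partial X_\alpha,\varrho)$. Splitting the Luxemburg integral into the diagonal part over $K\times K$ and the cross terms over $K\times(\partial X_\alpha\setminus K)$: the diagonal part is bounded by the local seminorm of $\eta u$, via the Leibniz bound $|\eta u(x)-\eta u(y)|\leq\|\eta\|_\infty|u(x)-u(y)|+\|u\|_{L^\infty(K)}|\eta(x)-\eta(y)|$, the doubling of $\phi_{p,\kappa}$, and the fact that above the critical value either $p>p_1$ or $\kappa>1$ (so Lipschitz functions are themselves locally Orlicz-Besov); the cross terms reduce, via boundedness of $\eta u$ and the $r_0$-buffer, to the tail $\int_{\varrho(\xi',\cdot)\geq r_0}dH/\varrho^{2Q}$, which is finite by Ahlfors regularity. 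This yields a non-trivial class in $\ell^{p,\kappa}H^1(X_\alpha)$.
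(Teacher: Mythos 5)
Your triviality direction is sound and essentially the paper's intended argument: a global Orlicz--Besov function restricts, on any compact $K\subset\partial_\xi X_\alpha$, to a locally Orlicz--Besov function (the metrics $\varrho$ and $\varrho_\xi$ are comparable on compacts), and since $\{\xi\}$ is $H$-null the restriction map $B^{p,\kappa}(\partial X_\alpha,\varrho)/\R\to B^{p,\kappa}_{\mathrm{loc}}(\partial_\xi X_\alpha,\varrho_\xi)/\R$ is injective; Theorem~\ref{localclassesN} then kills the target.

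Your non-triviality argument, however, has a gap. To control the Lipschitz cutoff $\eta$ in the Leibniz decomposition you invoke ``above the critical value either $p>p_1$ or $\kappa>1$, so Lipschitz functions are themselves locally Orlicz--Besov.'' But Lemma~\ref{localLip} gives Lipschitz functions in $B^{p,\kappa}_{\mathrm{loc}}$ only when $(p,\kappa)>(Q,1)$, where $Q$ is the Ahlfors regular dimension of the \emph{chosen visual metric}, not $p_1$. For a Heintze group, the visual metric comes from a CAT$(-1)$ rescaling $\alpha_\lambda=\lambda\alpha$ with $\lambda$ large, giving $Q=\lambda\,\mathrm{tr}(\alpha)$, and one has $Q\geq p_1$ with strict inequality in general (indeed, strict inequality for \emph{every} Ahlfors regular visual metric whenever $m_1\geq2$, by Corollary~\ref{confdimheintze}). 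Thus for $(p_1,1+p_1(m_1-1))<(p,\kappa)\leq(Q,1)$ the integral $\int_{K\times K}\phi_{p,\kappa}(|\eta(x)-\eta(y)|/\alpha)\,d\lambda$ diverges and your cutoff need not lie in the Besov space, so the diagonal bound fails precisely in the range closest to the critical exponent --- which is exactly where you need it.

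The intended shortcut, which makes the corollary ``immediate,'' is to bypass cutoffs altogether: in the proof of Theorem~\ref{localclassesN} the representative is built as $u(x,t)=\eta(t)u_\infty(x)$ with $u_\infty$ smooth and \emph{compactly supported} in $N$ and $\eta$ a cutoff in the $t$-direction only, and Lemma~\ref{nonconstant}'s estimate shows $du\in L^{p,\kappa}(X_\alpha)$ globally (not merely locally). The trace $u_\infty$ therefore already belongs to $B^{p,\kappa}(\partial X_\alpha,\varrho)$ and is non-constant, so $\ell^{p,\kappa}H^1(X_\alpha)\neq 0$ for all $(p,\kappa)>(p_1,1+p_1(m_1-1))$ without any boundary cutoff. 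You should replace your cutoff construction with this observation.
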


\noindent As an example, consider the Heintze group $X_3$ introduced at the beginning of this section. The critical exponents are in this case is $p_{\neq 0}(X_3,\infty,I)=(2,3)$ and $p_{\neq 0}(X_3,\xi,I)=(2,1)$ for any $\xi\in\R^2$. The subgroup $H_1$ is $\R\times \{0\}$. Notice that even though the conformal dimension of $\partial X_3$ is equal to $2$, it is not attained. This is proved in \cite{HaissinskyPilgrim11}[Thm.\,1.\,8] by techniques of two dimensional conformal dynamics which do not apply to the higher dimensional case. We refer the reader to \cite{MackayTyson10} for an account on conformal dimension.

\medskip
\noindent The $\kappa$-coordinate of the critical exponent can be interpreted as a second order quasi-isometry invariant ``dimension'' of a hyperbolic complex $X$ as above. When the Ahlfors regular conformal dimension $Q$ of $\partial X$ is attained, the critical exponents satisfy the inequality $p_{\neq 0}(X,\xi,I)\leq(Q,1)$ for any $\xi\in\partial X$, see Lemma \ref{localLip}.

\begin{corollary}\label{confdimheintze}
If $m_1\geq 2$, the Ahlfors regular conformal dimension of $\partial X_\alpha$ is not attained.
\end{corollary}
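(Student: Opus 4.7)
The plan is to derive Corollary~\ref{confdimheintze} by contradiction, combining the constraint provided by Lemma~\ref{localLip} with the explicit local critical exponent computed in Theorem~\ref{localclassesN}. Suppose that the Ahlfors regular conformal dimension $Q$ of $\partial X_\alpha$ is attained. Lemma~\ref{localLip} then produces the inequality $p_{\neq 0}(X_\alpha, \xi, I) \leq (Q, 1)$ for every $\xi \in \partial X_\alpha$. Specializing to an arbitrary point $\xi \in N$ and substituting the value $p_{\neq 0}(X_\alpha, \xi, I) = \left(p_1,\, 1 + p_1(m_1-1)\right)$ supplied by Theorem~\ref{localclassesN} yields
$$\left(p_1,\, 1 + p_1(m_1-1)\right) \,\leq\, (Q, 1)$$
in the lexicographic order on $I = [1, \infty) \times [0, \infty)$.

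Under the hypothesis $m_1 \geq 2$, together with $p_1 = \mathrm{tr}(\alpha)/\mu_1 > 0$, the second coordinate $1 + p_1(m_1-1)$ is strictly greater than $1$. By the very definition of the lexicographic order, the displayed inequality then forces the strict inequality $p_1 < Q$ on the first coordinate (the alternative $p_1 = Q$ would require $1 + p_1(m_1-1) \leq 1$, i.e.\ $m_1 \leq 1$, which is excluded).

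To close the loop I would invoke the matching upper bound $Q \leq p_1$. The parabolic visual metric $\varrho_\infty$ used in Section~\ref{resultsHeintze} is Ahlfors regular of Hausdorff dimension $p_1$; it corresponds to the visual metric on $\partial X_\alpha$ associated to the maximal admissible visual parameter $\mu_1$, whose Hausdorff dimension equals the volume entropy $\mathrm{tr}(\alpha)$ divided by $\mu_1$. Since the Ahlfors regular conformal dimension is the infimum of Hausdorff dimensions of Ahlfors regular metrics in the quasisymmetry class of $\partial X_\alpha$, we get $Q \leq p_1$, directly contradicting $p_1 < Q$ and proving the corollary.

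The main (essentially the only) subtlety is justifying the inequality $Q \leq p_1$, i.e.\ exhibiting the Ahlfors regular visual metric on $\partial X_\alpha$ of Hausdorff dimension $p_1$. This is a standard computation for purely real Heintze groups and is implicit in the Ahlfors regularity hypothesis of Theorem~\ref{localclassesinfty}, but should be stated explicitly. The role of the hypothesis $m_1 \geq 2$ is precisely to push the $\kappa$-coordinate of the local critical exponent strictly above $1$, which is what ultimately prevents the conformal dimension from being attained.
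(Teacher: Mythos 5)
Your proof is correct and follows essentially the same route the paper intends: the corollary is placed immediately after the sentence asserting $p_{\neq 0}(X,\xi,I)\leq(Q,1)$ via Lemma~\ref{localLip}, and your combination of this inequality with the computed critical exponent $p_{\neq 0}(X_\alpha,\xi,I)=(p_1,1+p_1(m_1-1))$ from Theorem~\ref{localclassesN}, plus the lexicographic argument forcing $p_1<Q$, plus $Q\leq p_1$, is exactly what the author has in mind. One small imprecision worth flagging: the metric $\varrho_\infty$ as literally set up in Section~\ref{hnc} is Ahlfors regular of dimension $\lambda\mu=\lambda\,\mathrm{tr}(\alpha)$ with $\lambda$ chosen large to ensure $\mathrm{CAT}(-1)$, not of dimension $p_1$; to get the $p_1$-regular representative in the quasisymmetry class one should either snowflake (with exponent $\lambda\mu_1\geq 1$, which is the wrong direction for a snowflake of an actual metric) or, more cleanly, observe directly that the left-invariant quasi-metric on $N$ homogeneous under $\tau(t)$ with the $\mu_1$-eigendirections given degree one is bi-Lipschitz to a genuine metric of Hausdorff dimension $\mathrm{tr}(\alpha)/\mu_1=p_1$ and lies in the quasisymmetry class of $\partial X_\alpha$. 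You acknowledge this gap explicitly, which is appropriate; filling it in is a routine Heintze-group computation but should be stated as such rather than attributed to $\varrho_\infty$ itself.
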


\noindent The pointed sphere conjecture is not settled in the Carnot type case. Our methods do not apply, essentially, because Carnot groups equipped with Carnot-Carath\'eodory metrics are Loewner spaces \cite{Heinonen01}. In particular, they contain ``a lot of rectifiable curves'', which makes difficult to distinguish points by invariants strongly related to the moduli of curves.

\medskip
\noindent The restrictions imposed on self quasi-isometries of $X_\alpha$ preserving a foliation at infinity are manifest in the rigidity results due to Xie, in the case when $N$ is abelian or isomorphic to a Heisemberg group. It is there shown that self quasi-isometries are \emph{almost-isometries}, i.e.\ a $(1,\mathsf{C})$-quasi-isometry. This question is motivated by the work of Farb and Mosher on abelian-by-cyclic groups \cite{FarbMosher00}. We apply Xie's approach to our more general context. 

\begin{corollary}\label{biLip}
Let $X_\alpha$ be a purely real Heintze group, and suppose that the normalizer of $\mathfrak{h}_1$, $N_{\mathfrak{n}}(\mathfrak{h}_1)=\left\{v\in\mathfrak{n}:[v,w]\in\mathfrak{h}_1,\ \forall w\in\mathfrak{h}_1\right\}$, is strictly bigger than $\mathfrak{h}_1$. Then, any self quasi-isometry of $X_\alpha$ is an almost isometry.
\end{corollary}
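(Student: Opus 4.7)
The plan is to reduce Corollary \ref{biLip} to a bi-Lipschitz statement on the parabolic boundary, following the rigidity strategy used by Xie in the abelian and Heisenberg settings, and then to invoke a standard argument to pass from bi-Lipschitz on the boundary to almost-isometry of $X_\alpha$. Let $F:X_\alpha\to X_\alpha$ be a self quasi-isometry. The normalizer hypothesis implies that $X_\alpha$ is not of Carnot type, since in the Carnot case $\mathfrak{h}_1=\mathfrak{n}$ and then $N_\mathfrak{n}(\mathfrak{h}_1)=\mathfrak{h}_1$. Hence Theorem \ref{fixpoint} applies, so the boundary extension of $F$ fixes $\infty$ and preserves left cosets of $H_1$; equivalently, the induced map $f$ on the parabolic boundary $(N,\varrho_\infty)$ is a quasi-symmetric self-homeomorphism preserving the foliation $\mathcal{F}=\{xH_1:x\in N\}$.

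The additional input supplied by the hypothesis is a transverse isometric flow on $(N,\varrho_\infty)$ compatible with $\mathcal{F}$. Fix $v\in N_\mathfrak{n}(\mathfrak{h}_1)\setminus\mathfrak{h}_1$ and set $\gamma(t)=\exp(tv)$. Left multiplication $L_{\gamma(t)}$ is an isometry of $(N,\varrho_\infty)$, and the normalizer condition yields $L_{\gamma(t)}(xH_1)=\gamma(t)xH_1$, so $L_{\gamma(t)}$ sends leaves to leaves while escaping any single leaf. This one-parameter group of isometries realises the transverse direction that is implicit in the cases treated by Xie.

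The main step is then to promote $f$ from quasi-symmetric to bi-Lipschitz by combining the leaf-preserving property with the transverse isometric flow. I would follow Xie's differentiation strategy: Pansu-type differentiation of $f$ along the leaves, where the restriction of $\varrho_\infty$ to each leaf is a homogeneous metric on $H_1$, produces a measurable scaling factor for $f$ almost everywhere; the preservation of $\mathcal{F}$ forces this factor to be essentially constant along each leaf, while the existence of the transverse isometric flow $L_{\gamma(t)}$ commuting with $\mathcal{F}$ propagates the constancy across leaves. Hence $f$ is a similarity of $(N,\varrho_\infty)$, in particular bi-Lipschitz. Once $f$ is bi-Lipschitz, the fact that vertical translations in $X_\alpha$ act by dilations on $(N,\varrho_\infty)$ allows one to absorb the multiplicative constant into a vertical adjustment of $F$; the Morse lemma then places $F$ at bounded distance from an isometry of $X_\alpha$, i.e.\ an almost-isometry.

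The main obstacle lies in the differentiation/constancy argument when $H_1$ contains Jordan blocks of size $\geq 2$. In that regime, the metric induced on a leaf by $\varrho_\infty$ is only filtered homogeneous, featuring the logarithmic corrections visible in~(\ref{log}), so a suitable Pansu-type differentiation theorem must be established on the leaves before the constancy argument can be run. Likewise, the transverse flow $L_{\gamma(t)}$ must be related quantitatively to the scaling structure of each leaf so as to enforce constancy across leaves, with care taken at points where the orbits of $L_{\gamma(t)}$ become tangential to $\mathcal{F}$. Carrying this technical extension of Xie's framework through in the full generality of purely real Heintze groups is the core of the proof.
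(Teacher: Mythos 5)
Your high-level framing is right (reduce to bi-Lipschitzness of the boundary map, via Theorem \ref{fixpoint} and the coset foliation by $H_1$), but the core of your argument is a sketch that both misses the decisive tool the paper uses and misidentifies the technical obstruction, and it is not completed.

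The paper does not carry out a Pansu differentiation on the leaves at all. Instead, it reduces the bi-Lipschitz statement to the fiber-preservation theorem of Le Donne--Xie \cite[Theorem 1.1]{LeDonneXie14}, which says precisely that a quasi-symmetric homeomorphism preserving a foliation by cosets of a \emph{geodesic} subgroup is bi-Lipschitz, provided (1) every coset is accumulated by parallel cosets and (2) cosets diverge sub-linearly. The two things to check are then concrete geometric statements about the foliation $\{xH_1\}$ in $(N,\varrho_\infty)$, independent of $F$. The normalizer hypothesis is used exactly to produce, for any $x$, a sequence $x_j=xz_j$ with $z_j\in N_N(H_1)\setminus H_1$, $z_j\to e$: conjugation-invariance of $H_1$ under $z_j$ plus left-invariance of $\varrho_\infty$ give that $x_jH_1$ is parallel to $xH_1$, which is condition (1). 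Condition (2) is a direct Baker--Campbell--Hausdorff estimate comparing the contraction rates of $\mathrm{Exp}(-s\alpha)$ along $\mathfrak{h}_1$ and along a polynomial perturbation of it, and it has nothing to do with the normalizer. Your transverse isometric flow $L_{\gamma(t)}$ is at best a version of the construction behind (1), but you never formulate the parallelism condition and you never address sub-linear divergence, which is the input that actually controls the leaf-to-leaf variation of the stretching factor. Asserting that ``the transverse isometric flow propagates constancy across leaves'' is not justified as stated; note that \emph{all} left translations preserve the coset foliation, so simply having an isometric flow transverse to the leaves is far too weak to force anything.

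There is also a substantive misdiagnosis in your ``main obstacle'' paragraph. You worry that when $H_1$ contains Jordan blocks of size $\geq 2$ the leaf metric is only filtered homogeneous with logarithmic corrections, so one would need a new Pansu-type differentiation theorem there. This is not the case: by construction $\mathfrak{h}_1=\mathrm{LieSpan}(V_1^0)$ is generated by \emph{actual} $\mu_1$-eigenvectors, so $\alpha$ acts on $k$-fold brackets as multiplication by $k\mu_1$, i.e.\ $\alpha|_{\mathfrak{h}_1}$ is a diagonal Carnot grading. Hence, as the paper observes, $\varrho_\infty^{1/\mu_1}$ restricted to a leaf is bi-Lipschitz to a Carnot--Carath\'eodory metric, which is geodesic; this geodesicity is precisely what one needs to apply \cite{LeDonneXie14}, and no logarithmic corrections appear inside the leaves. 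The Jordan-block phenomenon you describe is transverse to $H_1$, not internal to it, and it is what drives the estimates in condition (2) -- which your sketch does not attempt.

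Finally, the last step of your outline conflates ``bi-Lipschitz boundary map implies $F$ is an almost-isometry'' with ``$F$ is at bounded distance from an isometry''; these are different statements, and only the former is what is needed and what follows.
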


\noindent The Corollary \ref{biLip} applies, in particular, in the case when $N$ is abelian or isomorphic to a Heinsemberg group, and $X_\alpha$ is not of Carnot type, generalizing therefore the previous known results.

\medskip
\noindent We obtain also results regarding the quasi-isometric classification of Heintze groups. It is conjectured that two purely real Heintze groups are quasi-isometric if, and only if, they are isomorphic \cite{UrsulaHamenstadt87,Cornulier13}. By \cite[Theorem 2]{PierrePansu89m}, the conjecture is true when $X_\alpha$ and $X_\beta$ are both purely real Heintze groups of Carnot type. Notice that if $\alpha_1$ is the restriction of $\alpha$ to $\mathfrak{h}_1$, then $H_1\rtimes_{\alpha_1} \R$ is a Heintze group of Carnot type. Also, that if there exists a quasi-isometry between two Heintze groups $X_\alpha$ and $X_\beta$, then there exists a quasi-isometry sending $\infty$ to $\infty$ \cite[Lemma 6.D.1]{Cornulier13}. From Pansu's theorem and Theorem \ref{localclassesinfty}, we obtain the following consequence.

\begin{corollary}\label{carnotnoncarnot}
Let $X_\alpha$ and $X_\beta$ be two purely real Heintze groups. If they are quasi-isometric, then $H_1^{(\alpha)}\rtimes_{\alpha_1} \R$ and $H_1^{(\beta)}\rtimes_{\beta_1} \R$ are isomorphic. In particular, if $X_\alpha$ is of Carnot type and $X_\beta$ is not, then they are not quasi-isometric.
\end{corollary}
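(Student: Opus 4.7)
The plan is to combine the quasi-isometry invariance of the family of local Orlicz--Besov algebras $\{A^{p,\kappa}_{\mathrm{loc}}\}$ with Theorem \ref{localclassesinfty} to transport the $H_1$-foliation across any quasi-isometry, and then invoke Pansu's rigidity theorem for purely real Heintze groups of Carnot type.

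Given a quasi-isometry $F_0:X_\alpha\to X_\beta$, by \cite[Lemma 6.D.1]{Cornulier13} I post-compose with a left translation in $X_\beta$ to obtain a quasi-isometry $F:X_\alpha\to X_\beta$ whose boundary extension satisfies $\partial F(\infty)=\infty$. The restriction $f:=\partial F|_{N_\alpha}:N_\alpha\to N_\beta$ is then a quasisymmetry between the parabolic visual boundaries. By Theorem \ref{thelocaltracemap} and the QI invariance of $\ell^{p,\kappa}_{\mathrm{loc}} H^1$, pullback by $f$ provides, for every $(p,\kappa)\in I$, a Fréchet algebra isomorphism $f^*:A^{p,\kappa}_{\mathrm{loc}}(N_\beta)\to A^{p,\kappa}_{\mathrm{loc}}(N_\alpha)$, and consequently $f$ sends the $\ell^{p,\kappa}$-equivalence classes on $N_\alpha$ onto those on $N_\beta$. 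Since $p_1=\mathrm{tr}(\alpha)/\mu_1$ and $m_1$ can be read off the critical exponent of Theorem \ref{localclassesN} and the jump locus of $(p,\kappa)\mapsto\dim_T\mathrm{Sp}(A^{p,\kappa}_{\mathrm{loc}})$, they are QI invariants, so $p_1^{(\alpha)}=p_1^{(\beta)}$ and $m_1^{(\alpha)}=m_1^{(\beta)}$. At the common parameter $(p,\kappa)=(p_1,1+p_1(m_1-1))$---which falls in case (3) of Theorem \ref{localclassesinfty} when $m_1\ge 2$ and in case (2)(a) when $m_1=1$ (where $H_1=K_1$)---both spectra are homeomorphic to $N/H_1$. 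Hence $f$ maps left cosets of $H_1^{(\alpha)}$ onto left cosets of $H_1^{(\beta)}$, and after one further left translation in $N_\beta$ I may assume $f(H_1^{(\alpha)})=H_1^{(\beta)}$.

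Next I would pass from the boundary to the bulk. The purely real Heintze subgroup $Y_\alpha:=H_1^{(\alpha)}\rtimes_{\alpha_1}\R\subseteq X_\alpha$ is a closed quasi-convex subspace with visual boundary $H_1^{(\alpha)}\cup\{\infty\}$, and analogously for $Y_\beta\subseteq X_\beta$. Because in a Gromov-hyperbolic space the quasi-convex hull of a boundary subset is determined up to bounded Hausdorff distance by that subset, $F$ sends $Y_\alpha$ into a bounded neighborhood of $Y_\beta$ and $F^{-1}$ sends $Y_\beta$ into a bounded neighborhood of $Y_\alpha$, so $F$ restricts to a quasi-isometry $Y_\alpha\to Y_\beta$. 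Since $\alpha_1$ acts on $\mathfrak{h}_1$ with smallest eigenvalue $\mu_1$ and eigenspace $V_1^0$ Lie-generating $\mathfrak{h}_1$, both $Y_\alpha$ and $Y_\beta$ are purely real Heintze groups of Carnot type, and Pansu's theorem \cite[Theorem 2]{PierrePansu89m} upgrades this quasi-isometry to a Lie group isomorphism $Y_\alpha\simeq Y_\beta$, which is the first assertion.

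For the second assertion, Carnot type forces $m_1=1$: by the derivation property, commutators of $\mu_1$-eigenvectors are $2\mu_1$-eigenvectors, so $\mathrm{LieSpan}(V_1^{\mathrm{eig}})$ decomposes as a direct sum of true $\alpha$-eigenspaces; if it equals $\mathfrak{n}$, then $V_1=V_1^{\mathrm{eig}}$, i.e.\ $m_1=1$, and in turn $\mathfrak{h}_1=\mathrm{LieSpan}(V_1^0)=\mathfrak{n}$. Hence when $X_\alpha$ is Carnot, $Y_\alpha=X_\alpha$, and the isomorphism $X_\alpha\simeq Y_\beta$ combined with the homeomorphism of boundary spheres induced by any quasi-isometry between Heintze groups (which forces $\dim N_\alpha=\dim N_\beta$) yields $\dim H_1^{(\beta)}=\dim N_\beta$, so $H_1^{(\beta)}=N_\beta$ by connectedness, contradicting that $X_\beta$ is not of Carnot type. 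The step I expect to be the main obstacle is the boundary-to-bulk passage: rigorously showing that $F$ restricts, up to bounded distance, to a quasi-isometry $Y_\alpha\to Y_\beta$ requires combining the Gromov-hyperbolic theory of quasi-convex hulls with the explicit sub-Heintze-group structure of $Y_\alpha\subset X_\alpha$ and $Y_\beta\subset X_\beta$.
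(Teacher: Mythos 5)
Your overall strategy---normalize so that $\partial F(\infty)=\infty$, use the spectrum computation of $A^{p,\kappa}_{\mathrm{loc}}$ from Theorem~\ref{localclassesinfty} at the critical parameter $(p_1,1+p_1(m_1-1))$ to show that $\partial F$ carries $H_1^{(\alpha)}$-cosets to $H_1^{(\beta)}$-cosets, and then apply Pansu's rigidity to the Carnot-type sub-Heintze groups---is exactly what the paper's one-line derivation intends. Your dimension-count argument for the ``in particular'' clause (Carnot type forces $m_1=1$ and $\mathfrak{h}_1=\mathfrak{n}$, so $Y_\alpha=X_\alpha$, and comparing boundary-sphere dimensions forces $H_1^{(\beta)}=N_\beta$) is also correct.

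The gap you flag yourself is genuine, and it is the one step that must be supplied before the appeal to Pansu: you need to know that $Y_\alpha=H_1^{(\alpha)}\rtimes_{\alpha_1}\R$ is quasi-convex in $X_\alpha$ with limit set $H_1^{(\alpha)}\cup\{\infty\}$ before the quasi-convex hull machinery can transport the quasi-isometry. This is true but is not a generic hyperbolic fact; it uses the $\tau$-invariance of $H_1^{(\alpha)}$. An equivalent and arguably cleaner route stays entirely on the boundary, in the spirit of the proof of Corollary~\ref{abelian}: the restriction of the parabolic visual quasi-metric $\varrho_\infty^{(\alpha)}$ of $N_\alpha$ to $H_1^{(\alpha)}$ is bi-Lipschitz to the \emph{intrinsic} parabolic visual metric of $H_1^{(\alpha)}$ as the boundary of $Y_\alpha$ (i.e.\ the snowflaked Carnot--Carath\'eodory metric), because both functions are $H_1^{(\alpha)}$-left-invariant and homogeneous with the same exponent under $\tau(t)|_{H_1^{(\alpha)}}$, so the ``essential uniqueness'' argument behind~(\ref{quasimetric}) applies. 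Then $\partial F|_{H_1^{(\alpha)}}$ is directly a quasisymmetry between the Carnot groups $H_1^{(\alpha)}$ and $H_1^{(\beta)}$ in their Carnot--Carath\'eodory metrics, and \cite[Theorem 2]{PierrePansu89m} gives the isomorphism $Y_\alpha\simeq Y_\beta$ without invoking quasi-convex hulls in the bulk. Either way, this bridging step needs to be written out; in your version the quasi-convexity of $Y_\alpha$ must be proved, not assumed.

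One small slip: left translations of $X_\beta$ fix $\infty_\beta$ on the boundary, so post-composing $F_0$ with one cannot move $\partial F_0(\infty_\alpha)$ to $\infty_\beta$. The normalization $\partial F(\infty)=\infty$ really does require the full content of \cite[Lemma 6.D.1]{Cornulier13}, whose construction is not a translation. The later normalization $f(H_1^{(\alpha)})=H_1^{(\beta)}$ by a further left translation of $N_\beta$ is, however, fine.
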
 

\noindent In the abelian type case, that is, when $N$ is abelian, the algebra is not playing any role, and we obtain another, and more direct, proof of the following result of Xie.

\begin{corollary}\cite[Theorem 1.1]{XiangdongXie14}\label{abelian}
Let $X_\alpha$ and $X_\beta$ be two purely real Heintze groups of abelian type. If they are quasi-isometric, there exists $\lambda>0$ such that $\alpha$ and $\lambda\beta$ have the same Jordan form. In particular, $X_\alpha$ and $X_\beta$ are isomorphic.
\end{corollary}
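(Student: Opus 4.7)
My plan is to deduce the corollary from the quasi-isometry invariance of the local Orlicz--Besov spectra furnished by Theorem \ref{localclassesinfty}, exploiting the simplifications afforded by the abelian hypothesis: since $[\mathfrak{n},\mathfrak{n}]=0$, one has $\mathfrak{k}_i=W_i=V_1\oplus\cdots\oplus V_i$ and $\mathfrak{h}_i=W_{i-1}\oplus V_i^0$, so that $\dim K_i-\dim K_{i-1}=\dim V_i$ and $\dim K_i-\dim H_i=\dim V_i-\dim V_i^0$ directly encode multiplicities of generalized eigenspaces and of maximal Jordan blocks. Given a quasi-isometry $F:X_\alpha\to X_\beta$, \cite[Lemma 6.D.1]{Cornulier13} allows me to assume $F$ extends to a boundary homeomorphism fixing $\infty$; consequently $\mathrm{Sp}(A^{p,\kappa}_{\mathrm{loc}})$ and its topological dimension $s(p,\kappa)$ agree for the two groups at every $(p,\kappa)\in I$.

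The first step is to recover the eigenvalue data of $\alpha$ up to a global scalar. By Theorem \ref{localclassesinfty}(1), on each open strip $p\in(p_{i+1},p_i)$ the function $s(p,\kappa)$ equals $\dim N-\dim K_i$, and it jumps strictly each time $p$ crosses a critical value $p_j=\tr(\alpha)/\mu_j$ from above. Therefore $\{p_j^{(\alpha)}\}=\{p_j^{(\beta)}\}$ as ordered sets, and setting $\lambda:=\tr(\alpha)/\tr(\beta)$ yields $\mu_j^{(\alpha)}=\lambda\,\mu_j^{(\beta)}$ for every $j$, while the size of the corresponding jump in $s$ gives $\dim V_j^{(\alpha)}=\dim V_j^{(\beta)}$.

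The second step is to recover the Jordan structure at each $\mu_i$ from items (2) and (3) of the same theorem. When $m_i^{(\alpha)}=1$, the spectrum at $p=p_i$ switches from $N/K_i$ to $N/K_{i-1}$ precisely at $\kappa=1$; this is intrinsic, so $m_i^{(\beta)}=1$ as well. When $m_i\geq 2$, the $\kappa$-interval $(1+p_i(m_i-2),\,1+p_i(m_i-1)]$ on which the spectrum equals $N/H_i$ is quasi-isometry invariant: its right endpoint, combined with the already-known value of $p_i$, pins down $m_i$, while the jump $\dim V_i-\dim V_i^0$ at the left endpoint gives the number of maximal-size blocks. To reach sub-maximal blocks I would iterate: pushing $\kappa$ past $1+p_i(m_i-1)$ should reveal a nested flag of closed subgroups $K_{i-1}\subset\cdots\subset H_i^{(2)}\subset H_i^{(m_i)}=H_i\subset K_i$ indexed by the distinct block sizes occurring at $\mu_i$, from whose successive topological dimensions one reads the multiplicities. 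Once the full Jordan data match, $\alpha$ and $\lambda\beta$ are conjugate in $\mathrm{GL}(\mathfrak{n})$ by some $g$, and the map $(v,t)\mapsto(g^{-1}v,\lambda t)$ realises the isomorphism $X_\alpha\simeq X_\beta$.

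The main obstacle I anticipate lies in that iterative recovery of sub-maximal block sizes: Theorem \ref{localclassesinfty} as stated resolves $\mathrm{Sp}(A^{p_i,\kappa}_{\mathrm{loc}})$ only through $\kappa=1+p_i(m_i-1)$, so completing the argument requires extending the local computations of Section \ref{hnc} to arbitrarily large $\kappa$ and verifying that the local $\ell^{\,p_i,\kappa}$-cohomology classes shrink monotonically through the conjectured flag. Once that refined dictionary is in place, concatenating the invariants $\lambda$, $(\mu_i,\dim V_i)$, and the full tuple of Jordan block sizes at each $\mu_i$ determines $\alpha$ up to conjugation with $\lambda\beta$, yielding the corollary.
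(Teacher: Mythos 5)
Your first step matches the paper's: the eigenvalue ratios $\mu_j^{(\alpha)}/\tr(\alpha)=\mu_j^{(\beta)}/\tr(\beta)$ and the dimensions $\dim V_j$ are read off from the jump locations and jump sizes of $s(p,\kappa)$ across the critical exponents $p_j=\tr(\alpha)/\mu_j$, and Theorem~\ref{localclassesinfty}(2)--(3) then recovers the maximal block size $m_i$ and the number $\dim V_i^0$ of maximal blocks at each $\mu_i$. Up to that point you are in agreement with the paper's proof.

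The second half of your argument has a genuine gap, and it is exactly the one you flag at the end but underestimate. Your plan to read off the sub-maximal block sizes by pushing $\kappa$ past $1+p_i(m_i-1)$ and watching a nested flag of subgroups $K_{i-1}\subset\cdots\subset H_i\subset K_i$ appear is precisely what the paper says it could \emph{not} carry out: at the end of Section~\ref{resultsHeintze} the author states explicitly that the spectrum of $A^{p,\kappa}_{\mathrm{loc}}$ was not computed for all values of $(p,\kappa)$, and that one ``could expect'' such a refined analysis to recover the full Jordan form. Theorem~\ref{localclassesinfty}(3) only covers the single window $\kappa\in(1+p_i(m_i-2),1+p_i(m_i-1)]$ at $p=p_i$; nothing in the paper identifies the spectrum for larger $\kappa$, so the iterative step your proof relies on is a conjecture, not an available lemma. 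As written, the proposal does not close.

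The paper avoids this via a different mechanism: a double induction on quotient groups. Having shown (Theorem~\ref{localclassesinfty}(3)) that the boundary map of the quasi-isometry preserves the $H_1$-cosets, one descends to a homeomorphism $F_1:\R^n/H_1^{\alpha}\to\R^n/H_1^{\beta}$. Because the $H_1$-cosets are equidistant in the parabolic visual metric, the metric pushes forward to a quotient metric, $F_1$ is a quasi-symmetry for the quotient metrics by \cite[Lemma~15.9]{JeremyTyson01}, and these quotient metrics are bi-Lipschitz to the parabolic visual metrics of the quotient Heintze groups $X_{\alpha_1}$, $X_{\beta_1}$. Crucially, $\alpha_1$ has the same Jordan data as $\alpha$ except that the maximal block at $\mu_1$ drops from size $m_1$ to $m_1-1$. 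The paper then inducts on $m_1$ (via the $H_1$-quotient) and on $d$ (via the $K_1$-quotient), recovering every block size and multiplicity without ever evaluating $\mathrm{Sp}(A^{p,\kappa}_{\mathrm{loc}})$ outside the window proved in Theorem~\ref{localclassesinfty}. That quotient-and-induct device is the missing idea in your proposal.
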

\noindent We refer the reader to \cite[Section 6.B]{Cornulier13} for more details about the quasi-isometric classification of Heintze groups. Notice that Theorem \ref{localclassesinfty} provides new invariants related to the sizes of the Jordan blocks of $\alpha$. We were not able to compute the spectrum of $A^{p,\kappa}_{\mathrm{loc}}$ for all the possible values of $(p,\kappa)$. One could expect to show, by a more refined analysis, that the Jordan form of the derivation $\alpha$, up to scalar multiplication, is a quasi-isometry invariant of the Heintze group, generalizing thus the first conclusion of Corollary \ref{abelian}.

\subsection{Notations and conventions} To make the notation clearer we will write the constants in sans-serif font, e.g.\ $\sf{C}$, $\sf{K}$, etc..\ If $f$ and $g$ are non-negative real functions defined on a set $A$, we say that $f\lesssim g$ if there exists a constant $\sf{C}$, such that $f(a)\leq \sf{C} g(a)$ for all $a\in A$. If both inequalities are true, $f\lesssim g$ and $g\lesssim f$, we say that $f$ and $g$ are comparable and we denote it by $f\asymp g$.

\subsection*{Acknowledgments} I would like to specially thanks Pierre Pansu for all his help and advice during the realization of this work, and for explaining me his works on $L^p$-cohomology. I am also particularly indebted to Romain Tessera for sharing with me his ideas about Orlicz spaces and their applications to Heintze groups. I also thanks Yves de Cornulier for helpful discussions on quasi-isometries of Heintze groups. This work was supported by the ANR project ``GDSous/GSG'' no.\ 12-BS01-0003-01.

\section{Preliminaries on the theory of Orlicz spaces}\label{preOrlicz}

\subsection{Generalities} In this section we recall some basic facts about Orlicz spaces which will be used throughout this articles. We refer to \cite{Rao-Ren91} for a general treatment. Recall that a \emph{Young function} is an even convex function $\phi:\R\to\R_+$ which satisfies $\phi(0)=0$, and $\lim_{t\to\infty}\phi(t)=+\infty$. Note that we require $\phi$ to be finite valued, so in particular, $\phi$ is locally Lipschitz. We will also assume that $\phi(t)\neq 0$ if $t\neq 0$.

\medskip
\noindent Any Young function can be represented as an integral
\begin{equation*}\label{derivative}\phi(t)=\int_0^{|t|}\phi'(s)ds,\end{equation*}
where $\phi':\R_+\to \R_+$ is nondecreasing left continuous and $\phi'(0)=0$ \cite[Cor.\ 2 Ch.\ 1]{Rao-Ren91}. The function $\phi'$ coincides with the derivative of $\phi$ except perhaps for at most a countable number of points.

\medskip
\noindent Let the space $\Omega$ be given with a $\sigma$-algebra and a $\sigma$-finite measure $\mu$. For any measurable function $f$ on $\Omega$, the \emph{Luxembourg} norm of $f$ is defined as
$$\|f\|_\phi=\inf\left\{\alpha>0:\int_{\Omega}\phi\left(\frac{f}{\alpha}\right)d\mu\leq 1\right\}\in[0,+\infty],$$
where it is understood that $\inf(\emptyset)=+\infty$. The \emph{Orlicz space} $L^\phi(\Omega,\mu)$ is the vector space of measurable functions $f$ on $\Omega$ such that $\|f\|_{\phi}<\infty$ \cite[Thm.\ 10 Ch.\ 3]{Rao-Ren91}. Up to almost everywhere null functions, $L^\phi(\Omega,\mu)$ is a Banach space with the norm $\|\cdot\|_\phi$. As usual, we simply write $\ell^\phi(\Omega)$ when $\mu$ is the counting measure.

\medskip
\noindent If $\sf{K}\geq 1$ is any constant, the identity map $L^{\sf{K}\phi}(\Omega,\mu)\to L^\phi(\Omega,\mu)$ is continuous and bijective, and therefore, by the open mapping theorem, $\|\cdot\|_{\sf{K}\phi}$ and $\|\cdot\|_\phi$ are equivalent norms:
\begin{equation}\label{equiv}\exists\, \sf{C}=\sf{C}(\sf{K},\phi)\text{ s.t. } \|\cdot\|_\phi\leq \|\cdot\|_{\sf{K}\phi}\leq \sf{C}\|\cdot\|_{\phi}.\end{equation}
Let us point out that when the measure $\mu$ is finite, the Orlicz space $L^\phi(\Omega,\mu)$ is contained in  $L^1(\Omega,\mu)$ and the inclusion is continuous \cite[Cor.\ 3 Ch.\ 1]{Rao-Ren91}. In particular, in locally compact spaces equipped with a regular measure, Orlicz integrable functions are locally integrable.

\medskip
\noindent We will not need the H\"older inequality in this paper, but let us remark that it holds exactly as in the ordinary case by considering $\psi:\R\to [0,+\infty]$ the convex conjugate of $\phi$ \cite[Thm.\,3 Ch.\, 1]{Rao-Ren91}. This inequality serves also to define the Orlicz spaces by means of the so called Orlicz norm instead of the Luxembourg norm.

\medskip
\noindent The main difficulty in dealing with the norm $\|\cdot\|_{\phi}$ is that it is not comparable to the function
\begin{equation}\label{difficulty}
f\mapsto \phi^{-1}\left(\int_\Omega\phi(f)du\right).
\end{equation}
This is only the case when $\phi$ is equivalent to an ordinary power function $\phi_p$, $p\geq 1$. Nevertheless, we will be able to avoid this difficulty by applying Jensen's inequality in order to exchange $\phi$ and an integral or a sum symbol.

\subsection{Decay conditions}
For the Orlicz spaces considered here, only the decay properties of $\phi$ for small values of $t$ will be relevant for us. Roughly speaking, we do not want our Young functions to be too small near zero. This is precisely the meaning of Definition \ref{growth}, and we will mainly work in this article with doubling Young functions.

\medskip
\noindent Here are some important features of Orlicz spaces of doubling Young functions. First, notice that since $\phi'$ is non-decreasing, we have
$$\frac{t\phi'(t)}{\phi(t)}\geq 1\text{ for all } t>0.$$
A useful way to check the doubling property is by considering the exponent
$$p_{\phi}=\limsup_{t\to 0}\frac{t\phi'(t)}{\phi(t)}\in [1,+\infty].$$
Then, $\phi$ is doubling if and only if $p_{\phi}<+\infty$ \cite[Cor.\ 4 Ch.\ 2]{Rao-Ren91}. It is important to note that when $\phi$ is doubling, then 
$$f\in L^\phi(\Omega,\mu) \text{ if, and only if, }\int_\Omega\phi(f)d\mu<\infty,$$
see \cite[Thm.\ 2 Ch.\ 3]{Rao-Ren91}. Moreover, when $\phi$ is doubling, if $\{f,f_n: n\in \N\}$ is a sequence in $L^\phi(\Omega,\mu)$, then
$$\|f_n-f\|_\phi\to 0 \text{ if, and only if, } \int_\Omega \phi(f_n-f)du\to 0,$$
see \cite[Thm.\,12, Ch.\,3]{Rao-Ren91}. In particular, when $\mu$ is a Radon (resp. volume) measure on a locally compact Hausdorff space (resp. Riemannian manifold) $\Omega$, then the usual approximation by continuous (resp. smooth) functions in the $\|\cdot\|_\phi$ norm holds.

\medskip
\noindent A doubling Young function has polynomial decay with exponent $p$ for any $p>p_{\phi}$. That is, there exist $t_0>0$, and $\sf{c}>0$, such that $\phi(t)\geq \sf{c}\,t^p \text{ for } t\in[0,t_0]$. In general, the converse is not true. We also remark that when $\phi$ is doubling of constant $\sf{K}$, then $t\phi'(t)\leq \sf{K}\phi(t)$ for all $t\in [0,t_0]$.

\medskip
\noindent All these conditions are imposed for small values of $t$. When dealing with discrete Orlicz spaces, the behavior of $\phi$ at infinity is essentially irrelevant. To justify this, let us introduce the following equivalence relation among Young functions.

\medskip
\noindent Define $\phi_1 \preceq \phi_2$ if there are constants $\sf{a},\sf{b}\in\R_+$ and $t_0>0$, so that $\phi_2(t)\leq \sf{a}\phi_1(\sf{b}t)$ for $t\leq t_0$. We say that $\phi_1\sim \phi_2$ for small $t$, if $\phi_1\preceq \phi_2$ and $\phi_2\preceq \phi_1$.

\begin{lemma}\label{equivalentphi}
Suppose that $\Omega$ is a countable set, and let $\phi_i$, $i=1,2$, be a pair of equivalent Young functions for small $t$. Then the norms $\|\cdot\|_{\phi_1}$ and $\|\cdot\|_{\phi_2}$ are equivalent.
\end{lemma}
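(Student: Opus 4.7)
My plan is to exploit the fact that on a countable set with counting measure, a finite $\phi_1$-Luxemburg norm bounds $f$ uniformly: if $\alpha>\|f\|_{\phi_1}$ then $\sum_n\phi_1(f(n)/\alpha)\leq 1$, so every individual summand is $\leq 1$ and hence $|f(n)/\alpha|\leq M:=\phi_1^{-1}(1)$. (Convexity, evenness, $\phi_1(0)=0$ and $\phi_1(t)\neq 0$ for $t\neq 0$ force $\phi_1$ to be strictly increasing on $[0,\infty)$, so $M$ is a well-defined finite positive number.) This $\ell^\infty$-bound on the rescaled sequence is the mechanism that lets me place all the values $f(n)/(\beta\alpha)$ into the small-argument regime $|t|\leq t_0$ where the hypothesis $\phi_1\preceq\phi_2$ actually bites.

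Concretely, the hypothesis $\phi_1\preceq\phi_2$ yields constants $\sf{a},\sf{b},t_0>0$ with $\phi_2(t)\leq\sf{a}\phi_1(\sf{b}t)$ for $|t|\leq t_0$. I would then pick a scaling $\beta\geq\max\{\sf{b},M/t_0\}$. For every $n$, the argument $t_n:=f(n)/(\beta\alpha)$ lies in $[-t_0,t_0]$, and since $\sf{b}/\beta\leq 1$ while $\phi_1$ is non-decreasing on $[0,\infty)$, I get
\[\phi_2(t_n)\leq \sf{a}\,\phi_1(\sf{b}t_n)\leq \sf{a}\,\phi_1(f(n)/\alpha).\]
Summation yields $\sum_n\phi_2(f(n)/(\beta\alpha))\leq\sf{a}$. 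To compress this below $1$, I invoke convexity of $\phi_2$ together with $\phi_2(0)=0$, which give $\phi_2(t/k)\leq\phi_2(t)/k$ for every $k\geq 1$. Choosing $k=\max\{1,\sf{a}\}$ produces $\sum_n\phi_2(f(n)/(k\beta\alpha))\leq 1$, i.e.\ $\|f\|_{\phi_2}\leq k\beta\alpha$, and letting $\alpha\searrow\|f\|_{\phi_1}$ gives $\|f\|_{\phi_2}\leq k\beta\|f\|_{\phi_1}$. The reverse inequality follows from $\phi_2\preceq\phi_1$ by swapping the roles of the two functions.

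The only real obstacle is a minor two-step rescaling: one dilation is needed to enter the regime $|t|\leq t_0$ where the comparison of $\phi_1$ and $\phi_2$ holds, and a second dilation is needed to absorb the multiplicative constant $\sf{a}$ produced by that comparison. No doubling assumption or other decay condition on the $\phi_i$ is required, and the behavior of $\phi_i$ at infinity is indeed irrelevant — this is precisely because countability upgrades $\phi_i$-summability of rescaled values into an $\ell^\infty$ bound on them.
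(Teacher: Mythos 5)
Your proof is correct, and I see no gaps. The paper states this lemma without proof, treating it as routine, so there is no argument in the text to compare against; but your reasoning is the natural one. The essential observation — that on a countable set with counting measure, finiteness of $\sum_n \phi_1(f(n)/\alpha)$ forces each summand to be $\leq 1$, hence $|f(n)/\alpha|\leq \phi_1^{-1}(1)$ uniformly in $n$, which places all rescaled values into the regime $|t|\leq t_0$ where $\phi_1\preceq\phi_2$ applies — is exactly why the lemma is true for discrete $\Omega$ and false in general. The two-step dilation (one factor to enter $[-t_0,t_0]$, another to absorb the constant $\sf{a}$ via convexity and $\phi_2(0)=0$) is handled cleanly, and the preliminary facts you use (strict monotonicity of $\phi_1$ on $[0,\infty)$, well-definedness of $\phi_1^{-1}(1)$, the fact that $\alpha>\|f\|_{\phi_1}$ implies the modular is $\leq 1$) all follow from the paper's standing assumptions that $\phi$ is finite-valued, vanishes only at $0$, and tends to $+\infty$.
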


\noindent In particular, given a doubling Young function $\phi$, applying Lemma \ref{equivalentphi} if necessary, we can change $\phi$ by an equivalent (for small $t$) Young function, so that the doubling condition \ref{growth} is satisfied with $t_0=+\infty$.

\medskip
\noindent The following lemma will be used later in Section \ref{boundary}.

\begin{lemma}\label{lowerdoubling}
Let $\phi$ be a doubling Young function with constants $\sf{K}$ and $t_0=+\infty$. Then for all $y\in \R_+$ and $x\in[0,1]$ we have $x^\sf{K}\phi(y)\leq \phi(xy)$.
\end{lemma}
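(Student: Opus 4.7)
The plan is to exploit the infinitesimal consequence of the doubling condition that the text has already recorded: when $\phi$ is doubling with constants $\mathsf{K}$ and $t_0=+\infty$, one has $t\phi'(t)\leq \mathsf{K}\phi(t)$ for every $t>0$. Since the inequality to be proved is trivial when $x=0$ or $y=0$ (both sides vanish, using $\phi(0)=0$), it is enough to treat $x\in(0,1]$ and $y>0$.

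The key observation is that the bound $t\phi'(t)\leq \mathsf{K}\phi(t)$ can be recast as a bound on the logarithmic derivative of $\phi$, namely
\[
\frac{\phi'(s)}{\phi(s)}\leq \frac{\mathsf{K}}{s}\quad\text{for a.e.\ } s>0.
\]
Here we use that $\phi$ is strictly positive away from the origin (by the standing assumption $\phi(t)\neq 0$ for $t\neq 0$) and is absolutely continuous on compact intervals (being a finite-valued convex function), so that $\log\phi$ is absolutely continuous on compact subsets of $(0,\infty)$ with derivative $\phi'/\phi$ almost everywhere. Integrating the above pointwise bound over $[xy,y]$ via the fundamental theorem of calculus for absolutely continuous functions yields
\[
\log\phi(y)-\log\phi(xy)\;=\;\int_{xy}^{y}(\log\phi)'(s)\,ds\;\leq\;\int_{xy}^{y}\frac{\mathsf{K}}{s}\,ds\;=\;\mathsf{K}\log\!\frac{1}{x},
\]
and rearranging gives exactly $x^{\mathsf{K}}\phi(y)\leq \phi(xy)$.

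I do not foresee any genuine obstacle in this argument; the only point requiring a moment's care is the justification of the almost-everywhere bound on $(\log\phi)'$, which however follows immediately from the stated inequality $t\phi'(t)\leq\mathsf{K}\phi(t)$ (now valid for all $t>0$ since $t_0=+\infty$) together with the chain rule for absolutely continuous compositions. A parallel presentation that avoids logarithms is to set $f(x):=x^{-\mathsf{K}}\phi(xy)$ on $(0,1]$, check by a direct computation that
\[
f'(x)\;=\;x^{-\mathsf{K}-1}\bigl(xy\,\phi'(xy)-\mathsf{K}\,\phi(xy)\bigr)\;\leq\;0\quad\text{a.e.\ on }(0,1],
\]
and conclude that $f$ is non-increasing, so $f(x)\geq f(1)=\phi(y)$. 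Either variant leads to the claim without further technicalities.
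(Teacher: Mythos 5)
Your proof is correct and follows essentially the same approach as the paper's: both integrate the bound $\phi'(t)/\phi(t)\leq\sf{K}/t$ over $[xy,y]$ to compare $\phi(xy)$ with $\phi(y)$ (the paper phrases it via $\tau_x(y)=\phi(xy)/\phi(y)$ rather than $\log\phi$, but the computation is identical). Your alternative via the non-increasing function $f(x)=x^{-\sf{K}}\phi(xy)$ is a mild repackaging of the same idea.
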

\begin{proof}
For each $x\in [0,1]$, define the function $\tau_x:\R_+\to\R_+$ by $\tau_x(y)=\phi(xy)/\phi(y)$. Notice that $\tau_x(y)\leq 1$ because $x\leq 1$ and $\phi$ is increasing. Since $t\phi'(t)\leq \phi(t)$ for all $t\in\R_+$, we have
$$\log\left(\frac{1}{\tau_x(y)}\right)=\int_{xy}^y\frac{\phi'(t)}{\phi(t)}dt\leq \int_{xy}^y\frac{\sf{K}}{t}dt=\log\left(\frac{1}{x^\sf{K}}\right).$$
That is, $\tau_x(y)\geq x^\sf{K}$. This finishes the proof.
\end{proof}

\section{Orlicz cohomology and quasi-isometry invariance\label{qiinvariance}}

\noindent In this section, we extend the classical notion of $L^p$-cohomology by considering the larger class of Orlicz spaces. It can be defined as a simplicial, coarse, or De Rham cohomology, depending on the structure of the space under consideration. For our later applications in Section \ref{hnc}, it will be more natural to work with differential forms. Nevertheless, the quasi-isometry invariance is easier to prove in the simplicial context.

\medskip
\noindent In contrast to the $L^p$ case, the identification of the discrete and continuous cohomologies is less clear for a general Young function. We will prove it for the degree one cohomology, by using the coarse definition as an intermediary (see Section \ref{coarsec}).

\subsection{The simplicial Orlicz chomology}\label{simplicial}

\noindent We first prove the quasi-isometry invariance of the simplicial Orlicz cohomology. It is remarkable that this result is true for any Young function $\phi$, and therefore, in great generality. The proof follows the same lines as the classical proof for the $\ell^p$ case, see \cite{Bourdon-Pajot04}.
 
\medskip
\noindent Let $X$ be a finite dimensional, uniformly contractible, simplicial complex with bounded geometry, see the Introduction for the definition. We denote the geodesic distance on $X$ by $|\cdot-\cdot|$. For $k\in \N$, let $X_k$ be the set of $k$-simplices of $X$, and let $C_k(X)$ be the vector space of $k$-chains on $X$, i.e.\ finite real linear combinations of the elements in $X_k$.

\medskip
\noindent We will use the following notations: if $c$ is a chain in $X$, the length of $c$, denoted by $\ell(c)$, is the number of simplices in $\supp(c)$, $\|c\|_\infty=\max |c(\sigma)|$, and $\|c\|_1=\sum|c(\sigma)|$. Note that our assumptions imply that there exists a function $\sf{N}_X:[0,+\infty)\to\N$ such that any ball of radius $r\geq 0$ contains at most $\sf{N}_X(r)$ simplices. We write $\sf{N}=\sf{N}_X(1)$ for short.

\medskip
\noindent Let $\phi$ be a Young function. The \emph{$k$-th space of $\phi$-integrable cochains} of $X$ is by definition the Banach space $\Lp^\phi(X_k)$; i.e.\ the space of functions $\omega:X_k\to \R$ such that
$$\|\omega\|_\phi:=\inf\left\{\alpha>0:\sum_{\sigma\in X_k}\phi\left(\frac{\omega(\sigma)}{\alpha}\right)\leq 1\right\}<\infty.$$
The standard coboundary operator $\delta_k:\Lp^\phi(X_k)\to \Lp^\phi(X_{k+1})$ is defined by duality: for $\omega\in \Lp^\phi(X_k)$ and $\sigma\in X_{k+1}$, let $\delta_k(\omega)(\sigma)=\omega(\partial \sigma)$.

\medskip
\noindent By the convexity of $\phi$, and the bounded geometry assumption, $\delta_k$ is well defined and is a bounded operator. The proof relies on the following argument which will be used several times in this article. Let us explain it in detail.

\medskip
\noindent Let $\omega\in \Lp^\phi(X_k)$ and $\sigma\in X_{k+1}$, and suppose that $\partial\sigma\neq 0$. Note that for any $\alpha>0$, we have by convexity
\begin{align}\label{keyarg}
\phi\left(\frac{\delta_k(\omega)(\sigma)}{\alpha \sf{N}}\right)&\leq \phi\left(\frac{\delta_k(\omega)(\sigma)}{\alpha \|\partial\sigma\|_1}\right)
\leq\frac{1}{\|\partial\sigma\|_1}\sum_{\sigma'\in \supp(\partial\sigma)}\phi\left(\frac{\omega(\sigma')}{\alpha}\right).
\end{align}
Summing over $\sigma\in X_{k+1}$, we obtain
\begin{align*}
\sum_{\sigma\in X_{k+1}}\phi\left(\frac{\delta_k(\omega)(\sigma)}{\alpha \sf{N}}\right)
&\leq \sf{N}\sum_{\sigma'\in X_k}\phi\left(\frac{\omega(\sigma')}{\alpha}\right).
\end{align*}
By (\ref{equiv}), $\|\omega\|_{\sf{N}\phi}\lesssim \|\omega\|_\phi<\infty$. In particular, the set of $\alpha>0$ such that the sum on the right hand side is bounded from above by $\sf{N}^{-1}$ is not empty, and we can take $\alpha$ arbitrarily close to $\|\omega\|_{\sf{N}\phi}$. For such an $\alpha$, we have
\begin{align}\label{keyarg2}
\sum_{\sigma\in X_{k+1}}\phi\left(\frac{\delta_k(\omega)(\sigma)}{\alpha \sf{N}}\right)&\leq 1,
\end{align}
which implies $\|\delta_k(\omega)\|_\phi\leq \alpha$. Taking infimum, we get $\|\delta_k(\omega)\|_\phi\leq \sf{N}\|\omega\|_{\sf{N}\phi}$. Therefore, $\|\delta_k(\omega)\|_\phi\lesssim \|\omega\|_\phi$, where the multiplicative constant depends only on $\sf{N}$ and $\phi$.

\begin{definition}\label{cohomology}
The \emph{$k$-th $\Lp^\phi$-cohomology space} of $X$ is by definition the topological vector space
\begin{align*}\Lp^\phi H^k(X):=\mathrm{Ker}\,\delta_k/\mathrm{Im}\,\delta_{k-1}.\end{align*}
The \emph{reduced} cohomology is defined by taking the quotient by $\overline{\mathrm{Im}\,\delta_{k-1}}$; it is a Banach space.
\end{definition}

\noindent We focus now on the quasi-isometry invariance of the $\Lp^\phi$-cohomology spaces. Recall that a function $F:X\to Y$ between two metric spaces is:
\begin{enumerate} 
\item \emph{quasi-Lipschitz} if there exist constants $\sf{\Lambda}\geq 1$ and $\sf{C}\geq0$ such that
\begin{align*}
\forall\ x,y\in X,\ |F(x)-F(y)|\leq \sf{\Lambda} |x-y| +\sf{C}.
\end{align*}
\item \emph{uniformly proper} if there exists a function $\sf{D}_F:\R_+\to\R_+$ such that for any ball $B(y,r)$ in $Y$, we have $\diam\left(F^{-1}(B(y,r))\right)\leq \sf{D}_F(r)$.
\medskip
\item a \emph{quasi-isometry} if it is quasi-Lipschitz and there exists a quasi-Lipschitz function $G:Y\to X$ such that $G\circ F$ and $F\circ G$ are at bounded distance from the identity.
\end{enumerate}

\noindent We can now state the main result of this section.

\begin{theorem}[Quasi-isometry invariance]\label{invariance}
Let $\phi$ be any Young function, and $X$, $Y$ be two uniformly contractible simplicial complexes with bounded geometry.
\begin{enumerate}
\item Any quasi-Lipschitz uniformly proper function $F:X\to Y$ induces continuous linear maps $F^*:\Lp^\phi H^\bullet(Y)\to \Lp^\phi H^\bullet(X)$.
\item If $F,G:X\to Y$ are two quasi-Lipschitz uniformly proper functions at bounded distance, then $F^*=G^*$.
\item If $F:X\to Y$ is a quasi-isometry, then $F^*$ is an isomorphism of topological vector spaces.
\end{enumerate}
The same statements hold for the reduced $\Lp^\phi$-cohomology.
\end{theorem}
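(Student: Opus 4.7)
The plan is to adapt the classical $\ell^p$-cohomology proof (as in Bourdon--Pajot) to the Orlicz setting, replacing every use of the sub-additivity of $|\cdot|^p$ by the convexity trick already exploited in (\ref{keyarg})--(\ref{keyarg2}). First, replace the map $F:X\to Y$, which is only defined at the metric level, by a simplicial approximation $\tilde F:X\to Y$ at bounded distance from $F$. This is done inductively on skeleta using the uniform contractibility of $Y$: pick $\tilde F(v)\in Y_0$ close to $F(v)$ for each $v\in X_0$, and extend simplicially over each higher-dimensional simplex, the extension being possible because quasi-Lipschitzness of $F$ forces the images of the vertices of any $\sigma\in X_k$ to lie in a ball of uniformly bounded radius $\mathsf R$. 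The resulting chain map $\tilde F_{\#}:C_k(X)\to C_k(Y)$ satisfies, for all $k$, uniform bounds $\ell(\tilde F_{\#}\sigma)\leq \mathsf M$ and $\|\tilde F_{\#}\sigma\|_\infty\leq \mathsf c$; moreover, by uniform properness of $F$ together with the bounded geometry of $X$, each $\sigma'\in Y_k$ lies in $\supp(\tilde F_{\#}\sigma)$ for at most $\mathsf M'$ simplices $\sigma\in X_k$.

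Next, one shows that the dual operator $\tilde F^{*}\omega := \omega\circ\tilde F_{\#}$ is continuous on $\ell^\phi$. For $\omega\in\ell^\phi(Y_k)$ and $\sigma\in X_k$, the coefficient and length bounds combined with convexity of $\phi$ give, exactly as in (\ref{keyarg}),
\[
\phi\!\left(\frac{(\tilde F^{*}\omega)(\sigma)}{\mathsf c\mathsf M\alpha}\right)\leq \frac{1}{\mathsf M}\sum_{\sigma'\in\supp(\tilde F_{\#}\sigma)}\phi\!\left(\frac{\omega(\sigma')}{\alpha}\right).
\]
Summing over $\sigma\in X_k$ and using the multiplicity bound $\mathsf M'$ on preimages yields $\sum_{\sigma} \phi\bigl((\tilde F^{*}\omega)(\sigma)/(\mathsf c\mathsf M\alpha)\bigr)\leq \mathsf M'\sum_{\sigma'}\phi(\omega(\sigma')/\alpha)$; choosing $\alpha$ close to $\|\omega\|_{\mathsf M'\phi}$ and invoking the norm equivalence (\ref{equiv}) gives $\|\tilde F^{*}\omega\|_\phi \lesssim \|\omega\|_\phi$. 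Since $\tilde F_{\#}$ is a chain map, $\tilde F^{*}$ commutes with $\delta$ and descends to a continuous map $F^{*}$ on $\ell^\phi H^\bullet$ and on its reduced version. Assertion (1) follows, once (2) assures us that $F^{*}$ does not depend on the chosen approximation.

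For (2), construct a prism-type chain homotopy $P_k:C_k(X)\to C_{k+1}(Y)$ with $\partial P+P\partial=\tilde F_{\#}-\tilde G_{\#}$, where $\tilde F,\tilde G$ are simplicial approximations of two maps $F,G$ at bounded distance. One fills each prism $\sigma\times[0,1]$ by a simplicial chain supported in a uniformly bounded ball of $Y$, again via uniform contractibility of $Y$. The same bookkeeping as above yields $\ell(P_k\sigma)\leq \mathsf R$, $\|P_k\sigma\|_\infty\leq \mathsf R$, and a uniform multiplicity bound on preimages; the convexity computation then shows $P^{*}:\ell^\phi(Y_{k+1})\to\ell^\phi(X_k)$ is bounded and satisfies $\delta P^{*}+P^{*}\delta=\tilde F^{*}-\tilde G^{*}$, so $F^{*}=G^{*}$ on (reduced) cohomology. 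Part (3) is then formal: if $G$ is a quasi-inverse of $F$, both $F\circ G$ and $G\circ F$ are at bounded distance from the respective identities, so $(F\circ G)^{*}=\id$ and $(G\circ F)^{*}=\id$ by (2); functoriality of the simplicial construction gives $(F\circ G)^{*}=G^{*}F^{*}$ and $(G\circ F)^{*}=F^{*}G^{*}$, so $F^{*}$ is an isomorphism with inverse $G^{*}$. The main obstacle throughout is the absence of a direct sub-additivity estimate $\phi(\sum a_i)\leq \sum\phi(a_i)$ and the fact that $\|\cdot\|_\phi$ is not comparable to $\phi^{-1}(\sum\phi(\cdot))$: everything hinges on first dividing the inner sum by the number of its terms before applying Jensen, and then absorbing the resulting factor into the Luxemburg norm with respect to $\mathsf K\phi$ via (\ref{equiv}).
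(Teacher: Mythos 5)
Your proposal is correct and follows essentially the same approach as the paper, which cites Lemmas \ref{pullback} and \ref{homotopy} of Bourdon--Pajot for the uniformly bounded chain map $c_F$ and chain homotopy $h$, and then obtains $\ell^\phi$-boundedness of the dual maps via convexity of $\phi$, Jensen's inequality, and the norm equivalence (\ref{equiv}), exactly as you do. One small imprecision: the phrase ``extend simplicially'' should really be ``fill with a chain via uniform contractibility,'' since the $\tilde F$-images of the vertices of a simplex of $X$ need not span a simplex of $Y$, so the construction produces a chain map $X_\bullet\to C_\bullet(Y)$ rather than a simplicial map; this is clearly what you intend given the bounds you state on $\ell(\tilde F_\#\sigma)$ and $\|\tilde F_\#\sigma\|_\infty$.
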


\noindent The proof relies on the following two key lemmas which are proved in \cite[Section 1]{Bourdon-Pajot04}. They serve to define pull-backs and homotopies from quasi-isometries.

\begin{lemma}[\cite{Bourdon-Pajot04}]\label{pullback}
Let $X$ and $Y$ be two uniformly contractible simplicial complexes with bounded geometry. Any quasi-Lipschitz uniformly proper function $F:X\to Y$ induces maps $c_F:X_\bullet\to C_\bullet(Y)$ verifying the following conditions:
\begin{enumerate}
\item $c_F$ commutes with the boundary operator, i.e. $c_F(\partial \sigma)=\partial c_F(\sigma)$, and 
\item for each $k\in \N$, there are constants $\sf{N}_k$ and $\sf{L}_k$, depending only on $k$ and the geometric data of $X,\ Y$ and $F$, such that
$$\|c_F(\sigma)\|_\infty\leq \sf{N}_k,\ \text{and}\ \ell(c_F(\sigma))\leq \sf{L}_k.$$
\end{enumerate}
\end{lemma}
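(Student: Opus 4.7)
The plan is to construct the chain maps $c_F:X_\bullet\to C_\bullet(Y)$ by induction on the simplicial dimension $k$, taking integer-valued fillings at each step and using the uniform contractibility of $Y$ to guarantee their existence.

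For $k=0$, since every point of $Y$ lies in a simplex of diameter at most $1$, I pick for each vertex $v\in X_0$ a vertex $c_F(v)\in Y_0$ with $|c_F(v)-F(v)|\leq 1$; condition (2) holds with $\sf{N}_0=\sf{L}_0=1$, and (1) is vacuous. For the inductive step, assume $c_F$ has been defined on $X_{\leq k-1}$, takes integer values, and satisfies (1) and (2). Given $\sigma\in X_k$, the chain $c_F(\partial\sigma)$ satisfies $\partial c_F(\partial\sigma)=c_F(\partial^2\sigma)=0$, so it is an integer $(k-1)$-cycle. Since $\sigma$ has diameter at most $1$ and $F$ is quasi-Lipschitz, $F(\sigma)$ has diameter at most $\sf{\Lambda}+\sf{C}$; combined with the inductive length bound $\sf{L}_{k-1}$, the support of $c_F(\partial\sigma)$ lies in a ball $B(y_\sigma,R_k)$ whose radius $R_k$ depends only on $\sf{\Lambda}$, $\sf{C}$, and the inductively constructed constants. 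Uniform contractibility of $Y$ provides $R'_k=R'_k(R_k)$ such that this cycle bounds an integer $k$-chain supported in $B(y_\sigma,R'_k)$, and I set $c_F(\sigma)$ to be such a filling. Condition (1) holds by construction.

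The length bound $\ell(c_F(\sigma))\leq \sf{L}_k$ follows from bounded geometry of $Y$, which caps the number of simplices in $B(y_\sigma,R'_k)$. The main obstacle is controlling $\|c_F(\sigma)\|_\infty$ uniformly over all $\sigma$, since a priori different fillings could carry arbitrarily large coefficients. Here my plan is a finiteness argument. By the induction hypothesis and the triangle inequality (applied to the $k+1$ faces of $\sigma$), we have $\|c_F(\partial\sigma)\|_\infty\leq (k+1)\sf{N}_{k-1}$. By bounded geometry, up to simplicial isomorphism fixing a basepoint vertex, there are only finitely many balls of radius $R'_k$ in $Y$; for each such model ball, the number of integer cycles supported in the sub-ball $B(\cdot,R_k)$ with sup-norm at most $(k+1)\sf{N}_{k-1}$ is finite, since both the potential support and the set of possible integer coefficients are finite. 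For each pair (model ball, model cycle) fix once and for all a filling of minimum sup-norm, transfer it back to $B(y_\sigma,R'_k)$ via an isomorphism, and let $\sf{N}_k$ be the maximum of these sup-norms over the resulting finite list. This finishes the inductive step and yields the required uniform bounds $\sf{N}_k,\sf{L}_k$, depending only on $k$, the quasi-Lipschitz constants of $F$, and the geometric data of $X$ and $Y$.
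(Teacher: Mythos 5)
Your proof is correct and follows essentially the same inductive construction that Bourdon--Pajot use (the paper itself only sketches it in one line, deferring to their Section~1). The three ingredients you isolate --- pick a nearby vertex in degree $0$, fill the pushed-forward boundary cycle in a uniformly larger ball using uniform contractibility of $Y$, and then invoke bounded geometry to reduce to finitely many simplicial-isomorphism types of pointed balls in order to get a uniform $\|\cdot\|_\infty$ bound on the chosen fillings --- are exactly what make the lemma work, and you correctly recognize that uniform contractibility alone controls the support of the filling but not its coefficients, which is the one genuinely nontrivial point. Two small remarks: you should anchor the ball $B(y_\sigma,R_k)$ at an explicit basepoint (say $c_F(v)$ for a vertex $v$ of $\sigma$) and record inductively that $\supp\,c_F(\tau)$ lies in a ball of uniformly bounded radius around such a basepoint, since the length bound $\ell(c_F(\tau))\leq\mathsf{L}_{k-1}$ by itself does not cap the diameter of the support; and in the finiteness step one should restrict attention to those model cycles that actually bound in the model ball (the others never occur as $c_F(\partial\sigma)$, by uniform contractibility together with the equivalence of simplicial and singular homology with $\Z$-coefficients). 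Neither is a gap, just a detail worth writing down.
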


\noindent The map $c_F:X_\bullet\to C_\bullet(Y)$ is constructed by induction on $k$. For $k=0$ and $\sigma\in X_0$,  $c_F(\sigma)$ is defined to be any vertex of $Y$ at uniformly bounded distance from $F(\sigma)$. The induction can be carried out since $X$ is uniformly contractible.

\begin{lemma}[\cite{Bourdon-Pajot04}]\label{homotopy}
Let $F,G:X\to Y$ be two quasi-Lipschitz uniformly proper functions at bounded uniform distance. Then there exists a homotopy $h:X_\bullet\to C_{\bullet+1}(Y)$ between $c_F$ and $c_G$. That is, a map verifying
\begin{enumerate}
\item[(i)] for $\sigma\in X_0$, $\partial h(\sigma)=c_F(\sigma)-c_G(\sigma)$, and
\item[(ii)] for $\sigma\in X_k$, $k\geq 1$, $\partial h(\sigma)+h(\partial\sigma)=c_F(\sigma)-c_G(\sigma)$.
\end{enumerate}
As before, $\|h(\sigma)\|_\infty$ and $\ell(h(\sigma))$ are uniformly bounded on $X_k$ by constants $\sf{N}'_k$ and $\sf{L}_k'$, depending only on the geometric data of $X,\ Y,\ F$ and $G$.
\end{lemma}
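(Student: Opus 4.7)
The plan is to construct $h$ by induction on simplicial dimension, mirroring the construction of $c_F$ in Lemma \ref{pullback}, but now aimed at producing a chain homotopy rather than a chain map. The driving principle is the same in both proofs: whenever one needs to fill a cycle by a chain of one higher dimension, uniform contractibility of $Y$ together with the bounded geometry assumption provides such a filling with uniformly bounded support. Note that no property of the Young function $\phi$ enters the construction of $h$ itself; $\phi$ is relevant only for the eventual norm estimates on the induced cochain map, which will follow from the convexity/Jensen argument already used in (\ref{keyarg})--(\ref{keyarg2}).

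For the base case $k=0$, take $\sigma \in X_0$. Since $F$ and $G$ lie at bounded distance, the vertices $c_F(\sigma)$ and $c_G(\sigma)$ of $Y$ lie in a common ball $B(y_\sigma,\sf{R}_0)$ with $\sf{R}_0$ depending only on $F$, $G$ and the earlier choices for $c_F,c_G$. Uniform contractibility then yields a $1$-chain $h(\sigma)\in C_1(Y)$ with support in a larger ball $B(y_\sigma,\sf{R}'_0)$ and with $\partial h(\sigma)=c_F(\sigma)-c_G(\sigma)$; bounded geometry bounds $\ell(h(\sigma))$ by $\sf{N}_Y(\sf{R}'_0)$, and one can arrange $\|h(\sigma)\|_\infty$ to be an absolute constant (for instance $1$) by choosing the filling to be a simplicial path between the two vertices.

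Inductively, suppose $h$ has been defined on $X_j$ for all $j<k$ with the required identities and uniform bounds. Given $\sigma\in X_k$, consider the $k$-chain
$$z_\sigma := c_F(\sigma)-c_G(\sigma)-h(\partial\sigma).$$
Using that $c_F$ and $c_G$ commute with $\partial$, the inductive identity on each face of $\sigma$, and $\partial^{2}=0$, one computes $\partial z_\sigma = c_F(\partial\sigma)-c_G(\partial\sigma)-\partial h(\partial\sigma) = 0$, so $z_\sigma$ is a $k$-cycle. Combining Lemma \ref{pullback} with the inductive support bounds on $h(\partial\sigma)$, and using again that $F,G$ lie at bounded distance, one sees that $\mathrm{supp}(z_\sigma)$ is contained in a ball $B(y_\sigma,\sf{R}_k)$ whose radius depends only on $k$ and the prior geometric data. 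Uniform contractibility then produces a $(k+1)$-chain $h(\sigma)\in C_{k+1}(Y)$ with $\partial h(\sigma)=z_\sigma$ and with support in a ball $B(y_\sigma,\sf{R}'_k)$; bounded geometry turns this into uniform bounds $\ell(h(\sigma))\leq \sf{N}_Y(\sf{R}'_k)$ and $\|h(\sigma)\|_\infty\leq \sf{N}'_k$, depending only on $k$, $X$, $Y$, $F$ and $G$.

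The main obstacle, already present in the $L^p$ treatment of \cite{Bourdon-Pajot04}, is the propagation of uniform bounds up the dimension ladder: one has to ensure that $\sf{R}_k$ (and hence $\sf{R}'_k$, $\sf{N}'_k$, $\sf{L}_k'$) does not depend on the particular $\sigma$, and that the fillings can be made coherently rather than merely locally. What makes this work is the interplay between the two hypotheses on $Y$: uniform contractibility gives, at each dimension $k$, a uniform ``radius of contraction'' turning $\sf{R}_k$ into $\sf{R}'_k$, while bounded geometry converts a bounded radius into a bounded number of simplices, yielding the $\ell$ and $\|\cdot\|_\infty$ bounds claimed in the statement. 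Once $h$ is constructed, the induced map on cochains is bounded between Orlicz spaces by exactly the same convexity argument as for $\delta_k$, which confirms that the map $F^\ast-G^\ast$ is nullhomotopic in $\ell^\phi H^\bullet$ and underlies the identity $F^\ast=G^\ast$ in Theorem \ref{invariance}(2).
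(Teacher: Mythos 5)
Your proof is correct and follows the same inductive strategy that the paper attributes to Bourdon--Pajot and sketches only for $k=0$: define $h$ on vertices using the bounded distance of $F,G$, uniform contractibility and bounded geometry, then propagate by filling the cycle $z_\sigma = c_F(\sigma)-c_G(\sigma)-h(\partial\sigma)$ at each higher dimension. Your verification that $z_\sigma$ is a cycle (using $\partial c_{F/G}=c_{F/G}\partial$, the inductive identity on faces, and $\partial^2=0$) and your point that uniform contractibility gives the filling inside a uniformly bounded ball while bounded geometry converts that into the $\ell$- and $\|\cdot\|_\infty$-bounds are exactly the missing details in the paper's two-line remark.
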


\noindent The homotopy $h$ is also defined by induction on $k$. For $k=0$ and $\sigma\in X_0$, $h(\sigma)$ is defined to be any $1$-chain in $Y$ verifying
$$\partial h(\sigma)=c_F(\sigma)-c_G(\sigma),\ \ell(h(\sigma))\lesssim |c_F(\sigma)-c_G(\sigma)|, \text{ and }\|h(\sigma)\|_\infty=1.$$

\begin{proof}[Proof of Theorem \ref{invariance}]
Let us prove (1). For $\omega\in \Lp^\phi(Y_k)$, define the pull-back 
$$F^*(\omega):X_k\to \R,\ F^*(\omega)(\sigma)=\omega\left(c_F(\sigma)\right),\ \sigma\in X_k.$$
We show that $F^*(\omega)$ belongs to $\Lp^\phi(X_k)$. The proof will also show the continuity of $F^*$. For $\alpha>0$, by convexity of $\phi$, we have
\begin{align*}
\sum_{\sigma\in X_k}\phi\left(\frac{F^*(\omega)(\sigma)}{\sf{N}_k\sf{L}_k\alpha}\right)&\leq \sum_{\sigma\in X_k}\sum_{\sigma'\in\supp\,c_F(\sigma)}\phi\left(\frac{\omega(\sigma')}{\alpha}\right).
\end{align*}
For fixed $\sigma'\in Y_k$, the uniform properness of $F$ implies that the set of $\sigma\in X_k$ for which $\sigma'\in \supp\ c_F(\sigma)$, is contained in a ball of $X$ of radius $r=\sf{D}_F(2\sf{L}_k+\sf{\Lambda}+\sf{C})$. Then, their number is bounded by $\sf{N}_X(r)$. This implies
$$\sum_{\sigma\in X_k}\sum_{\sigma'\in\supp\,c_F(\sigma)}\phi\left(\frac{\omega(\sigma')}{\alpha}\right)\leq \sf{N}_X(r)\sum_{\sigma'\in Y_k}\phi\left(\frac{\omega(\sigma')}{\alpha}\right).$$
As in (\ref{keyarg2}), this gives $\|F^*(\omega)\|_\phi\leq \sf{N}_k\sf{L}_k\|\omega\|_{\left(\sf{N}_X(r)\phi\right)}\lesssim\|\omega\|_\phi.$

\medskip
\noindent Denote by $\delta_k^X$ and $\delta_k^Y$ the coboundary operators of $X$ and $Y$ respectively. Then $\delta_k^X\circ F^*=F^*\circ \delta_k^Y$, since $c_F$ commutes with the boundary operator. Thus, $F^*$ induces a continuous linear map $F^*:\Lp^\phi H^k(Y)\to \Lp^\phi H^k(X)$. This also holds for the reduced cohomology by the continuity of $F^*$.

\medskip
\noindent Let us prove (2). Suppose that $\|F-G\|_\infty<\infty$. For $\omega\in \Lp^\phi(Y_{k+1})$, define the pull-back 
$$h^*(\omega):X_k\to \R,\ h^*(\omega)(\sigma)=\omega\left(h(\sigma)\right),\ \sigma\in X_k.$$
As in the proof of (1), by the convexity of $\phi$, we have
$$\sum_{\sigma\in X_k}\phi\left(\frac{h^*(\omega)(\sigma)}{\sf{N}'_k\sf{L}'_k\alpha}\right)\leq\sum_{\sigma\in X_k}\sum_{\sigma'\in\supp\,h(\sigma)}\phi\left(\frac{\omega(\sigma')}{\alpha}\right).$$
For fixed $\sigma'\in Y_{k+1}$, the set of $\sigma\in X_k$ for which $\sigma'\in \supp\ h(\sigma)$, is contained in a ball of radius $r'$ which depends only on $\sf{L}_k'$ and the geometric data of $X,Y,F$ and $G$. Therefore, as in (1), we obtain $\|h^*(\omega)\|_\phi\leq \sf{N}'_k\sf{L}'_k\|\omega\|_{(\sf{N}_X(r')\phi)}\lesssim\|\omega\|_{\phi}$. This shows that $h^*:\Lp^\phi(Y_{k+1})\to \Lp^\phi(X_k)$ is a bounded operator. By Lemma \ref{homotopy}, we have
$$h^*\circ \delta_k^Y+\delta_{k-1}^X\circ H^*=F^*-G^*,$$
which implies that $F^*$ and $G^*$ induce the same map in $\Lp^\phi$-cohomology.

\medskip
\noindent Finally, (3) results from (2) since $F^*\circ G^*$ and $G^*\circ F^*$ are the identity maps.
\end{proof}

\subsection{The degree one coarse Orlicz cohomology.}\label{coarsec}

Let $(X,d)$ be a complete proper metric space equipped with a Radon measure $\mu$. We say that $X$ has \emph{bounded geometry} if
\begin{equation}\label{bgeom}
0<v(r)=\inf_{x\in X}\left\{\mu\left(B(x,r)\right)\right\}\leq V(r)=\sup_{x\in X}\left\{\mu\left(B(x,r)\right)\right\}<\infty,\, \forall\, r>0.
\end{equation}
We say that $X$ has the \emph{midpoint property} if there exists a constant $\sf{c}_X\geq 0$ such that for all $x,y\in X$, there is some $z\in X$ with
$$\max\left\{d(z,x),d(z,y)\right\}\leq \frac{d(x,y)}{2}+\sf{c}_X.$$
This property is satisfied for instance, if $X$ is $(1,\sf{c})$-quasi-isometric to a geodesic metric space, with a mid-point constant equal to $3\sf{c}$. In the rest of this section we suppose that $X$ has bounded geometry and satisfies the mid-point property. This and the next section are inspired by \cite{PierrePansu89,VladimirShchur14} and we will closely follow the ideas there exposed.

\medskip
\noindent A \emph{kernel} in $X$ is a bounded function $\kappa:X\times X\to\R_+$ such that
\begin{enumerate}
\item for all $x\in X$, 
$$\int_X\kappa(x,y)d\mu(y)=1,\text{ and }$$
\item there are constants $\varepsilon>5\sf{c}_X$, $\delta>0$ and $R>0$ such that
$$\kappa(x,y)\geq\delta\text{ if }d(x,y)\leq\varepsilon\text{ and }\kappa(x,y)=0\text{ if }d(x,y)\geq R.$$
\end{enumerate}
Such a kernel always exists, consider for example
\begin{equation}\label{kernel}
\kappa(x,y)=\frac{1}{\mu\left(B(x,r)\right)}\I_{\{d(x,y)<r\}}\ (\text{with }r >5\sf{c}_X).
\end{equation}
The positivity radius of a kernel $\kappa$ is the best constant $\varepsilon_\kappa$ for which (2) is verified. The convolution of two kernels $\kappa_1$ and $\kappa_2$ is the kernel given by
$$\kappa_1*\kappa_2(x,y)=\int_X\kappa_1(x,z)\kappa_2(z,y)d\mu(z).$$
Notice that by the definition of $\sf{c}_X$, the positivity radius of self convolutions of a kernel $\kappa$ satisfies
$$\varepsilon_{\kappa^{*2}}\geq \frac{6}{5}\,\varepsilon_{\kappa},\text{ and in particular, } \varepsilon_{\kappa^{*2^m}}\to +\infty\text{ when }m\to+\infty.$$
This implies that given any two kernels $\kappa_1$ and $\kappa_2$ on $X$, there is some $m\in\N$ and some constant $\sf{C}$ (depending only on the kernels) such that 
\begin{equation}\label{dominated}
\kappa_1\leq \sf{C}\kappa_2^{*2^m}.
\end{equation}
Given a kernel $\kappa$, we denote by $\mu_\kappa$ the measure $\kappa\,d\mu\otimes d\mu$ on $X^2=X\times X$. The space of $(\phi,\kappa)$-integrable cocycles on $X$ is the closed subspace of $L^\phi\left(X^2,\mu_\kappa\right)$ given by the measurable functions $\omega:X\times X\to \R$ verifying $\omega(x,y)=\omega(x,z)+\omega(z,y)$ for almost every $x,y,z\in X$. We denote by $Z_{\phi,\kappa}(X)$ the Banach space of $(\phi,\kappa)$-integrable cocycles on $X$. We write $\|\cdot\|_{\phi,\kappa}$ for the corresponding Luxembourg norm of a cocycle.

\medskip
\noindent Given a measurable function $u:X\to \R$, we can define the cocycle $Du(x,y)=u(x)-u(y)$. We obtain in this way a bounded operator $D:L^\phi(X,\mu)\to Z_{\phi,\kappa}(X)$. We define the coarse $L^\phi_\kappa$-cohomology in degree one as the quotient $L^\phi_\kappa H^1(X):=Z_{\phi,\kappa}(X)/\rm{Im }(D)$.

\medskip
\noindent Observe that by (\ref{dominated}), different kernels define equivalent $\phi$-norms when restricted to cocycles. That is, the integrability condition is independent of the kernel. Notice also that a measurable function $u:X\to\R$ with $\|Du\|_{\phi,\kappa}<\infty$ is in $L^1_{\mathrm{loc}}(X)$.

\medskip
\noindent We need to define the convolution of functions and cocylces with respect to a kernel $\kappa$. Given a measurable function $u:X\to \R$, we define the convolution
\begin{equation}\label{convfunc}
u*\kappa(x)=\int_X u(z)\kappa(x,z)d\mu(z),
\end{equation}
and given a cocycle $\omega:X\times X\to \R$, we define the convolution
\begin{equation}\label{convcocycle}
\omega*\kappa(x,y)=\int_{X^2}\omega(z,z')\kappa(x,z)\kappa(y,z')d\mu(z)d\mu(z').
\end{equation}
Notice the following two important facts about convolutions. First, that it commutes with the coarse coboundary operator: $D(u*\kappa)=(Du)*\kappa$. And second, that
$$\omega*\kappa-\omega=Du,\text{ where }u(x)=\int_X\omega(x,z)\kappa(x,z)d\mu(z).$$
This implies that $\omega*\kappa$ is a cocycle, and by Jensen's inequality, that $\|u\|_\phi\leq \|\omega\|_{\phi,\kappa}$. This also shows that the convolution induces the identity map in cohomology.

\begin{proposition}\label{qiinvcoarse}
Let $X$ and $Y$ be two complete proper metric spaces equipped with Radon measures $\mu_X$ and $\mu_Y$ satisfying the conditions (\ref{bgeom}) and satisfying the midpoint property. Let $F:X\to Y$ be a $(\sf{\Lambda},\sf{c})$-quasi-isometry between them. Then there are kernels $\kappa$, $\kappa_Y$ in $Y$ and $\kappa_X$ in $X$ such that the map
$$F^*:\omega\mapsto \left(\omega*\kappa\right)\circ F$$
induces an isomorphism from $L^\phi_{\kappa_Y}H^1(Y)$ to $L^\phi_{\kappa_X}H^1(X)$.
\end{proposition}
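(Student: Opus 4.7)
The plan is to adapt the proof of quasi-isometry invariance of $L^p$-cohomology from \cite{PierrePansu89}, with Jensen's inequality replacing H\"older, since for a general Young function $\phi$ one cannot freely pull $\phi$ in and out of integrals (cf.\ (\ref{difficulty})). The statement already fixes the form of $F^*$: pick a kernel $\kappa$ on $Y$ (with a large positivity radius, to be chosen later) and set $F^*(\omega)(x_1,x_2)=(\omega\ast\kappa)(F(x_1),F(x_2))$. Because $\omega\ast\kappa$ is a cocycle on $Y$, composing with $F\times F$ yields a cocycle on $X$, so $F^*$ has the correct target.

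For the boundedness $F^*\colon Z_{\phi,\kappa_Y}(Y)\to Z_{\phi,\kappa_X}(X)$, I would expand the convolution, apply Jensen against the probability density $\kappa(F(x_1),\cdot)\kappa(F(x_2),\cdot)\,d\mu_Y\otimes d\mu_Y$, and then use Fubini to dominate the Orlicz integrand of $F^*(\omega)$ by that of $\omega$ weighted by the auxiliary density
$$\Psi(z_1,z_2)=\int_{X^2}\kappa_X(x_1,x_2)\,\kappa(F(x_1),z_1)\kappa(F(x_2),z_2)\,d\mu_X(x_1)d\mu_X(x_2).$$
Since $F$ is a quasi-isometric embedding, $F^{-1}(B(z,R))$ has bounded diameter, hence bounded $\mu_X$-measure by bounded geometry; combined with the bounded supports of $\kappa$ and $\kappa_X$, this makes $\Psi$ bounded and supported in a bounded neighbourhood of the diagonal of $Y^2$, hence dominated by some $\sf{C}\,\kappa_Y^{*2^m}$ via (\ref{dominated}). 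The norm-equivalence (\ref{equiv}) then closes the boundedness. For the descent to cohomology, if $\omega=Du$ with $u\in L^\phi(Y,\mu_Y)$, then convolution commutes with $D$, giving $F^*(Du)=D(u\ast\kappa\circ F)$, and the same Jensen+Fubini argument applied to the function $u\ast\kappa\circ F$ places it in $L^\phi(X,\mu_X)$.

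For the inverse, let $G\colon Y\to X$ be a quasi-inverse of $F$ and put $G^*(\eta)(y_1,y_2)=(\eta\ast\kappa_X)(G(y_1),G(y_2))$. Fubini yields
$$G^*F^*(\omega)(y_1,y_2)=\int_{Y^2}\omega(z_1,z_2)\,K(y_1,z_1)K(y_2,z_2)\,d\mu_Y(z_1)d\mu_Y(z_2),$$
with $K(y,z)=\int_X\kappa(F(x),z)\,\kappa_X(G(y),x)\,d\mu_X(x)$. A direct computation from the normalizations gives $\int_Y K(y,\cdot)\,d\mu_Y=1$; $K$ is clearly bounded and has bounded support; and the positivity condition for $K$ holds provided the positivity radius of $\kappa$ is chosen large compared to $\sup_Y d(F\circ G,\mathrm{id}_Y)$, the positivity radius of $\kappa_X$, and the distortion constants of $F$. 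Hence $K$ is itself a kernel on $Y$, and the displayed expression is precisely $\omega\ast K$ in the sense of (\ref{convcocycle}). The identity $\omega\ast K-\omega=Dv$ with $v\in L^\phi(Y,\mu_Y)$ recalled at the end of Section~\ref{coarsec} then gives $G^*F^*=\mathrm{id}$ in $L^\phi_{\kappa_Y}H^1(Y)$; symmetry yields $F^*G^*=\mathrm{id}$.

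The main obstacle, I expect, is the boundedness estimate. Jensen is much weaker than H\"older for the Luxembourg norm, so every step must be routed through a carefully chosen auxiliary kernel and controlled via bounded geometry; all the positivity radii, supports, and distortion constants have to be mutually compatible. Once this bookkeeping is in place, the composition step reduces to Fubini together with the convolution-cohomology identity already available in the excerpt.
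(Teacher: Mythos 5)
Your proposal is correct and shares the paper's basic architecture (same map $F^*$, Jensen plus Fubini to transfer the Orlicz integral), but it departs from the paper in two places worth flagging. In the boundedness step, the paper does not leave $\Psi$ as an auxiliary density and then appeal to (\ref{dominated}) and (\ref{equiv}): it \emph{defines} $\kappa_Y(z,z')=I(z)^{-1}\Psi(z,z')$ with $I(z)=\int_X\kappa(F(x),z)\,d\mu_X(x)$, checks using (\ref{bgeom}) that $I$ is pinched between two positive constants so that $\kappa_Y$ is an honest kernel, and then Jensen gives the clean inequality $\|(\omega*\kappa)\circ F\|_{\phi,\kappa_X}\leq\|\omega\|_{\phi,\kappa_Y}$ with constant one. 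Your version reaches the same conclusion up to a multiplicative constant; this is fine since different kernels give equivalent norms on cocycles, but the paper's normalization avoids invoking (\ref{dominated}) at that point. In the invertibility step the two arguments genuinely diverge: the paper just asserts that ``quasi-isometries at bounded distance induce the same maps in cohomology'' and concludes, whereas you explicitly compute $G^*F^*(\omega)=\omega*K$ with $K(y,z)=\int_X\kappa(F(x),z)\kappa_X(G(y),x)\,d\mu_X(x)$, verify $\int_Y K(y,\cdot)\,d\mu_Y=1$, boundedness, bounded support, and the positivity condition (for suitably large positivity radius of $\kappa$ relative to $\sup d(F\circ G,\mathrm{id})$ and the distortion constants), and then apply the ``convolution is the identity in cohomology'' observation together with $\|v\|_\phi\leq\|\omega\|_{\phi,K}\lesssim\|\omega\|_{\phi,\kappa_Y}$. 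That direct computation is more self-contained: the bounded-distance principle for coarse cohomology is not proved in the paper (only the simplicial analogue, Lemma \ref{homotopy}), so your argument actually fills a small gap. The only loose ends in your sketch are that you should record the positivity radius of $K$ must also exceed $5\mathsf{c}_Y$ for $K$ to qualify as a kernel, and that the symmetric identity $F^*G^*=\mathrm{id}$ needs the same calculation with the roles of $F,G$ and $X,Y$ swapped, which you indicate but do not carry out.
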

\begin{proof}
Choose $R>\max\{\sf{\Lambda}+2\sf{c},5\sf{c}_Y\}$, and consider the kernel $\kappa$ on $Y$ given by (\ref{kernel}) with $r=R$. Note that for any $z\in Y$, we have
\begin{equation}\label{est1}
\frac{v_X(1)}{V_Y(R)}\leq I(z)\leq \frac{V_X(\sf{\Lambda}(R+2\sf{c}))}{v_Y(R)},\text{ where }I(z)=\int_X\kappa(F(x),z)d\mu_X(x),
\end{equation}
and $v_X,\, V_X,\, v_Y$, and $V_Y$, denote the functions defined in (\ref{bgeom}) for $X$ and $Y$ respectively.

\medskip
\noindent We let now 
$$R_X=2\left(\frac{R-2\sf{c}}{\sf{\Lambda}}\right)+\sf{\Lambda}\left(5\sf{c}_Y+1+3\sf{c}\right),$$
and consider the kernel $\kappa_X$ on $X$ given by (\ref{kernel}) with $r=R_X$. Finally, define the kernel $\kappa_Y$ on $Y$ given by
$$\kappa_Y(z,z')=\frac{1}{I(z)}\int_{X^2}\kappa_X(x,y)\kappa(F(x),z)\kappa(F(y),z')d\mu_X(x)d\mu_X(y).$$
One checks that $\kappa_Y$ is bounded (by (\ref{est1})), that its positivity radius is at least $5\sf{c}_Y+1$, and that $\kappa_Y(z,z')=0$ if $d(z,z')>2R+\sf{\Lambda} R_X+\sf{c}$.

\medskip
\noindent By construction, and by Jensen's inequality, $\|(\omega*\kappa)\circ F\|_{\phi,\kappa_X}\leq \|\omega\|_{\phi,\kappa_Y}$ for any cocycle $\omega$ on $Y$. This shows that $F^*$ maps $Z_{\phi,\kappa_Y}(Y)$ on $Z_{\phi,\kappa_X}(X)$. Moreover, by (\ref{est1}) and Jensen's inequality again, one checks that $\|(u*\kappa)\circ F\|_\phi\lesssim \|u\|_{\phi}$ for any measurable function $u:Y\to \R$. Since $D\left((u*\kappa)\circ F\right)=\left((Du)*\kappa\right)\circ F$, we see that $F^*$ induces a continuous map from $L^\phi_{\kappa_Y}H^1(Y)$ to $L^\phi_{\kappa_X}H^1(X)$. Since quasi-isometries at bounded distance induce the same maps in cohomology, this shows that $F^*$ induces an isomorphism.
\end{proof}

\subsection{Comparision between the De Rham and the simplicial Orlicz cohomologies}\label{riemann}

In this section we identify the De Rahm cohomology with the coarse cohomology for a simply connected complete Riemannian manifold $M$ with Ricci curvature bounded from below and positive injectivity radius. We denote by $dx$ the volume element of $M$ and notice that $M$ satisfies conditions (\ref{bgeom}). Also, $M$ is quasi-isometric to a geometric simplicial graph $X_M$, see \cite{MasahikoKanai85}. 

\medskip
\noindent Let $Z_\phi^\infty(M)$ be the space of closed smooth differential $1$-forms in $M$. Equip it with the Luxembourg norm: for $\omega\in Z^\infty(M)$,
$$\|\omega\|_\phi=\inf\left\{\alpha>0:\int_M\phi\left(\frac{|\omega_x|}{\alpha}\right)dx\leq 1\right\}\in[0,+\infty].$$
Here, for $x\in M$, $|\omega_x|$ is the operator norm of the linear map $\omega_x:T_xM\to\R$. Denote $Z^\infty_\phi(M):=\{\omega\in Z^\infty(M):\|\omega\|_\phi<\infty\}$. Its completion $Z_\phi(M)$ is the space of measurable $\phi$-integrable $1$-forms on $M$ which have zero weak exterior derivative.

\medskip
\noindent The exterior derivative $d:C^\infty_\phi(M)\to Z^\infty_\phi(M)$ is a bounded operator when we consider the norm $\|u\|_\phi+\|du\|_\phi$ in the space of smooth functions. Let the operator $d$ to be defined in the completion of $C_\phi^\infty(M)$; that is, the space of $\phi$-integrable functions on $M$ which have $\phi$-integrable weak derivative. We define the Orlicz-De Rham first cohomology group of $M$ as the quotient normed space $L^\phi H^1(M):=Z_\phi(M)/\mathrm{Im}\,d$. By integration, we can show that 
$$L^\phi H^1(M)\simeq \left\{u\in L^1_{\mathrm{loc}}(M):\|du\|_\phi<\infty\right\}/\left\{u\in L^\phi(M):\|du\|_\phi<\infty\right\}\oplus \R.$$
The next lemma is a minor generalization of \cite[Lemma 1.\,12]{PierrePansu89}.
\begin{lemma}\label{Dvsd}
For any $0<r<\mathrm{inj}(M)$, there exist a kernel $\kappa_r$ on $M$ and a constant $\sf{C}_r$ such that for any $u\in L^1_{\mathrm{loc}}(M)$ with $\|du\|_\phi<\infty$, the following inequality holds
$$\|Du\|_{\phi,\kappa_r}\leq \sf{C}_r \|du\|_\phi.$$
The constant $\sf{C}_r$ depends only on $r$ and $\phi$.
\end{lemma}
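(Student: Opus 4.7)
The plan is to exploit the fundamental theorem of calculus along minimizing geodesics between nearby points, combined with Jensen's inequality, to reduce the bound on the cocycle $Du$ to an integral estimate for $|du|$ on $M$. Choose $r<\mathrm{inj}(M)$ and set
$$\kappa_r(x,y)=\frac{\I_{\{d(x,y)<r\}}}{\mathrm{vol}(B(x,r))}.$$
By the bounded geometry hypotheses, $\mathrm{vol}(B(x,r))$ is bounded above and below uniformly by positive constants depending only on $r$. Since $r$ is below the injectivity radius, each pair $(x,y)$ in the support of $\kappa_r$ is joined by a unique minimizing geodesic $\gamma=\gamma_{x,y}:[0,1]\to M$ of constant speed $d(x,y)<r$. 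A preliminary step is to reduce to smooth $u$ by mollification: Jensen applied in the convolution variable gives $\|du_\epsilon\|_\phi\leq\|du\|_\phi$, and Fatou's lemma transports the final estimate back to $u$ using a.e.\ convergence.

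For smooth $u$ the identity $u(x)-u(y)=-\int_0^1\langle\nabla u(\gamma(t)),\gamma'(t)\rangle\,dt$, together with $|\gamma'(t)|\equiv d(x,y)<r$ and Jensen's inequality applied to the even convex $\phi$ with $[0,1]$ as probability space, yields for every $\alpha>0$
$$\phi\!\left(\frac{u(x)-u(y)}{\alpha}\right)\leq\int_0^1\phi\!\left(\frac{r\,|\nabla u(\gamma(t))|}{\alpha}\right)dt.$$

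The key geometric step is then to show that for every $t\in[0,1]$ and every non-negative measurable $f$ on $M$,
$$\int_{M^2}f(\gamma_{x,y}(t))\,\kappa_r(x,y)\,dx\,dy\;\leq\;\sf{K}_r\int_M f(z)\,dz,$$
with $\sf{K}_r$ independent of $t$. This is obtained by the change of variables $(x,y)\leftrightarrow(z,v)$ defined by $z=\gamma_{x,y}(t)$, $v=\gamma_{x,y}'(t)\in T_zM$, so that $x=\exp_z(-tv)$ and $y=\exp_z((1-t)v)$. Since $r<\mathrm{inj}(M)$, this is a diffeomorphism of $\{(z,v)\in TM:|v|<r\}$ onto $\{(x,y):d(x,y)<r\}$; in Euclidean space the Jacobian equals $1$ identically (an easy block determinant computation), and in the Riemannian setting Rauch's comparison together with the curvature bound built into the bounded geometry hypothesis makes the Jacobian uniformly comparable to $1$ on the relevant compact set of parameters. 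Integrating $|v|<r$ out of $T_zM$ then provides the required $\sf{K}_r$.

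Multiplying the Jensen estimate by $\kappa_r(x,y)$, integrating over $M^2$, and applying the displayed inequality with $f(z)=\phi(r|\nabla u(z)|/\alpha)$ via Fubini yields
$$\int_{M^2}\phi\!\left(\frac{u(x)-u(y)}{\alpha}\right)\kappa_r(x,y)\,dx\,dy\;\leq\;\sf{K}_r\int_M\phi\!\left(\frac{r|\nabla u(z)|}{\alpha}\right)dz.$$
Choosing $\alpha=r\,\|du\|_{\sf{K}_r\phi}$ makes the right-hand side $\leq 1$ by definition of the Luxembourg norm, so $\|Du\|_{\phi,\kappa_r}\leq r\,\|du\|_{\sf{K}_r\phi}$; by the norm equivalence (\ref{equiv}) this is $\leq\sf{C}_r\|du\|_\phi$ for a constant depending only on $r$ and $\phi$. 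The only substantive obstacle is the uniform Jacobian estimate for the map $\Phi_t(z,v)=(\exp_z(-tv),\exp_z((1-t)v))$ across all $t\in[0,1]$: at $t=0$ or $t=1$ the map may look degenerate because one endpoint coincides with $z$, but the free variable $v\in T_zM$ supplies the missing dimensions, and the estimate is uniform in $t$ thanks to $r<\mathrm{inj}(M)$ and the comparison theorem. Everything else is bookkeeping with the Luxembourg norm.
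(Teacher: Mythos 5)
Your proof is essentially correct, but it takes a genuinely different route from the paper's, and there is a small but real slip in the geometric ingredient you invoke.

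The paper sidesteps the Jacobian estimate that you need. Pansu's trick, which the paper reproduces, is to build the Jacobian of the exponential map directly into the kernel: one sets $\kappa_r(x,y)=\min\{1,\varrho(x,y)^{-1}\}\I_{\{d(x,y)\leq r\}}$, where $\varrho(x,y)^{-1}\,dy=dt\,dv$ is precisely the polar-coordinate volume element at $x$. With this weight, the integral $\int_{B(x,r)}\phi((u(y)-u(x))/\alpha)\,\varrho(x,y)^{-1}\,dy$ becomes the exact polar-coordinate integral over $T^1_xM\times(0,r]$; one then applies the fundamental theorem of calculus and Jensen along the radius, exactly as you do, and finally integrates over $x\in M$ using Liouville's theorem on $T^1M$ (invariance of $dx\,dv$ under the geodesic flow) to collapse the triple integral to $\mathrm{Vol}(S^{n-1})\,r\int_M\phi(r|d_xu|/\alpha)\,dx$. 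No change of variables for pairs $(x,y)$ and no comparison-theorem Jacobian estimate is ever needed. Your version, by contrast, uses the most natural normalized ball kernel $\I_{\{d<r\}}/\mathrm{vol}(B(x,r))$ and compensates for the unweighted measure $dx\,dy$ with the change of variables $(x,y)\leftrightarrow(z,v)$ along minimizing geodesics. That works, and it has the small aesthetic advantage that the resulting kernel already integrates to $1$, but it makes the Jacobian step load-bearing.

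On that Jacobian step: the claim that the Jacobian of $\Phi_t(z,v)=(\exp_z(-tv),\exp_z((1-t)v))$ is ``uniformly comparable to $1$'' via Rauch's comparison is both stronger than what you need and stronger than what the hypotheses give. The manifold $M$ in this section is only assumed to be simply connected, complete, with Ricci curvature bounded below and positive injectivity radius. Rauch's comparison requires sectional curvature bounds and would at best give a one-sided estimate; a two-sided ``comparable to $1$'' control is not available under Ricci alone. Fortunately you only need an \emph{upper} bound. Factor $\Phi_t$ through $(x,y)\mapsto(x,w)=\exp_x^{-1}y$ followed by the time-$t$ geodesic flow on $TM$, which preserves Liouville measure; this shows $dx\,dy=J_{\exp_x}(w)\,dz\,dv$, where $J_{\exp_x}$ is the Jacobian of the exponential map. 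For $|w|<r$, Bishop's volume comparison (Ricci bounded below) bounds $J_{\exp_x}(w)$ from above by the constant-curvature model value, and the bounded-geometry condition (\ref{bgeom}) bounds $\mathrm{vol}(B(x,r))$ from below. That is exactly what is needed to conclude $\int_{|v|<r}\kappa_r(x,y)\,J_{\exp_x}(w)\,dv\leq\sf{K}_r$, which gives your displayed inequality. So: replace ``Rauch'' by ``Bishop'', and replace ``comparable to $1$'' by ``bounded above'', and the argument is sound. Everything else — the FTC identity along the unique minimizing geodesic, the Jensen step on $[0,1]$, the Fubini step, the Luxembourg-norm bookkeeping, and the mollification-plus-Fatou reduction to smooth $u$ — is correct and parallels the paper's computation in spirit.
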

\begin{proof}
For a point $x\in M$, let $(v,t)\in T_x^1M\times (0,r]$, $y=\exp(tv)$, be the polar coordinates with origin $x$. Let $\pi:T^1M\to M$ be the canonical projection. We denote by $\varrho(x,y)$ the volume element in that coordinates, so $\varrho(x,y)^{-1}dy=dtdv$. Notice that $\varrho(x,y)=\varrho(y,x)$ a.e., see \cite[Lemma 1.\,12]{PierrePansu89}. Let also $\varphi_s$ be the geodesic flow.

\medskip
\noindent By the assumptions on $u$, there exists a sequence of smooth functions $u_n$ on $M$ which converges to $u$ in $L^\phi_{\mathrm{loc}}(M)$ and a.e., and such that $du_n$ converges to $du$ in $L^\phi(M)$. For each $n\in\N$, $x\in M$ and $v\in T^1M$, we have
\begin{equation}\label{smooth}
\left|u_n\left(\pi\left(\varphi_t(v)\right)\right)-u_n(x)\right|\leq \int_0^t\left|du_n\left(\varphi_s(v)\right)\right|ds.
\end{equation}
Taking limit as $n\to \infty$, $u$ satisfies (\ref{smooth}) a.e.. Note that by convexity, the function $\phi(t)/t$ is increasing. For $\alpha>0$, Jensen's inequality implies
$$\phi\left(\frac{u\left(\pi\left(\varphi_t(v)\right)\right)-u(x)}{\alpha}\right)\leq \frac{1}{r}\int_0^t\phi\left(\frac{r\left|du\left(\varphi_s(v)\right)\right|}{\alpha}\right)ds,$$
Integrating this inequality over $T_x^1M\times (0,r]$, we get
$$\int_{B(x,r)}\phi\left(\frac{u(y)-u(x)}{\alpha}\right)\varrho(x,y)^{-1}dy\leq \int_{T_x^1M}\int_0^r\phi\left(\frac{r|d_{\pi\varphi_t(v)}u|}{\alpha}\right)dtdv,\text{ for a.e. }x\in M.$$
Integrating now over $x\in M$, we obtain
$$\int_{d(x,y)\leq r}\phi\left(\frac{u(y)-u(x)}{\alpha}\right)\varrho(x,y)^{-1}dxdy\leq \mathrm{Vol}(S^{n-1})r\int_M\phi\left(\frac{r|d_xu|}{\alpha}\right)dx.$$
Letting $\kappa_r(x,y)=\min\left\{1,\varrho(x,y)^{-1}\right\}\I_{\{d(x,y)\leq r\}}$, we obtain $\|Du\|_{\phi,\kappa_r}\lesssim\|du\|_\phi$, with a multiplicative constant depending only on $r$ and $\phi$.
\end{proof}

\medskip
\noindent In the statement of the next proposition, we denote $\kappa_y:M\to\R$, with $y\in M$, the function $\kappa_y(x)=\kappa(x,y)$, where $\kappa$ is a kernel on $M$. The next proposition is a generalization to the general Orlicz case of \cite[Prop.\ 1.\,13]{PierrePansu89}.

\begin{proposition}\label{derham}
Let $M$ be a simply connected complete Riemannian manifold with Ricci curvature bounded from below and positive injectivity radius. Let $\kappa$ be a smooth kernel on $M$ satisfying in addition that
$$\sf{C}=\|d\kappa\|_\infty<\infty\text{ and } |d_x\kappa_z|\geq \delta>0\text{ if }d(x,z)\leq \varepsilon_\kappa.$$
Then there exists a kernel $\tilde{\kappa}$ on $M$, depending only on $\kappa$, such that the map $u\mapsto u*\kappa$ induces an isomorphism between $L^\phi_{\tilde{\kappa}} H^1(M)$ and $L^\phi H^1(M)$. In particular, the Orlicz-De Rham cohomology of $M$ is isomorphic to the simplicial cohomology $\ell^\phi H^1(X_M)$.
\end{proposition}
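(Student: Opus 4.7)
The plan is to show that both cohomologies are canonically isomorphic to the same quotient space, using the smoothing operator $u\mapsto u*\kappa$ as the bridge; the simply-connectedness of $M$ enters when we need a distributional constant to be a genuine constant. First, I choose $\tilde{\kappa}(x,y)=\mu(B(x,R))^{-1}\I_{\{d(x,y)\leq R\}}$ with $R$ larger than the support radius of $\kappa$ and than $5\sf{c}_M$; by the discussion around (\ref{dominated}) the Luxembourg norms on cocycles defined by $\tilde{\kappa}$ and by $\kappa$ are equivalent, so one can move freely between them.

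The main computational input is the identity
$$d(u*\kappa)(x)=-\int_M Du(x,y)\,d_x\kappa(x,y)\,d\mu(y),$$
obtained by differentiating under the integral and using $\int\kappa(x,y)d\mu(y)=1$. Since $|d_x\kappa|\leq\sf{C}$ and vanishes outside $\{d(x,y)\leq R\}$, Jensen's inequality applied with respect to the probability measure $\tilde{\kappa}(x,y)d\mu(y)$ yields $\|d(u*\kappa)\|_\phi\leq \sf{C}_1\|Du\|_{\phi,\tilde{\kappa}}$ for a constant $\sf{C}_1$ depending only on $\kappa$ and the geometric data of $M$. This shows that $T\colon [Du]\mapsto [d(u*\kappa)]$ is a well-defined continuous linear map from $L^\phi_{\tilde{\kappa}}H^1(M)$ to $L^\phi H^1(M)$; well-definedness at the level of classes requires checking that $u\in L^\phi$ implies $u*\kappa\in L^\phi$, which follows from Jensen and the bounded geometry of $M$.

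For surjectivity, given $u\in L^1_{\mathrm{loc}}(M)$ with $\|du\|_\phi<\infty$, Lemma \ref{Dvsd} together with the equivalence of kernels gives $\|Du\|_{\phi,\tilde{\kappa}}<\infty$, so $[Du]$ is a well-defined class. From the identity $u*\kappa(x)-u(x)=-\int Du(x,y)\kappa(x,y)d\mu(y)$ and Jensen, one deduces $\|u*\kappa-u\|_\phi\leq\|Du\|_{\phi,\kappa}<\infty$, so $d(u*\kappa)-du\in d(L^\phi)$ and $T([Du])=[du]$. For injectivity, if $d(u*\kappa)=dv$ with $v\in L^\phi$, then $u*\kappa-v$ has vanishing distributional differential on the connected manifold $M$ and so equals a constant $c$; combined with $u-u*\kappa\in L^\phi$, this gives $u-c\in L^\phi$ and hence $[Du]=0$ in $L^\phi_{\tilde{\kappa}}H^1(M)$.

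The final assertion about simplicial cohomology follows by composing the De Rham--coarse isomorphism just established with the quasi-isometry invariance of the coarse cohomology (Proposition \ref{qiinvcoarse}) applied to a quasi-isometry $M\to X_M$, and with the straightforward identification of the degree one coarse cohomology on the uniformly discrete simplicial complex $X_M$ with its simplicial cohomology (a cocycle restricted to adjacent vertex pairs yields a $1$-cochain, and any $1$-cochain extends to a cocycle by summation along edge paths). The principal technical obstacle is the careful calibration of the various radii entering in the definition of $\tilde{\kappa}$: it must be simultaneously a valid kernel (bounded, normalized, with positive lower bound on a ball of radius exceeding $5\sf{c}_M$) and large enough to control the smoothing estimate for $d\kappa$, which involves balancing $\|d\kappa\|_\infty$, $\delta$, $\varepsilon_\kappa$, and the bounded-geometry constants of $M$.
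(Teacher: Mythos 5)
Your argument is correct and, at the level of strategy, matches the paper: bound $d(u*\kappa)$ by a Jensen inequality against a weight dominating $|d_x\kappa|$, use Lemma~\ref{Dvsd} plus the identity $u*\kappa-u=-\int Du(x,y)\kappa(x,y)\,d\mu(y)$ for surjectivity, and use the same identity for injectivity. What you do differently is the choice of $\tilde\kappa$ and the accompanying integral representation. The paper constructs the rather specific kernel $\tilde\kappa(z,z')=I_z^{-1}\int_M |d_x\kappa_z|\kappa(x,z')\,dx$, uses the two-variable identity $d_x(u*\kappa)=\int_{M^2}(u(z)-u(z'))\,d_x\kappa_z\,\kappa(x,z')\,dz\,dz'$, and applies Jensen against the probability measure $I(x)^{-1}|d_x\kappa_z|\kappa(x,z')\,dz\,dz'$; this is where the hypotheses $\|d\kappa\|_\infty<\infty$ and $|d_x\kappa_z|\geq\delta$ enter (they furnish the two-sided bound $\sf{a}\leq I_z,I(x)\leq\sf{b}$ needed to define $\tilde\kappa$ and run Jensen). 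You instead take for $\tilde\kappa$ the standard ball-average kernel of radius at least $\max(R,5\sf{c}_M)$, use the one-variable identity $d_x(u*\kappa)=-\int_M Du(x,y)\,d_x\kappa(x,y)\,d\mu(y)$, and observe that $|d_x\kappa(x,y)|\leq\sf{C}V(R')\,\tilde\kappa(x,y)$ pointwise, after which Jensen against the probability measure $\tilde\kappa(x,\cdot)\,d\mu$ gives the estimate in one step. This is a genuine simplification: the Jensen step is lighter, and your argument never actually needs the lower bound $|d_x\kappa_z|\geq\delta$ (it is used in the paper only to make $I_z$ bounded away from zero so that $\tilde\kappa$ is well-defined and bounded); it relies instead on the equivalence (\ref{dominated}) of kernel norms on cocycles, which the paper already establishes. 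Both kernels give the same coarse cohomology, so the isomorphism statement is unchanged. Your last paragraph on passing to $\ell^\phi H^1(X_M)$ is only a sketch (in particular the identification of coarse and simplicial degree-one cohomology for $X_M$ is asserted rather than proved, and one should note that $X_M$ must be taken to be a uniformly contractible Rips-type complex, not just a Kanai graph, for the simplicial machinery of Section~\ref{simplicial} to apply); but the paper itself leaves this step implicit, so this is consistent with the paper's level of detail.
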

\begin{proof}
Consider the kernel $\tilde{\kappa}$ on $M$ given by
$$\tilde{\kappa}(z,z'):=\frac{1}{I_z}\int_M|d_x\kappa_z|\kappa(x,z')dx,\text{ where } I_z=\int_M|d_x\kappa_z|dx.$$
Let $\sf{a}=\delta v(\varepsilon_\kappa)$, and $\sf{b}=\sf{C} V(R)$. Notice that $\sf{a}\leq I_z\leq \sf{b}$, and that
$$\int_{M^2}|d_x\kappa_z|\kappa(x,z')dzdz'=\int_M|d_x\kappa_z|dz\in\left[\sf{a},\sf{b}\right].$$
Let $u\in L^1_{\mathrm{loc}}(M)$ with $\|Du\|_{\phi,\tilde{\kappa}}<\infty$. Consider the cocycle $\omega=(Du)*\kappa=D(u*\kappa)$. For any $y\in M$, the smooth function $u*\kappa$ satisfies $d(u*\kappa)=d\omega_y$, where $\omega_y(x)=\omega(x,y)$. Then
$$|d_x(u*\kappa)|\leq \int_{M^2}\left|u(z)-u(z')\right||d_x\kappa_z|\kappa(x,z')dzdz', \text{ for all } x\in M.$$
By Jensen's inequality, for any $\alpha>0$, we get
$$\int_M\phi\left(\frac{|d_x(u*\kappa)|}{\alpha}\right)dx\leq \frac{\sf{b}}{\sf{a}}\int_{M^2}\phi\left(\frac{\sf{b}|u(z)-u(z')|}{\alpha}\right)\tilde{\kappa}(z,z')dzdz'.$$
That is, $\|d(u*\kappa)\|_\phi\lesssim \|Du\|_{\phi,\tilde{\kappa}}$, where the comparison constant depends only on $\kappa$, $\phi$, and the geometric data of $M$.

\medskip
\noindent Consider now the bounded operator
\begin{align*}
\left\{u\in L^1_{\mathrm{loc}}(M):\|Du\|_{\phi,\tilde{\kappa}}<\infty\right\}\to \left\{u\in L^1_{\mathrm{loc}}(M):\|du\|_{\phi}<\infty\right\},\ u\mapsto u*\kappa
\end{align*}
If $u\in L^\phi(M)$, then $\|Du\|_{\phi,\tilde{\kappa}}<\infty$, and therefore $\|d(u*\kappa)\|_\phi\lesssim \|Du\|_{\phi,\tilde{\kappa}}<\infty$. This implies that $T:L^\phi_{\tilde{\kappa}}H^1(M)\to L^\phi H^1(M)$, $[u]\mapsto [u*\kappa]$ is a well defined bounded operator.

\medskip
\noindent We show now that this map is an isomorphism. Let $u\in L^1_{\mathrm{loc}}(M)$ be such that $\|du\|_\phi<\infty$. By Lemma \ref{Dvsd}, we have $\|Du\|_{\phi,\tilde{\kappa}}<\infty$. Moreover, since 
\begin{equation}\label{reg}
u*\kappa(x)-u(x)=\int_M(u(y)-u(x))\kappa(x,y)dy,
\end{equation}
by Jensen's inequality we get $\|u*\kappa-u\|_\phi\lesssim \|Du\|_{\phi,\tilde{\kappa}}$. Then, 
$$\|u*\kappa-u\|_\phi+\|d(u*\kappa-u)\|_\phi\lesssim \|Du\|_{\phi,\tilde{\kappa}}+\|du\|_\phi<\infty.$$
That is, $[u*\kappa]=[u],$ and in particular, $T$ is surjective.

\medskip
\noindent Suppose now that $u\in L^1_{\mathrm{loc}}(M)$, $\|Du\|_{\phi,\tilde{\kappa}}<\infty$, and $[u*\kappa]=0$ in $L^\phi H^1(M)$. Since we also have $\|Du\|_{\phi,\kappa}<\infty$, we can apply (\ref{reg}) and conclude that $u*\kappa-u\in L^\phi(M)$. Then, $u\in L^\phi(M)$ and $[u]=0$ in $L^\phi_{\tilde{\kappa}}(M)$. This shows that $T$ is an isomorphism.
\end{proof}

\begin{remark} Note that the proof of the previous proposition also shows that in each cohomology class there is a smooth function: in fact, since $\kappa$ is a smooth kernel the convolution $u*\kappa$ is a smooth function and $[u*\kappa]=[u]$ in $L^\phi H^1(M)$.
\end{remark}

\section{Radial limits and Orlicz-Besov spaces}\label{boundary}

\noindent In this section we prove Theorem \ref{thetracemap}. In the sequel, we assume that $X$ is a uniformly contractible, Gromov hyperbolic, quasi-starlike simplicial complex with bounded geometry. We assume also that $\partial X$ admits a visual metric $\varrho_w$ of visual parameter $e$, where $w\in X$ is a base point, which is Ahlfors regular of dimension $Q>0$. Recall that $H$ denotes the $Q$-dimensional Hausdorff measure of $\varrho_w$.

\begin{remark}
The contractibility assumption is not a restriction in the hyperbolic case. Indeed, a well known theorem of E. Rips \cite[Section 3.23]{Bridson-Haefliger99}, assures that there is always a uniformly contractible Rips complex $\tilde{X}$ of $X$. The inclusion map of $X$ into $\tilde{X}$ is a quasi-isometry, and $\tilde{X}_0=X_0$. In particular, by Theorem \ref{invariance}, the space $\Lp^\phi H^1(\tilde{X})$ does not depend on the particular choice of $\tilde{X}$ up to isomorphism of topological vector spaces.
\end{remark}

\noindent Since $X$ is contractible, the differential operator $\delta_0$ induces an isomorphism
\begin{align*}\label{isom}\Lp^\phi H^1(X)\simeq\left\{f:X_0\to\R:\delta_0f\in\Lp^\phi(X_1)\right\}/(\Lp^\phi(X_0)+\R),\end{align*}
where $\R$ denotes the constant functions on $X_0$. Here, the space on the right is equipped with the topology induced by $\|\delta_0f\|_\phi$. In this section we will write $d$ instead of $\delta_0$.

\subsection{Spherical coordinates and radial shift}

Let $\mathcal{R}_w$ be the compact space of geodesic rays starting at $w$, and let $\pi_w:\mathcal{R}_w\to \partial X$ be the projection $\pi_w(\theta)=\lim_{r\to+\infty}\theta(r)$. Consider $\theta:\partial X\to \mathcal{R}_w$, $\xi\mapsto \theta_\xi$, a measurable section of $\pi_w$. For $r\geq 1$, $\theta_\xi[r]$ denotes the edge of $\theta_\xi$ at distance $r-1$ from $w$.

\medskip
\noindent We use $|x|$ for the distance $|x-w|$. For $r\in \N$, let $\Sigma_r=\{|x|=r\}$, and $A_r$ be the set of edges of $X$ with one extremity in the sphere $\Sigma_{r-1}$ and the other one in $\Sigma_r$. If $\sigma\in A_r$, we denote by $\sigma_+$ (resp. $\sigma_-$) the extremity of $\sigma$ in $\Sigma_r$ (resp. $\Sigma_{r-1}$).

\medskip
\noindent Define the ``volume element'' by $v:=e^Q$, and let $\mu_v$ be the discrete measure on $\N$ assigning the mass $v^r$ at $r\in\N$. In the product space $\partial X\times \N$, we consider the product measure $H\otimes \mu_v$. When a $\phi$-norm is computed in this product space, it will be understood that it is with respect to this measure. By the spherical coordinates on $X$, we mean the map $\partial X\times \N\to X_1$ given by $(\xi,r)\mapsto \theta_\xi[r]$.

\medskip
\noindent Consider the radial shift map $S:\partial X\times\N\to\partial X\times\N$ given by $S(\xi,r)=(\xi,r+1)$. If $G:\partial X\times\N\to\R$ is any function, we write $S^*G$ for the function $G\circ S$.

\begin{lemma}\label{contraction}
Let $\phi$ be a doubling Young function with doubling constant $\sf{K}$. The radial shift defines a bounded operator $S^*:L^\phi(\partial X\times\N)\to L^\phi(\partial X\times\N)$ with $\|S^*\|\leq v^{-1/\sf{K}}$.
\end{lemma}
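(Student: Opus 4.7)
The strategy is to unpack the Luxembourg norm, exploit that the radial shift $S$ behaves as a translation on the geometric measure $\mu_v$, and then use the reverse-doubling pointwise estimate of Lemma \ref{lowerdoubling} to convert the resulting factor $v^{-1}$ on the modular into the factor $v^{-1/\sf{K}}$ on the norm.

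First, invoking Lemma \ref{equivalentphi}, I replace $\phi$ by an equivalent (for small $t$) Young function satisfying the doubling condition with $t_0=+\infty$; this does not change the Orlicz space nor the operator $S^*$ up to equivalence of norms, but it makes the pointwise inequality of Lemma \ref{lowerdoubling} available on all of $\R_+$. Fix $G\in L^\phi(\partial X\times\N)$ and any $\alpha>\|G\|_\phi$, so that $\int\phi(G/\alpha)\,d(H\otimes\mu_v)\leq 1$. By Fubini and a reindexing $r\mapsto r+1$,
$$\int_{\partial X\times\N}\phi\!\left(\frac{S^*G(\xi,r)}{\alpha}\right) d(H\otimes\mu_v)=\sum_{r\in\N}v^r\int_{\partial X}\phi\!\left(\frac{G(\xi,r+1)}{\alpha}\right) dH=v^{-1}\sum_{r\geq 1}v^r\int_{\partial X}\phi\!\left(\frac{G(\xi,r)}{\alpha}\right) dH\leq v^{-1}.$$

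Since $v=e^Q>1$, the number $x:=v^{-1/\sf{K}}$ lies in $(0,1]$. Applying Lemma \ref{lowerdoubling} pointwise to $y:=v^{1/\sf{K}} S^*G(\xi,r)/\alpha$ gives
$$v^{-1}\phi\!\left(v^{1/\sf{K}}\,\frac{S^*G(\xi,r)}{\alpha}\right)=x^{\sf{K}}\phi(y)\leq \phi(xy)=\phi\!\left(\frac{S^*G(\xi,r)}{\alpha}\right).$$
Integrating against $H\otimes\mu_v$ and combining with the previous display yields
$$\int_{\partial X\times\N}\phi\!\left(\frac{S^*G(\xi,r)}{v^{-1/\sf{K}}\alpha}\right) d(H\otimes\mu_v)\leq v\int \phi\!\left(\frac{S^*G}{\alpha}\right) d(H\otimes\mu_v)\leq v\cdot v^{-1}=1.$$
By the definition of the Luxembourg norm this means $\|S^*G\|_\phi\leq v^{-1/\sf{K}}\alpha$, and letting $\alpha\downarrow \|G\|_\phi$ gives the claimed estimate.

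The only delicate point is the validity of Lemma \ref{lowerdoubling} for arbitrary $y\in\R_+$; this is precisely what the preliminary reduction via Lemma \ref{equivalentphi} secures. Everything else reduces to the invariance $S_*\mu_v=v^{-1}\mu_v$ on $\N$ and a rescaling of the modular.
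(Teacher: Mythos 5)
Your proof is correct and follows essentially the same route as the paper: both reduce to the reindexing identity for the shift under $H\otimes\mu_v$ (which produces the factor $v^{-1}$ on the modular) and both convert that factor into a factor $v^{-1/\sf{K}}$ on the norm by invoking the reverse-doubling pointwise bound of Lemma \ref{lowerdoubling}. The only cosmetic difference is that you apply Lemma \ref{lowerdoubling} to $S^*G$ after computing the modular of $S^*G/\alpha$, while the paper applies it termwise to $G$ inside the reindexed sum; these are the same estimate arranged in a slightly different order.
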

\begin{proof}
By Lemma \ref{lowerdoubling}, we have $x^\sf{K}\phi(y)\leq \phi(xy)$ for any $x\in [0,1]$. Let $G\in L^\phi(\partial X\times\N)$ and take $\alpha>0$. Then
\begin{align*}
\int_{\partial X}\sum_{r=1}^\infty\phi\left(\frac{S^*G(\xi,r)}{\alpha}\right)v^rdH(\xi)&
=\int_{\partial X}\sum_{r=2}^\infty \frac{1}{v}\phi\left(\frac{G(\xi,r)}{\alpha}\right)v^rdH(\xi)\\
&\leq \int_{\partial X}\sum_{r=1}^\infty\phi\left(\frac{G(\xi,r)}{v^{1/\sf{K}}\alpha}\right)v^rdH(\xi).
\end{align*}
Then, for $\alpha=v^{-1/\sf{K}}\|G\|_\phi$, the last integral is equal to one. This proves the lemma.
\end{proof}

\noindent An immediate consequence of Lemma \ref{contraction} is that the operator 
\begin{equation}\label{T}
T:=\sum_{k=0}^\infty (S^*)^k
\end{equation}
is bounded. Its norm is bounded by a uniform constant depending only on $\sf{K}$ and $v$.

\subsection{The radial limit} Let $f:X_0\to \R$ be such that $df\in\Lp^\phi(X_1)$. For $\xi\in \partial X$, we set
\begin{align}\label{limit}
f_\infty(\xi)=\lim_{r\to \infty}f(\theta_\xi(r)),
\end{align}
if the limit exists. To make the notation simpler, we write $\Delta_{\xi,r}=df(\theta_\xi[r])$, $r\in\N$. Define $F:\partial X\times\N\to \R$ and $DF:\partial X\times\N\to\R$ to be the functions given by
$$F(\xi,r)=\sum_{k\geq r}\Delta_{\xi,k}\text{ and } DF(\xi,r)=\Delta_{\xi,r}.$$
Our goal is to show that $F$ is well defined. We first need some notation.

\medskip
\noindent For $x\in X$ and $R>0$, define the shadow 
$$\mho_w(x,R):=\left\{\xi\in\partial X: \exists\,\theta\in\mathcal{R}_w,\text{ asymptotic to }\xi,\text{ with }\theta\cap B_X(x,R)\neq\emptyset\right\}.$$
For $R$ big enough, there is a constant $\sf{C}_R\geq 1$, such that for any $x\in X$, there exists $\xi\in\partial X$ with
$$B\left(\xi,\sf{C}_R^{-1}e^{-|x|}\right)\subset \mho_w(x,R)\subset B\left(\xi,\sf{C}_Re^{-|x|}\right).$$
See for example \cite{Coornaert93}. We fix such a big enough $R$ and we write $\mho(x)$ instead of $\mho_w(x,R)$. Notice that $H\left(\mho(x)\right)\asymp v^{-|x|}$.

\begin{lemma}\label{existence3}
Let $f:X_0\to\R$ be any function. Then, $\|DF\|_\phi\lesssim \|df\|_\phi$. In particular, if $df\in \Lp^\phi(X_1)$, then $DF\in L^\phi(\partial X\times\N)$.
\end{lemma}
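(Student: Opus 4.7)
The plan is to unfold the Luxembourg $\phi$-norm of $DF$ by Fubini, stratify the boundary according to the value of $\theta_\xi[r]$, use the shadow estimate together with Ahlfors regularity to trade the Hausdorff mass of a radial fibre for a factor $v^{-r}$, and then recognize what remains as the $\phi$-energy of $df$ on $X_1$. I would conclude by converting the resulting integral inequality into a Luxembourg-norm inequality via the equivalence (\ref{equiv}).

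Concretely, for any $\alpha>0$ one writes
\begin{equation*}
\int_{\partial X}\sum_{r=1}^{\infty}\phi\!\left(\frac{DF(\xi,r)}{\alpha}\right)v^{r}\,dH(\xi)
=\sum_{r=1}^{\infty}v^{r}\sum_{\sigma\in A_r}\phi\!\left(\frac{df(\sigma)}{\alpha}\right) H(E_{r,\sigma}),
\end{equation*}
where $E_{r,\sigma}:=\{\xi\in\partial X:\theta_\xi[r]=\sigma\}$. Here I use that for a geodesic ray starting at $w$ its $r$-th edge belongs to $A_r$, so only edges in $A_r$ contribute.

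The crucial estimate is $H(E_{r,\sigma})\lesssim v^{-r}$, uniformly in $\sigma$ and $r$. If $\xi\in E_{r,\sigma}$, the ray $\theta_\xi$ passes through $\sigma_+\in\Sigma_r$, so $\xi$ belongs to the shadow $\mho(\sigma_+)$. By the shadow estimate recalled just before the lemma and Ahlfors regularity of $H$, one has $H(\mho(\sigma_+))\lesssim e^{-Qr}=v^{-r}$, with a constant depending only on $R$, $Q$ and the Ahlfors data. Plugging this in yields
\begin{equation*}
\int_{\partial X}\sum_{r=1}^{\infty}\phi\!\left(\frac{DF(\xi,r)}{\alpha}\right)v^{r}\,dH(\xi)\;\leq\;\sf{C}\sum_{\sigma\in X_1}\phi\!\left(\frac{df(\sigma)}{\alpha}\right),
\end{equation*}
since different edges contribute to disjoint strata and every $\sigma\in A_r$ appears at most once.

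Finally, choosing $\alpha=\|df\|_{\sf{C}\phi}$ makes the right-hand side $\leq 1$, so $\|DF\|_{\phi}\leq \|df\|_{\sf{C}\phi}$; the equivalence (\ref{equiv}) then gives $\|DF\|_{\phi}\lesssim \|df\|_\phi$ with a constant depending only on $\phi$, $\sf{C}$ and the geometric data. There is no real obstacle here: the only non-trivial ingredient is the shadow bound, which is a standard consequence of Ahlfors regularity; convexity of $\phi$ is not even needed, as the argument is a direct comparison of integrals before passing to norms.
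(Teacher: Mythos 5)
Your proof is correct and follows the same route as the paper: stratify $\partial X\times\N$ by the fibres $E_{r,\sigma}=\{\xi:\theta_\xi[r]=\sigma\}$, bound $H(E_{r,\sigma})\lesssim v^{-r}$ via the shadow estimate together with Ahlfors regularity, and convert the resulting integral inequality into a Luxembourg-norm bound through the equivalence (\ref{equiv}). Your observation that convexity of $\phi$ plays no role in this particular computation is also accurate.
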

\begin{proof}
We first prove that there exists a constant $\sf{C}\geq 1$, such that for any function $f:X_0\to \R$ and any $\alpha>0$, we have
\begin{equation}\label{existence2}
\int_{\partial X}\sum_{r=1}^\infty\phi\left(\frac{\Delta_{\xi,r}}{\alpha}\right)v^{r}dH(\xi)\leq \sf{C}\sum_{\sigma\in X_1}\phi\left(\frac{df(\sigma)}{\alpha}\right).
\end{equation}
For a fixed edge $\sigma\in A_r$, the set of points $\xi\in \partial X$ such that $\theta_\xi[r]=\sigma$, is contained in the shadow $\mho(\sigma_+)$. Then, there is a uniform constant $\sf{C}$ such that $H\left(\xi:\theta_\xi[r]=\sigma\right)\leq \sf{C}v^{-r}$. Applying Fubini's theorem, this implies that
\begin{align*}
\int_{\partial X}\sum_{r=1}^\infty\phi\left(\frac{\Delta_{\xi,r}}{\alpha}\right)v^{r}dH(\xi)&=\sum_{r=1}^{\infty}\sum_{\sigma\in A_r}\int_{\left\{\xi:\theta_\xi[r]=\sigma\right\}}\phi\left(\frac{df(\sigma)}{\alpha}\right)v^{r}dH(\xi)\\
&\leq  \sf{C}\sum_{r=1}^{\infty}\sum_{\sigma\in A_r}\phi\left(\frac{df(\sigma)}{\alpha}\right)\leq \sf{C}\sum_{\sigma\in X_1}\phi\left(\frac{df(\sigma)}{\alpha}\right).
\end{align*}
When $\|df\|_\phi<\infty$, it suffices to take $\alpha>0$ so that the sum on the right hand side of (\ref{existence2}) is less than or equal to $\sf{C}^{-1}$.
\end{proof}

\medskip
\noindent Notice that 
$$F(\xi,r)=\sum_{k=r}^{\infty}\Delta_{\xi,k}=\sum_{k=0}^\infty\Delta_{\xi,r+k}=\left(\sum_{k=0}^\infty (S^*)^kDF\right)(\xi,r),$$
that is, $F=T(DF)$. As an immediate consequence, an inequality ``\`a la Strichartz'' holds (compare with \cite[Lemma 3.3]{Bourdon-Pajot04} and \cite{Strichartz83}): there exists a constant $\sf{C}:=\|T\|$, depending only on $\phi$ and $v$, such that for any $f:X_0\to\R$ with $df\in\Lp^\phi(X_1)$, we have
\begin{equation}\label{strichartz}
\|F\|_\phi\leq C\|DF\|_\phi.
\end{equation}
This will be a key point later in the proof of Theorem \ref{thetracemap}.

\medskip
\noindent We summarize the consequences of inequality (\ref{strichartz}) in the next corollary.  For each $r\geq 1$, define the function $f_r:\partial X\to \R$ by setting $f_r(\xi)=f(\theta_\xi(r-1))$. Note that the image of $f_r$ is a finite set.

\begin{corollary}[The radial limit]\label{extension}
Let $f:X_0\to\R$ be a function such that $df\in \Lp^\phi(X_1)$. Then the radial limit $f_\infty(\xi)$ exists $H$-almost everywhere. Moreover: 
\begin{enumerate}
\item the function $f_\infty\in L^\phi(\partial X,H)$,
\item $f_r\to f_\infty$ in $L^\phi(\partial X)$ when $r\to\infty$, and $\|f_\infty-f(w)\|_\phi\lesssim \|df\|_\phi$.
\end{enumerate}
\end{corollary}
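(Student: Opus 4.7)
The plan is to leverage the inequality $\|F\|_\phi \leq \sf{C}\|DF\|_\phi \lesssim \|df\|_\phi$ coming from the Strichartz estimate (\ref{strichartz}) and Lemma \ref{existence3}, and to identify the trace $F(\cdot,1)$ on $\partial X$ with $f_\infty - f(w)$. Telescoping along the ray $\theta_\xi$ gives $f_r(\xi) = f(w) + \sum_{k=1}^{r-1}\Delta_{\xi,k}$, so formally $f_\infty(\xi) - f_r(\xi) = F(\xi,r)$ and $f_\infty(\xi) = f(w) + F(\xi,1)$, provided the series converges pointwise.

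First I would establish the $H$-a.e.\ existence of the radial limit. By Lemma \ref{existence3} there exists $\alpha>0$ such that $\sum_{r\geq 1} v^r\int_{\partial X}\phi(\Delta_{\xi,r}/\alpha)\,dH(\xi) \leq 1$, so by Fubini the sequence $r\mapsto v^r\phi(\Delta_{\xi,r}/\alpha)$ is summable, and in particular bounded by some $\sf{C}_\xi$, for $H$-a.e.\ $\xi$. The lower doubling bound of Lemma \ref{lowerdoubling}, which gives $\phi(u) \gtrsim u^\sf{K}$ for small $u$, then yields $|\Delta_{\xi,r}| \lesssim \alpha\,\sf{C}_\xi^{1/\sf{K}}\, v^{-r/\sf{K}}$ for $r$ large. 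Since $v^{1/\sf{K}} > 1$, this is geometrically summable, so $\sum_k \Delta_{\xi,k}$ converges absolutely for $H$-a.e.\ $\xi$; thus $f_\infty(\xi)$ exists and equals $f(w) + F(\xi,1)$ a.e.

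The $L^\phi$-bounds then follow directly from the definition of the Luxembourg norm on $\partial X\times \N$. For any $\alpha>0$ and $r\geq 1$, retaining only the $r$-th term in the series defining the modular gives
\[
v^{r}\int_{\partial X}\phi\!\left(\frac{F(\xi,r)}{\alpha}\right) dH(\xi) \leq \sum_{r'\geq 1} v^{r'}\int_{\partial X}\phi\!\left(\frac{F(\xi,r')}{\alpha}\right) dH(\xi),
\]
so for any $\alpha > \|F\|_\phi$ the right-hand side is at most $1$ and one obtains $\int_{\partial X}\phi(F(\xi,r)/\alpha)\,dH(\xi) \leq v^{-r}$. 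Setting $r=1$ yields $\|F(\cdot,1)\|_\phi \leq \|F\|_\phi \lesssim \|df\|_\phi$, which together with the identification from step one proves (1) and the norm inequality in (2). For the convergence $f_r \to f_\infty$ in $L^\phi(\partial X)$, I would use $f_\infty - f_r = F(\cdot,r)$; the displayed inequality shows that the modular $\int\phi(F(\xi,r)/\alpha)\,dH(\xi)$ is bounded by $v^{-r}\to 0$ as $r\to\infty$, and since $\phi$ is doubling, modular and norm convergence are equivalent, so $\|F(\cdot,r)\|_\phi \to 0$.

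The main obstacle in this plan is the passage from modular control of the tail $F$ on $\partial X\times\N$ to genuine pointwise convergence of the series $\sum_k \Delta_{\xi,k}$ (as opposed to mere subsequential convergence): this is precisely the role of Lemma \ref{lowerdoubling}, which upgrades the Orlicz-modular bound $v^r\phi(\Delta_{\xi,r}/\alpha) \leq \sf{C}_\xi$ to an absolute geometric bound $|\Delta_{\xi,r}| \lesssim v^{-r/\sf{K}}$. A minor technicality is that Lemma \ref{lowerdoubling} is stated under $t_0=+\infty$, but the lower polynomial decay $\phi(u)\gtrsim u^\sf{K}$ near $u=0$ only requires the original doubling hypothesis, which suffices since the estimate is applied for $r$ large, where $\Delta_{\xi,r}/\alpha$ is already small.
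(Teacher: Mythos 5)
Your argument is correct and follows the paper's strategy: the Strichartz-type inequality $\|F\|_\phi\lesssim\|df\|_\phi$ from (\ref{strichartz}), the identification $F(\cdot,r)=f_\infty-f_r$ with $f_1\equiv f(w)$, and the use of the doubling property of $\phi$ to pass from modular to norm convergence. What you add, and what the paper's proof glosses over, is an explicit argument for the $H$-a.e.\ existence of the radial limit. The paper opens its proof with ``note that $F(\cdot,r)=f_\infty-f_r$'', which already presupposes pointwise a.e.\ convergence of the series $\sum_k\Delta_{\xi,k}$; justifying this through the operator identity $F=T(DF)$ would require applying $T$ to $|DF|$ and a monotone-convergence argument to upgrade $L^\phi$-convergence of the Neumann series to a.e.\ convergence, a step the paper omits. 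Your route is more direct: Tonelli applied to Lemma \ref{existence3} gives, for a.e.\ $\xi$, the uniform modular bound $v^r\phi(\Delta_{\xi,r}/\alpha)\leq\sf{C}_\xi$, and the lower polynomial decay of a doubling $\phi$ near zero --- via Lemma \ref{lowerdoubling}, or equivalently the observation in Section \ref{preOrlicz} that doubling implies $\phi(t)\geq\sf{c}\,t^p$ for small $t$, which sidesteps the $t_0=+\infty$ normalization you flag as a technicality --- converts this into the geometric estimate $|\Delta_{\xi,r}|\lesssim v^{-r/\sf{K}}$ for $r$ large. Since $v^{1/\sf{K}}>1$, the series converges absolutely a.e.\ and $f_\infty=f(w)+F(\cdot,1)$ is well defined. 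The remainder of your proof (retaining the $r$-th term of the modular, bounding it by $v^{-r}$, and invoking the modular-to-norm equivalence for doubling $\phi$) reproduces the paper's computation, with the harmless variant of taking $\alpha>\|F\|_\phi$ and passing to the limit rather than taking $\alpha=\|F\|_\phi$ outright.
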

\begin{proof}
First, note that $F(\cdot,r)=f_\infty-f_r$. Then, for any $\alpha>0$, we have
$$\sum_{r=1}^\infty\left[\int_{\partial X}\phi\left(\frac{f_\infty(\xi)-f_r(\xi)}{\alpha}\right)dH(\xi)\right]v^r=\int_{\partial X}\sum_{r=1}^\infty\phi\left(\frac{F(\xi,r)}{\alpha}\right)v^rdH(\xi).$$
By letting $\alpha=\|F\|_\phi$ and noticing that $f_1\equiv f(w)$, we obtain
$$\|f_\infty-f(w)\|_\phi\leq\|F\|_\phi\lesssim \|df\|_\phi.$$
Furthermore, taking $\alpha=1$ in the above equality, we see that
$$\int_{\partial X}\phi\left(f_\infty(\xi)-f_r(\xi)\right)dH(\xi)\to 0,\text{ when }r\to\infty.$$
Since $\phi$ is doubling, $\|f_\infty-f_r\|_\phi\to 0$ when $r\to\infty$, and the limit is also in $L^\phi(\partial X)$.
\end{proof}

\begin{remark} The existence of the radial limit can be proved under the slightly weaker assumption of $\phi$ being of polynomial decay. The cost is that the limit is in $L^1(\partial X,H)$. On the other hand, the radial limit does not necessarily exists when $\phi$ decays very fast (like the function $\phi(t)=t^p\exp(-1/t)$, $p\geq 1$). The constraints for a function to be $\phi$-integrable in that case are very weak near infinity. Nevertheless, this kind of functions could be useful in situations were the boundary is not well defined, for instance, in absence of hyperbolicity, or when the space has infinite growth ($v=\infty$). 
\end{remark}

\subsection{Proof of Theorem \ref{thetracemap}}

Theorem \ref{extension} ensures the existence of a continuous map
$$\mathcal{T}:\left\{f:X_0\to\R:df\in\Lp^\phi(X_1)\right\}/\R\to L^\phi(\partial X,H)/\R,\ f\mapsto f_\infty.$$
In this section we determine the image and the kernel of $\mathcal{T}$. 

\begin{proposition}[The kernel of $\mathcal{T}$]\label{nucleo}
The kernel of $\mathcal{T}$ is precisely $(\Lp^\phi(X_0)\oplus\R)/\R$. In particular, the $\Lp^\phi$-cohomology is reduced in degree one.
\end{proposition}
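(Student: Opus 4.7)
My plan is to verify the two inclusions in the claimed identification $\ker \mathcal{T} = (\Lp^\phi(X_0) \oplus \R)/\R$ separately, and then to deduce reducedness from the continuity of $\mathcal{T}$. For the inclusion $\Lp^\phi(X_0) \oplus \R \subseteq \ker\mathcal{T}$, linearity reduces the statement to proving $g_\infty = 0$ a.e.\ whenever $g\in \Lp^\phi(X_0)$; note that $dg \in \Lp^\phi(X_1)$ by the bounded geometry hypothesis. Since $H(\{\xi : \theta_\xi(r-1)=x\}) \lesssim v^{-(r-1)}$ for each $x\in\Sigma_{r-1}$, one has
\[
\int_{\partial X}\phi\!\left(\frac{g_r(\xi)}{\alpha}\right)dH(\xi) \;\lesssim\; v^{-(r-1)} \sum_{x\in\Sigma_{r-1}}\phi\!\left(\frac{g(x)}{\alpha}\right),
\]
which tends to zero as $r\to\infty$ for any $\alpha$ small enough that $\sum_x\phi(g(x)/\alpha)<\infty$. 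The doubling hypothesis on $\phi$ then gives $g_r\to 0$ in $L^\phi(\partial X)$, and combined with $g_r\to g_\infty$ in $L^\phi$ from Corollary \ref{extension}, this forces $g_\infty = 0$ a.e.

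For the reverse inclusion, assume $f_\infty\equiv c$ a.e.\ and put $g = f - c$. Then $g_\infty \equiv 0$ a.e.\ and $dg\in \Lp^\phi(X_1)$, so the identity $F(\xi,r) = g_\infty(\xi)-g_r(\xi)$ reduces to $F(\xi,r) = -g(\theta_\xi(r-1))$ a.e., while the Strichartz-type bound \eqref{strichartz} gives $\|F\|_\phi \lesssim \|dg\|_\phi < \infty$. In particular
\[
\sum_{r=1}^\infty v^r \int_{\partial X}\phi\!\left(\frac{g(\theta_\xi(r-1))}{\alpha}\right)dH(\xi) \;\leq\; 1
\]
for some $\alpha$ comparable to $\|dg\|_\phi$. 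I would now convert this boundary estimate into discrete $\Lp^\phi$-summability over $X_0$ in two steps. First, fix a subset $S\subset X_0$ of \emph{well visible} vertices such that each $y\in S$ at level $r$ satisfies $H(\{\xi:\theta_\xi(r)=y\}) \asymp v^{-r}$, every vertex of $X_0$ lies at uniformly bounded distance from some $y\in S$, and every vertex is close to at most boundedly many elements of $S$; such $S$ can be produced from a maximal $v^{-r}$-separated net in $\partial X$ at each level $r$ pushed to $X$ through the section $\theta$, using the Ahlfors regularity of $\varrho_w$, $\delta$-hyperbolicity, and the quasi-starlike hypothesis. The integral estimate above then yields $\sum_{y\in S}\phi(g(y)/\alpha)<\infty$. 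Second, for each $x\in X_0$ fix a path of length $\leq \ell_0$ joining $x$ to a nearby $y(x)\in S$; writing $g(x) = g(y(x)) + \sum_\sigma\varepsilon_\sigma\, dg(\sigma)$ along that path and applying the convexity of $\phi$ as in \eqref{keyarg}, summation over $x$ together with the bounded-multiplicity properties of $S$ and the bounded geometry of $X$ produces
\[
\sum_{x\in X_0}\phi\!\left(\frac{g(x)}{\alpha'}\right) \;\lesssim\; \sum_{y\in S}\phi\!\left(\frac{g(y)}{\alpha}\right) + \sum_{\sigma\in X_1}\phi\!\left(\frac{dg(\sigma)}{\alpha}\right) \;<\;\infty,
\]
which is exactly $g\in \Lp^\phi(X_0)$.

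The reducedness in degree one is then a formal consequence: $\mathcal{T}$ is continuous by Corollary \ref{extension} and its target $L^\phi(\partial X,H)/\R$ is Hausdorff, so $\ker\mathcal{T}$ is closed in the quotient space endowed with the seminorm $\|df\|_\phi$; since we have just identified $\ker\mathcal{T}$ with $(\Lp^\phi(X_0)\oplus\R)/\R$, this subspace is already closed, so no passage to closure is needed in the definition of $\Lp^\phi H^1(X)$. The main technical obstacle is the construction of $S$: it must simultaneously cover $X_0$ with bounded multiplicity and be sufficiently spread out in $\partial X$ to ensure $H(E_y)\asymp v^{-|y|}$ uniformly. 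Given $S$, the remaining manipulations are routine Jensen/convexity estimates patterned on the proof of Theorem \ref{invariance} and on the Strichartz-type inequality \eqref{strichartz}.
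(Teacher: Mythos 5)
Your overall structure (two inclusions plus continuity of $\mathcal{T}$ giving closedness of the kernel) matches the paper, and your treatments of the easy inclusion and of reducedness are sound. But the harder inclusion $\ker\mathcal{T}\subseteq(\Lp^\phi(X_0)\oplus\R)/\R$ is where you diverge genuinely from the paper, and this is also where your argument has a gap. The paper's Lemma~\ref{bigF} introduces the averaged function
\[
\tilde{F}(\xi,r)=\frac{1}{m_{\xi,r}}\sum_{x\in\Sigma_r}f(x)\,\I_{\mho(x)}(\xi),
\]
built from the \emph{full} family of shadows $\{\mho(x):x\in\Sigma_r\}$ rather than from the section $\theta$, and then shows $\|\tilde F-F\|_\phi\lesssim\|df\|_\phi$ and $f\in\Lp^\phi(X_0)\Leftrightarrow\tilde F\in L^\phi(\partial X\times\N)$ by convexity and the bounded-overlap properties of shadows. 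You instead try to pick a sparse net $S$ of ``well-visible'' vertices and compare $F$ to $g|_S$ directly.

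The gap is in the construction of $S$. Pushing a maximal $v^{-r}$-separated boundary net $\{\xi_i\}$ through $\theta$ gives vertices $y_i=\theta_{\xi_i}(r)$, but the sets $E_{y_i}^{(r)}:=\{\xi:\theta_\xi(r)=y_i\}$ need not satisfy $H(E_{y_i}^{(r)})\gtrsim v^{-r}$. The section $\theta$ is just an arbitrary measurable selection: you know $E_{y_i}^{(r)}\subset\mho(y_i)$, which gives the upper bound, and you know $\xi_i\in E_{y_i}^{(r)}$, which only says the set is nonempty (a single point of measure zero). Nothing forces $\theta$ to route a $\gtrsim v^{-r}$ chunk of boundary mass through $y_i$ rather than through one of its bounded-distance neighbours. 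To repair this you need a pigeonhole step: the ball $B(\xi_i,\sf{c}v^{-r})$ has measure $\gtrsim v^{-r}$, and for every $\xi$ in this ball the vertex $\theta_\xi(r)$ lies within uniformly bounded distance of $\theta_{\xi_i}(r)$; since there are at most $\sf{N}_X(\sf{C})$ such candidate vertices, at least one of them -- not necessarily $\theta_{\xi_i}(r)$ itself -- receives a $\gtrsim v^{-r}$ share of the measure, and that one goes into $S$. The paper's averaging device sidesteps this selection problem entirely, which is precisely its purpose; your route is workable but the step ``pushed to $X$ through the section $\theta$'' as written does not produce a net with the required lower bound on $H(E_y^{(r)})$.
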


\noindent The proof follows from the next lemma which compares the $\phi$-norms of $f$ and $F$.

\begin{lemma}\label{bigF}
Let $\phi$ be any Young function, and let $f:X_0\to \R$ be such that $df\in\Lp^\phi(X_1)$ and $f_\infty=0$ $H$-almost everywhere. Then, $f\in \Lp^\phi(X_0)$ if and only if $F\in L^\phi(\partial X\times \N)$.
\end{lemma}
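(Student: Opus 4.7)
The plan is to compute $\|F\|_\phi$ explicitly in terms of the values of $f$ at vertices, and then compare it with $\|f\|_\phi$. From the proof of Corollary \ref{extension} one has $F(\xi,r) = f_\infty(\xi) - f_r(\xi)$, so the hypothesis $f_\infty=0$ $H$-a.e.\ yields $F(\xi,r)=-f(\theta_\xi(r-1))$ a.e. Fubini and the definition of the product measure on $\partial X\times\N$ then give, for every $\alpha>0$,
\begin{equation*}
\int_{\partial X}\sum_{r\geq 1}\phi\!\left(\frac{F(\xi,r)}{\alpha}\right)v^r\,dH(\xi) \;=\; \sum_{y\in X_0} v^{|y|+1}\,\mu_y(|y|)\,\phi\!\left(\frac{f(y)}{\alpha}\right),
\end{equation*}
where $\mu_y(r):=H\bigl(\{\xi\in\partial X:\theta_\xi(r)=y\}\bigr)$. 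The lemma therefore reduces to comparing the weights $v^{|y|+1}\mu_y(|y|)$ with the constant $1$, uniformly in $y$.

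For the direction $f\in\Lp^\phi(X_0)\Rightarrow F\in L^\phi$, the inclusion $\{\xi:\theta_\xi(r)=y\}\subset\mho(y)$ together with the shadow estimate $H(\mho(y))\asymp v^{-|y|}$ yields $v^{|y|+1}\mu_y(|y|)\leq \sf{C}$. Plugging this in, the right hand side above is bounded by $\sf{C}\sum_y\phi(f(y)/\alpha)$, and a Luxembourg-norm scaling (the convexity inequality $\phi(t/\lambda)\leq \lambda^{-1}\phi(t)$ for $\lambda\geq 1$) of the kind used in Section \ref{simplicial} produces $\|F\|_\phi\lesssim \|f\|_\phi$ for any Young function $\phi$; no doubling is needed here.

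The converse $F\in L^\phi\Rightarrow f\in\Lp^\phi(X_0)$ needs the matching lower bound $v^{|y|+1}\mu_y(|y|)\gtrsim 1$, and this is where the main obstacle lies: an arbitrary measurable section may leave many vertices unvisited. My plan is to exploit the freedom in choosing the section $\theta$, constructing it inductively and measurably so that for every $r\geq 1$ and every $\xi\in\partial X$, the vertex $\theta_\xi(r)$ is one of the uniformly boundedly many vertices $y\in\Sigma_r$ with $\xi\in\mho(y)$ (available thanks to quasi-starlikeness and the existence of a ray through $w$ near any point of $X$). Bounded geometry implies that the shadow cover of $\partial X$ by $\{\mho(y):y\in\Sigma_r\}$ has uniformly bounded multiplicity, so the fibers $\{\xi:\theta_\xi(r)=y\}$ form a partition of $\partial X$ with $H(\{\xi:\theta_\xi(r)=y\})\asymp v^{-r}$ uniformly in $y\in\Sigma_r$, giving the desired lower bound and hence $\|f\|_\phi\lesssim \|F\|_\phi$. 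The subtle point is verifying measurability of this choice across all radii while respecting the constraint that $(\theta_\xi(r))_r$ should be a genuine geodesic ray; this should follow from a standard measurable selection argument in the Polish space $\mathcal{R}_w$.
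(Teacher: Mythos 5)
Your plan reduces to showing the two-sided estimate $v^{|y|+1}\mu_y(|y|)\asymp 1$, where $\mu_y(|y|)=H(\{\xi:\theta_\xi(|y|)=y\})$. The upper bound and the resulting inequality $\|F\|_\phi\lesssim\|f\|_\phi$ are correct (and indeed need no doubling). The converse direction, however, has a genuine gap: the lower bound $v^{|y|+1}\mu_y(|y|)\gtrsim 1$ cannot hold for every vertex $y$, no matter how cleverly you choose the section $\theta$. Quasi-starlikeness only guarantees that each vertex is at uniformly bounded distance from \emph{some} geodesic ray, not that it actually lies on one. A vertex $y$ that lies on no geodesic emanating from $w$ has $\mu_y(|y|)=0$, so the weight of $\phi(f(y)/\alpha)$ in your identity vanishes and the value $f(y)$ is completely invisible to $F$. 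Even ignoring that, the section is a single measurable map into $\mathcal{R}_w$, so the fibers $\{\xi:\theta_\xi(r)=y\}$ at different radii are tightly coupled (they must be nested along rays), and one cannot independently allocate measure $\asymp v^{-r}$ to each $y\in\Sigma_r$ for all $r$ simultaneously; the "measurable selection" invoked at the end does not address this interdependence.

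The step you are missing is exactly the one that makes the statement nontrivial: you must use the hypothesis $\|df\|_\phi<\infty$ to absorb the section-dependence. The paper's proof replaces $F(\xi,r)$ by the section-independent average
$$\tilde F(\xi,r)=\frac{1}{m_{\xi,r}}\sum_{x\in\Sigma_r}f(x)\I_{\mho(x)}(\xi),$$
which ranges over \emph{all} vertices of $\Sigma_r$ whose shadow contains $\xi$, not just the one chosen by $\theta$. Because $\theta_\xi(r)$ is within uniformly bounded distance of each such $x$, convexity of $\phi$ gives $\|\tilde F-F\|_\phi\lesssim\|df\|_\phi<\infty$, so $F\in L^\phi$ iff $\tilde F\in L^\phi$. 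Comparing $\tilde F$ with $f$ is then a genuine two-sided computation because the shadows $\{\mho(x):x\in\Sigma_r\}$ cover $\partial X$ with bounded multiplicity, visiting every vertex with measure $\asymp v^{-r}$. Your explicit-formula viewpoint is a nice way to see the easy direction, but without this smoothing step the converse cannot be closed along the route you propose.
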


\begin{proof}[Proof of Proposition \ref{nucleo}]
It is clear that if $f\in\Lp^\phi(X_0)$, then $f_\infty\equiv0$. Suppose that $f_\infty=0$ $H$-a.e.\ and that $df\in\Lp^\phi(X_1)$. Then, $F\in L^\phi(\partial X\times\N)$ by Lemma \ref{existence3} and inequality (\ref{strichartz}). Therefore, $f\in \Lp^\phi(X_0)$ by Lemma \ref{bigF}.
\end{proof}

\noindent We will need for the proof of Lemma \ref{bigF}, the fact that the shadows $\{\mho(x):x\in \Sigma_r\}$ form a family of coverings of $\partial X$ with the following nice properties:
\begin{enumerate}
\item There is a small constant $\sf{c}>0$ such that for each $r\in\N$, the shadow $\mho(x)$, with $x\in \Sigma_r$, contains a ball $B(x)$ of radius $\sf{c}v^{-r}$, and such that for each $r\in\N$, the balls $\{B(x):x\in\Sigma_r\}$ are pairwise disjoint. 
\item There is a constant $\sf{D}$ such that if $\xi\in \mho(x)$, with $x\in\Sigma_r$, then $|\theta_\xi(r)-x|\leq \sf{D}$.
\item For any $\xi\in \partial X$ and $r\in\N$, the number 
\begin{equation}\label{overlap}
m_{\xi,r}=\#\left\{x\in \Sigma_r: \xi\in \mho(x)\right\}\leq \sf{M},
\end{equation}
for some uniform constant $\sf{M}$.
\end{enumerate}

\begin{proof}[Proof of Lemma \ref{bigF}]
Note that since $f_\infty=0$ almost everywhere, then $F(\xi,r)=f_r(\xi)$ almost everywhere. Denote by $V(\xi,r)$ the ball of radius $\sf{D}$ in $X$ centered at $\theta_\xi(r)$. By the convexity of $\phi$, for any $\alpha>0$, we have
\begin{equation}\label{discretevscont}
\phi\left(\frac{f(x)-f_{r}(\xi)}{\sf{D}\alpha}\right)\leq \sum_{\sigma\subset V(\xi,r)}\phi\left(\frac{df(\sigma)}{\alpha}\right)\text{ for all } x\in \Sigma_r,\ \xi\in \mho(x),\ r\in\N.
\end{equation}
Define $\tilde{F}:\partial X\times \N\to \R$ as the average $\tilde{F}(\xi,r)=\frac{1}{m_{\xi,r}}\sum_{x\in \Sigma_r}f(x)\I_{\mho(x)}(\xi)$. The convexity of $\phi$ again and (\ref{discretevscont}) imply that
$$\phi\left(\frac{\tilde{F}(\xi,r)-F(\xi,r)}{\sf{D}\alpha}\right)\leq \sum_{\sigma\subset V(\xi,r)}\phi\left(\frac{df(\sigma)}{\alpha}\right), \text{ for any }\xi\in \partial X,\ r\in \N.$$
Note that if $\xi\in \mho(y)$ for $y\in \Sigma_r$, then $V(\xi,r)\subset B(y,2\sf{D})$. Integrating the last inequality in $\mho(y)$ first, and then summing over $y\in X_0$, we obtain 
\begin{align*}
\sum_{r=1}^\infty\int_{\partial X}\phi\left(\frac{\tilde{F}(\xi,r)-F(\xi,r)}{\sf{D}\alpha}\right)v^{r}dH(\xi)&\leq \sum_{r=1}^\infty\sum_{y\in \Sigma_r}\int_{\mho(y)}\phi\left(\frac{\tilde{F}(\xi,r)-F(\xi,r)}{\sf{D}\alpha}\right)v^{r}dH(\xi)\\
&\lesssim\sum_{r=1}^\infty\sum_{y\in \Sigma_r}\sum_{\sigma\in B(y,2\sf{D})}\phi\left(\frac{df(\sigma)}{\alpha}\right)\\
&\lesssim \sum_{\sigma\in X_1}\phi\left(\frac{df(\sigma)}{\alpha}\right),\end{align*}
where the multiplicative constants depend only on the Ahlfors regularity  of $H$, the degree of $X$, and $\phi$. Therefore, $\|\tilde{F}-F\|_{\phi}\lesssim\|df\|_{\phi}<+\infty$. That is, $F\in L^\phi(\partial X\times\N)$ if, and only if, $\tilde{F}$ is. 

\medskip
\noindent We end the proof by comparing the norms of $f$ and $\tilde{F}$. Recall from the definition of the constant $\sf{M}$ in (\ref{overlap}), that we have $m_{\xi,r}\leq \sf{M}$. Then, if $x\in \Sigma_r$, we have
\begin{align*}
\phi\left(\frac{f(x)}{\sf{M}\alpha}\right)&\lesssim\int_{B(x)}\phi\left(\frac{\tilde{F}(\xi,r)}{\alpha}\right)v^rdH(\xi)\\
&\leq\int_{\mho(x)}\phi\left(\frac{\tilde{F}(\xi,r)}{\alpha}\right)v^rdH(\xi)\lesssim\sum_{y\in B(x,2\sf{D})}\phi\left(\frac{f(y)}{\alpha}\right).\end{align*}
Summing over $x\in \Sigma_r$ first, and then over $r\in \N$, the same arguments as before show that $f\in \Lp^\phi(X_0)$ if, and only if, $\tilde{F}\in L^\phi(\partial X\times\N)$. This finishes the proof.
\end{proof}

\noindent We now describe the image of $\mathcal{T}$. In the proof of the next proposition, we use the following estimate for the density of the measure $\lambda$ in the space of pairs $\partial^2 X$ (see the Introduction):
\begin{equation}\label{invdistance}
\frac{1}{\rho(\xi,\zeta)^{2Q}}\asymp \sum_{\sigma\in G_1}\frac{\I_{\mho(\sigma_-)}(\xi)\I_{\mho(\sigma_+)}(\zeta)}{H\left(\mho(\sigma_-)\right)H\left(\mho(\sigma_+)\right)}\end{equation}
See \cite[Lemma 3.5]{Bourdon-Pajot04} for the proof.

\begin{proposition}[The image of $\mathcal{T}$]\label{image}
The image of $\mathcal{T}$ is precisely the Orlicz-Besov space $B^\phi(\partial X,\varrho_w)/\R$, and $\mathcal{T}$ is continuous in the Orlicz-Besov norm.
\end{proposition}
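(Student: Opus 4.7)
The plan has two symmetric halves: (A) a trace estimate $\langle f_\infty\rangle_\phi \lesssim \|df\|_\phi$ showing $\mathcal{T}$ lands in $B^\phi(\partial X,\varrho_w)/\R$ continuously; and (B) an extension procedure that, starting from $u\in B^\phi(\partial X,\varrho_w)$, produces $f:X_0\to\R$ with $df\in\ell^\phi(X_1)$ and $\mathcal{T}[f]=[u]$. The key technical input in both halves is the pairing identity \eqref{invdistance}, which converts the singular integral against $d\lambda$ on $\partial^2 X$ into a weighted discrete sum over the edges of $X$, which is exactly the combinatorics already controlled by the shadow machinery and the radial shift operator $T$.

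For (A), fix $f$ with $df\in\ell^\phi(X_1)$; Corollary \ref{extension} gives $f_\infty\in L^\phi(\partial X)$. For distinct $\xi,\zeta\in\partial X$ let $x_{\xi,\zeta}$ be the $X$-projection of $w$ onto a geodesic $(\xi,\zeta)$, and set $r=r_{\xi,\zeta}\asymp|x_{\xi,\zeta}|$. By $\delta$-hyperbolicity the points $\theta_\xi(r-1)$ and $\theta_\zeta(r-1)$ lie within a uniformly bounded distance $\sf{K}_0$ of $x_{\xi,\zeta}$, so they are joined by a path $\gamma_{\xi,\zeta}$ in $X_1$ of length at most $\sf{L}_0$ contained in a ball $B(x_{\xi,\zeta},\sf{R}_0)$. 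Writing
\[
f_\infty(\xi)-f_\infty(\zeta)=F(\xi,r)-F(\zeta,r)+\sum_{\tau\in\gamma_{\xi,\zeta}}\pm df(\tau),
\]
convexity of $\phi$ splits the estimate into three pieces. Using \eqref{invdistance} to bound $d\lambda$ by the weighted sum $\sum_\sigma H(\mho(\sigma_-))^{-1}H(\mho(\sigma_+))^{-1}\I_{\mho(\sigma_-)\times\mho(\sigma_+)}\,dH\otimes dH$, and observing that for $(\xi,\zeta)\in\mho(\sigma_-)\times\mho(\sigma_+)$ the integer $r$ agrees with $|\sigma_-|$ up to a bounded additive constant, each of the two $F$-pieces becomes $\lesssim\sum_r v^r\int_{\partial X}\phi(F(\cdot,r)/\alpha)\,dH$ after using $H(\mho(\sigma_\pm))\asymp v^{-|\sigma_\pm|}$ and the bounded multiplicity \eqref{overlap} of the cover $\{\mho(\sigma_-)\}_{\sigma\in A_r}$; this is $\leq 1$ once $\alpha\asymp\|F\|_\phi$. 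By \eqref{strichartz} together with Lemma \ref{existence3}, $\|F\|_\phi\lesssim\|DF\|_\phi\lesssim\|df\|_\phi$. The $df$-piece is handled directly: since $\gamma_{\xi,\zeta}$ lies in a ball of bounded radius about $\sigma_-$ containing a uniformly bounded number of edges, after summing we get $\lesssim\sum_{\tau\in X_1}\phi(df(\tau)/\alpha)$, which is $\leq 1$ for $\alpha\asymp\|df\|_\phi$. Absorbing the doubling scaling via \eqref{equiv}, this yields $\langle f_\infty\rangle_\phi\lesssim\|df\|_\phi$, proving continuity and the inclusion $\operatorname{Im}\mathcal{T}\subset B^\phi(\partial X,\varrho_w)/\R$.

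For (B), given $u\in B^\phi(\partial X,\varrho_w)$ define
\[
f(x):=\frac{1}{H(\mho(x))}\int_{\mho(x)}u\,dH,\qquad x\in X_0.
\]
Since $u\in L^\phi\subset L^1_{\mathrm{loc}}(\partial X)$, this is well defined. For $\sigma\in X_1$,
\[
df(\sigma)=\fint_{\mho(\sigma_+)}\fint_{\mho(\sigma_-)}\bigl(u(\xi)-u(\zeta)\bigr)\,dH(\xi)dH(\zeta),
\]
and Jensen's inequality gives $\phi(df(\sigma)/\alpha)\leq\fint_{\mho(\sigma_+)\times\mho(\sigma_-)}\phi((u(\xi)-u(\zeta))/\alpha)\,dH\otimes dH$. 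Summing over $\sigma\in X_1$ and invoking \eqref{invdistance} in the reverse direction yields
\[
\sum_{\sigma\in X_1}\phi\bigl(df(\sigma)/\alpha\bigr)\lesssim \int_{\partial^2 X}\phi\!\left(\frac{u(\xi)-u(\zeta)}{\alpha}\right)d\lambda(\xi,\zeta),
\]
which is $\leq 1$ for $\alpha$ a uniform multiple of $\langle u\rangle_\phi$; hence $\|df\|_\phi\lesssim\langle u\rangle_\phi$ via \eqref{equiv}. Applying Corollary \ref{extension} produces $f_\infty\in L^\phi(\partial X)$. To identify $f_\infty=u$ almost everywhere, note that $H$ is Ahlfors $Q$-regular, hence doubling, and the shadow $\mho(\theta_\xi(r-1))$ is sandwiched between two balls around $\xi$ of radius $\asymp v^{-r}$; the generalized Lebesgue differentiation theorem for doubling measures then gives $f(\theta_\xi(r-1))\to u(\xi)$ for $H$-a.e.\ $\xi$, so $\mathcal{T}[f]=[u]$.

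The main obstacle is the geometric bookkeeping in (A): choosing $r_{\xi,\zeta}$ so that it genuinely matches the edge-level index $|\sigma_-|$ appearing in \eqref{invdistance} up to a controlled additive constant, and absorbing this shift into the norm via the doubling estimate in \eqref{equiv}. Once this matching is made precise, (A) and (B) combine to give the advertised isomorphism, and the continuity claim for $\mathcal{T}$ drops out of the estimate in (A).
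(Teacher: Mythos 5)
Your plan reproduces the paper's proof essentially step for step: the extension direction averages $u$ over shadows and applies Jensen's inequality together with (\ref{invdistance}) and Lebesgue differentiation, while the trace direction splits $f_\infty(\xi)-f_\infty(\zeta)$ into two tail pieces controlled by (\ref{strichartz}) and Lemma \ref{existence3} plus a near piece bounded by $\|df\|_\phi$, again via (\ref{invdistance}) and the bounded-multiplicity bound (\ref{overlap}). Your use of an explicit bounded-length path $\gamma_{\xi,\zeta}$ for the near piece, rather than the single term $\phi(df(\sigma)/\alpha)$ appearing in the paper's display, is a slightly more careful rendering of the same estimate, since the measurable section $\theta_\xi$ need only pass near $\sigma_-$ rather than through it.
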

\begin{proof}
Let $u$ be a measurable function on $\partial X$ with $\langle u\rangle_\phi<\infty$. Consider the function $f:X_0\to\R$ given by the averages of $u$ over the shadows:
$$f(x):=\dashint_{\mho(x)}u(\xi)dH(\xi).$$
By Lebesgue's differentiation theorem, the radial limit $f_\infty$ exists $H$-almost everywhere and is equal to $u$. We must show that $df\in \Lp^\phi(X_1)$. Let $\alpha>0$, by Jensen's inequality
\begin{align*}
\phi\left(\frac{df(\sigma)}{\alpha}\right)&=\phi\left(\dashint_{\mho(\sigma_-)\times\mho(\sigma_+)}\frac{u(\xi)-u(\zeta)}{\alpha}dH(\xi) dH(\zeta)\right)\\
&\leq \dashint_{\mho(\sigma_-)\times\mho(\sigma_+)}\phi\left(\frac{u(\xi)-u(\zeta)}{\alpha}\right)dH(\xi) dH(\zeta).
\end{align*}
Summing over $\sigma\in X_1$, and using the inequalities (\ref{invdistance}), we obtain
$$\sum_{\sigma\in X_1}\phi\left(\frac{df(\sigma)}{\alpha}\right)\lesssim \int_{\partial^2 X}\phi\left(\frac{u(\xi)-u(\zeta)}{\alpha}\right)d\lambda(\xi,\zeta).$$
This shows that $df\in \Lp^\phi(X_1)$ and $\|df\|_\phi\lesssim \langle u\rangle_\phi$.

\medskip
\noindent Conversely, let $f:X_0\to\R$ be such that $df\in\Lp^\phi(X_1)$. Let $\alpha>0$ and $\sigma\in X_1$. Then, by the convexity of $\phi$, we have
\begin{align*}
\dashint_{\mho(\sigma_-)\times\mho(\sigma_+)}\phi\left(\frac{f_\infty(\xi)-f_\infty(\zeta)}{3\alpha}\right)dH(\xi)dH(\zeta)\leq&\\
\dashint_{\mho(\sigma_-)}\phi\left(\frac{F(\xi,|\sigma_-|)}{\alpha}\right)dH(\xi)\ +&\ \dashint_{\mho(\sigma_+)}\phi\left(\frac{F(\zeta,|\sigma_+|)}{\alpha}\right)dH(\zeta)+\phi\left(\frac{df(\sigma)}{\alpha}\right).
\end{align*}
In order to bound from above the right hand side of the last inequality, notice that if $\sf{M}$ denotes the constant defined in (\ref{overlap}), then for any $r\in\N$, we have
$$\sum_{x\in \Sigma_r}\I_{\mho(x)}\leq \sf{M}\left(1+\sum_{x\in \Sigma_r}\I_{B(x)}\right).$$
Thus,
\begin{align*}
\sum_{r=1}^\infty\sum_{x\in \Sigma_r}\int_{\mho(x)}\phi\left(\frac{F(\xi,r)}{\alpha}\right)v^rdH(\xi)&\leq \sf{M}\left(1+\sum_{r=1}^\infty\sum_{x\in \Sigma_r}\int_{B(x)}\phi\left(\frac{F(\xi,r)}{\alpha}\right)v^rdH(\xi)\right)\\
&\leq \sf{M}\left(1+\sum_{r=1}^\infty\int_{\partial X}\phi\left(\frac{F(\xi,r)}{\alpha}\right)v^rdH(\xi)\right).
\end{align*}
By Lemma \ref{existence3} and inequality (\ref{strichartz}), the know that $\|F\|_\phi\lesssim \|df\|_\phi$. Then $\langle f_\infty\rangle_\phi\lesssim \|df\|_\phi$. This finishes the proof.
\end{proof}

\section{Localization}\label{localization}

\noindent In this section we prove Theorem \ref{thelocaltracemap}. We will always assume that $X$ is a uniformly contractible, quasi-starlike, Gromov hyperbolic simplicial complex, with bounded geometry, and that $\phi$ is a doubling Young function.

\subsection{Horospherical coordinates and hororadial shift}\label{horoshift}

We first recall the definition and properties of the parabolic visual metrics defined on the punctured boundary $\partial_\xi X:=\partial X\setminus \{\xi\}$. For a geodesic ray $r:(-\infty,0]\to X$, denote the horofunction (or Busemann function) at $r$ by $b_r:X\to \R$, and recall thar the parabolic Gromov product from $r$ is given by
$$(x,y)_r:=\frac{1}{2}\left(b_r(x)+b_r(y)-d(x,y)\right).$$
Denote by $\xi=r(-\infty)\in \partial X$. This product can be extended to $\partial_\xi X$ in the same way as the usual Gromov product, and the function $\varrho_r(\zeta,\eta)=e^{-(\zeta,\eta)_r}$ is a quasi-metric on $\partial_\xi X$. Without loss of generality, we assume that $\varrho_r$ is a distance. We call it a parabolic visual metric. It is comparable on compact subsets of $\partial_\xi X$ to the usual global visual metric.

\medskip
\noindent This metric is well adapted to the study $\mathrm{Isom}(X,\xi)$, the group of isometries of $X$ which fix the point $\xi$. More precisely, suppose that $F\in \mathrm{Isom}(X,\xi)$, and define the quantity $\sf{\Lambda}_r(F)=e^{-b_r(F(w))}$. Then,
\begin{equation}\label{quasisimilarity}
\varrho_r\left(F(\zeta),F(\eta)\right)\asymp \sf{\Lambda}_r(F)\varrho_r(\zeta,\eta),\ \forall\ \zeta,\eta\in \partial_\xi X.
\end{equation}
That is, $F$ is a quasi-similarity in the metric $\varrho_r$.

\medskip
\noindent We define the shadow from $\xi$ of a ball in $X$ as
$$\mho_\xi(x,R):=\left\{\zeta\in\partial_\xi X:\exists\ \theta=(\xi,\zeta),\ \theta\cap B(x,R)\neq\emptyset\right\},$$
where $(\xi,\zeta)$ denotes a geodesic from $\xi$ to $\zeta$. If the radius $R$ is fixed large enough, there is a uniform constant $\sf{C}\geq 1$ such that, for any $x\in X$, there is a point $\zeta\in \partial_\xi X$ with
\begin{equation}\label{shadow2}
B\left(\zeta,\sf{C}^{-1}e^{-b_r(x)}\right)\subset \mho_\xi(x,R)\subset B\left(\zeta,\sf{C}e^{-b_r(x)}\right).
\end{equation}
From now on, we fix such an $R$ and we write $\mho_\xi(x)$ instead of $\mho_\xi(x,R)$.

\medskip
\noindent We remark that the $(\partial X,\varrho)$ is $Q$-regular if, and only if, $\left(\partial_\xi X,\varrho_r\right)$ is $Q$-regular for some (and therefore for any) $\xi\in \partial X$, see \cite[Section 6]{KevinWildrick08}. We assume $Q$-regularity and denote by $H_r$ the $Q$-dimensional Hausdorff measure of $\varrho_r$. It follows from (\ref{quasisimilarity}), that if $F\in\mathrm{Isom}(X,\xi)$, then $F^*H_r$ is absolutely continuous with respect to $H_r$, and
$$\frac{dF^*H_r}{dH_r}(\zeta)\asymp \sf{\Lambda_r(F)}^Q.$$
This implies that the measure
\begin{equation*}\label{lambdameasure}
d\lambda_r(\zeta,\eta)=\frac{dH_r\otimes dH_r(\zeta,\eta)}{\varrho_r(\zeta,\eta)^{2Q}},
\end{equation*}
is almost preserved by $\mathrm{Isom}(X,\xi)$. In particular, the Orlicz-Besov norm of a measurable function $u:\partial_\xi X\to \R$,
\begin{equation}\label{preservationnorm}
\langle u\rangle_{\phi,\xi}:=\inf\left\{\alpha>0:\int_{\partial^2_\xi X}\phi\left(\frac{u(\zeta)-u(\eta)}{\alpha}\right)d\lambda_r(\zeta,\eta)\leq 1\right\},
\end{equation}
is almost preserved by such isometries. This fact will be important later in Section \ref{hnc}.

\medskip
\noindent By a horosphere centred at $\xi$ we mean a level set $H_c=\{b_r(x)=c\}$ of the horofunction $b_r$. Notice that $H_0$ is the horosphere passing through $r(0)$. Since 
$$b_r(x)\underset{x\to\xi}{\to} -\infty, \text{ and } b_r(x)\underset{x\to\zeta}{\to}+\infty,\ \forall \zeta\in\partial_\xi X,$$
$H_c$ intersects any geodesic from $\xi$ to $\zeta\neq \xi$.

\medskip
\noindent Denote by $\mathcal{G}_\xi$ the set of geodesics $\theta:\R\to X$ with $\theta(-\infty)=\xi$, and $\pi_\xi:\mathcal{G}_\xi\to \partial_\xi X$ the projection $\pi_\xi(\theta)=\theta(+\infty)$. As in Section \ref{boundary}, we denote by $\theta:\partial_\xi X\to\mathcal{G}_\xi$ a measurable section of $\pi_\xi$.

\medskip
\noindent Let $\theta$ be a geodesic in $\mathcal{G}_\xi$. Since $b_r$ is $1$-Lipschitz, there is at least one vertex of $X$ in the intersection $\theta\cap \{c_2\leq b_r\leq c_1\}$ if $c_1-c_2\geq 1$. We parametrize each geodesic $\theta_\zeta$ in such a way that $\theta_\zeta(0)$ is a vertex of $X$ in the set $\{-1/2\leq b_r\leq 1/2\}$. Notice that there is a uniform constant $\sf{C}\geq 1$ such that for any $\zeta\in\partial_\xi X$,
\begin{equation}\label{busgeod}
\left|b_r\left(\theta_\zeta(s)\right)-s\right|\leq \sf{C}+\frac{1}{2}.
\end{equation}
For $\zeta\in\partial_\xi X$ and $h\in \Z$, we denote by $\theta_\zeta[h]:=\sigma$ the edge of $\theta_\zeta$ with extremities 
$$\sigma_-=\theta_\zeta(h-1) \text{ and }\sigma_+=\theta_\zeta(h).$$
As in Section \ref{boundary}, we will also write $v=e^Q$, and we define $\mu_v$ to be the discrete measure in $\Z$ assigning the mass $v^h$ at $h\in\Z$. In the product space $\partial_\xi X\times \Z$, we will always consider the product measure $H_r\otimes \mu_v$. By the horospherical coordinates on $X$ we mean the map $\partial_\xi X\times\Z\to X$ given by $(\zeta,h)\mapsto \theta_\xi[h]$.

\medskip
\noindent The hororadial shift $S:\partial_\xi X\times\Z\to \partial_\xi X\times\Z$, $(\zeta,h)\mapsto (\zeta,h+1)$, induces a contraction $S^*$ on $L^\phi(\partial_\xi X\times\Z)$. The same proof as in Lemma \ref{contraction} holds here. Therefore, the operator $T$ defined as in (\ref{T}), is bounded.

\subsection{Local $\Lp^\phi$-cohomology} 

We first prove that $\ell^\phi_{\mathrm{loc}} H^1(X_k,\xi)$ is a Fr\'echet space, we refer to Definition \ref{deflocal} for the definition. To this end, we start by showing that there are enough quasi-isometric embeddings on $X$ with uniform constants.

\begin{lemma}\label{enoughembeddings}
There exist a constant $\sf{C}\geq 1$, which depends only on the geometric data of $X$, and a sequence $\{Y_n\}_{n\in\N}$ of hyperbolic geometric complexes, such that 
\begin{enumerate}
\item each $Y_n$ is a subcomplex of $X$ with $\partial Y_n\subset \partial_\xi X$, 
\item the inclusion map of $Y_n$ into $X$ is a $(1,\sf{C})$-quasi-isometric embedding.
\end{enumerate}
Moreover, the image of any quasi-isometric embedding $\iota\in\mathcal{Y}(X,\xi)$ is contained in $Y_n$ for $n$ large enough.
\end{lemma}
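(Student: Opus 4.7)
The plan is to exhaust the punctured boundary $\partial_\xi X$ by the compact sets $K_n:=\{\eta\in\partial X:\varrho_w(\eta,\xi)\geq 1/n\}$ and to build $Y_n\subset X$ as a subcomplex whose ideal boundary is precisely $K_n$. Concretely, fixing a basepoint $w$ and defining $\tilde Y_n$ to be the union of geodesic rays from $w$ to each point of $K_n$ together with bi-infinite geodesics between all pairs of points of $K_n$, I would take $Y_n$ to be the simplicial subcomplex of $X$ generated by the simplices having at least one vertex within a uniform distance $\sf{D}$ of $\tilde Y_n$. By the bounded-geometry assumption on $X$, $\sf{D}$ can be chosen independent of $n$.

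The identity $\partial Y_n=K_n$ is routine. The inclusion $K_n\subset\partial Y_n$ is immediate from the construction. For the reverse, if $x_k\in Y_n$ converges to $\eta\in\partial X$, then each $x_k$ lies within $\sf{D}$ of a geodesic whose ideal endpoints are in $K_n\cup\{w\}$; properness of $X$ together with compactness of $K_n$ yields a subsequential limit geodesic with endpoints in $K_n$, and $\eta$ must be one of those endpoints, hence $\eta\in K_n\subset\partial_\xi X$.

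The main step is the uniform $(1,\sf{C})$-quasi-isometric embedding of the inclusion. The key fact is that the ideal geodesic polygon inscribed in any finite subset of $K_n\cup\{w\}$ is $O(\delta)$-thin, where $\delta$ is the hyperbolicity constant, so $\tilde Y_n$ is uniformly quasi-convex with constant independent of $n$. For vertices $x,y\in Y_n$, I plan to construct a simplicial path in $Y_n$ by shadowing the $X$-geodesic $[x,y]_X$ along the sides of the ideal polygon spanned by $w$ and the boundary endpoints of the $\tilde Y_n$-geodesics carrying $x$ and $y$, switching between sides at cost $O(\delta)$ via triangle thinness. Careful bookkeeping of the switches, together with bounded geometry, yields a path of length at most $d_X(x,y)+\sf{C}$, with $\sf{C}$ depending only on $\delta$ and the degree constant $\sf{N}$, not on $n$ or on $d_X(x,y)$. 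Combined with the trivial reverse inequality $d_X\leq d_{Y_n}$, this gives the claimed $(1,\sf{C})$-embedding.

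The final assertion is the easiest. For $\iota\in\mathcal{Y}(X,\xi)$, the extended boundary map sends the compact space $\partial Y$ into $\partial_\xi X$, so $\iota(\partial Y)\subset K_n$ for all sufficiently large $n$. Since $\iota(Y)$ is a quasi-convex subset of $X$ lying within bounded Hausdorff distance (depending on $\iota$) of the quasi-convex hull of $\iota(\partial Y)\cup\{\iota(y_0)\}$ for a basepoint $y_0\in Y$, and this hull is contained in $\tilde Y_n$, we obtain $\iota(Y)\subset Y_n$ once $n$ is large enough that the thickening $\sf{D}$ absorbs the $\iota$-dependent Hausdorff error. The chief obstacle is securing the \emph{additive} (rather than merely multiplicative) constant $\sf{C}$ in the quasi-isometric embedding: a naive nearest-point projection of $[x,y]_X$ to $Y_n$ yields only a multiplicative bound, and one must genuinely exploit the near-convexity of ideal hyperbolic polygons---together with $\sf{N}$-uniform simplicial approximation---to get $\sf{C}$ independent of both $n$ and $d_X(x,y)$.
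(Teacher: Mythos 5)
Your construction is close in spirit to the paper's, but takes a genuinely different quasi-convex hull. The paper fixes a geodesic ray $r$ from $\xi$ through $w$, exhausts $\partial_\xi X$ by closed balls $K_n$ in the \emph{parabolic} metric $\varrho_r$, and builds the truncated cone
$$\mathrm{con}(K_n,l_n)=\{x\in X: b_r(x)\ge l_n,\ x\in(\xi,\zeta)\text{ for some }\zeta\in K_n\},$$
with the horosphere cut-off level $l_n$ chosen negative enough (depending on $K_n$) to guarantee uniform quasiconvexity; $Y_n$ is then a fixed-radius thickening of this cone. You instead use balls in the global visual metric and the set $\tilde Y_n$ consisting of rays from $w$ together with bi-infinite geodesics between pairs of $K_n$. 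Both give a uniformly quasiconvex set with ideal boundary $K_n$, so your variant is viable for items (1) and (2); the paper's truncated cone has the advantage that the ``depth into the horoball at $\xi$'' is controlled by an explicit second parameter $l_n\to-\infty$, which is exactly the knob one needs for the final assertion.

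There is, however, a real gap in your last paragraph. You assert that the quasi-convex hull of $\iota(\partial Y)\cup\{\iota(y_0)\}$ is contained in $\tilde Y_n$, but this is false whenever $\iota(y_0)\ne w$: the hull contains geodesic rays $[\iota(y_0),\zeta)$ for $\zeta\in\iota(\partial Y)$, and these are not among the rays from $w$ nor among the bi-infinite geodesics that make up $\tilde Y_n$; they are only at Hausdorff distance roughly $d(w,\iota(y_0))+\delta$ from $\tilde Y_n$. Your proposed remedy---``once $n$ is large enough that the thickening $\sf{D}$ absorbs the $\iota$-dependent error''---does not work as stated, because $\sf{D}$ is fixed independent of $n$ and therefore cannot absorb an error that depends on $\iota$. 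What actually must be shown is that the sets $\tilde Y_n$ themselves grow fast enough that, for any fixed $\sf{R}$, the $\sf{R}$-neighborhood of the hull eventually lies inside the fixed $\sf{D}$-neighborhood of $\tilde Y_n$; concretely, every $p\in X$ with $b_r(p)$ bounded below lies within a \emph{uniform} constant of some bi-infinite geodesic with both endpoints in $K_n$ for $n$ large, uniformly over the points $p$ in the $\sf{R}$-neighborhood of the hull of a fixed compact $\iota(\partial Y)$. This is true, but requires an argument (essentially, quasi-starlikeness plus the observation that the geodesic from $\xi$ through $p$ can be replaced by a nearby bi-infinite geodesic between two points of $\partial_\xi X$ once $b_r(p)$ is bounded below). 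The paper sidesteps this by building the depth parameter $l_n$ directly into the definition of the cone.
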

\begin{proof}
Denote by $\sf{\delta}_{\mathrm{hyp}}$ and $\sf{C}_{\mathrm{st}}$ the hyperbolicity and quasi-starlike constants of $X$. Fix a geodesic ray $r:(-\infty,0]\to X$ with $w=r(0)$ and $\xi=r(-\infty)$. For a compact set $K\subset \partial_\xi X$, we define the cone with base $K$ and height $l\in\Z\cup\{-\infty\}$ as
\begin{equation}\label{cone}
\mathrm{con}(K,l):=\left\{x\in X:b_r(x)\geq l,\text{ and }\exists\text{ a geodesic }(\xi,\zeta)\ni x,\text{ with }\zeta\in K\right\}.
\end{equation}
By hyperbolicity of $X$, the infinite cone $\mathrm{con}(K,-\infty)$ is $\sf{C}_0$-quasiconvex, where $\sf{C}_0$ is a constant depending only on $\sf{\delta}_{\mathrm{hyp}}$. Since all the geodesics defining the cone are asymptotic to $\xi$, if $l:=l_K$ is negative enough, the cone $\mathrm{con}(K,l)$ is $\sf{C}_1$-quasiconvex for a uniform constant $\sf{C}_1$. Consider the subcomplex $Y_{K}$ of $X$ formed by all the simplices of $X$ which intersect the $\sf{C}_2$-neighborhood of $\mathrm{con}(K,l)$, where $\sf{C}_2=\max\{\sf{C}_1,\sf{C}_{\mathrm{st}}\}$.

\medskip
\noindent We equip $Y_{K}$ with the length metric obtained by restricting the length structure of $X$. From the quasiconvexity of $\mathrm{con}(K,l)$, one checks that the inclusion $\iota :Y_{K}\to X$ is a $(1,\sf{C})$-quasi-isometric embedding, where $\sf{C}$ is a constant depending only on $\sf{C}_2$. In particular, $Y_{K}$ is Gromov hyperbolic, with hyperbolicity constant equal to $3\sf{C}+\delta_{\mathrm{hyp}}$. By construction, $\iota_\infty(\partial Y_K)=K$.

\medskip
\noindent Consider the parabolic visual metric $\varrho_r$, and take a point $\zeta\in\partial_\xi X$. For each $n\in\N$, consider the subcomplex $Y_{n}:=Y_{K_n}$ associated to the closed ball $K_n=B_{\rho_r}[\zeta,n]$ in the above construction. They form an increasing sequence of subcomplexes whose union is the whole $X$. One checks that they verify the desired properties \emph{(1)} and \emph{(2)}.
\end{proof}

\noindent It is clear that $\Lp^\phi_{\mathrm{loc}}(X_k,\xi)$ is a vector space. Moreover, by Lemma \ref{enoughembeddings}, it is a Fr\'echet space. In fact, $\left\{\|\cdot\|_{\phi,Y_{n}}:n\in\N\right\}$ is a defining family of seminorms for $\Lp^\phi_{\mathrm{loc}}(X_k,\xi)$, and a translation invariant metric can be defined as usual by
$$D_{\phi,\xi}(\tau_1,\tau_2)=\sum_{n\in\N}\frac{1}{2^{n}}\max\left\{1,\|\tau_1-\tau_2\|_{\phi,Y_{n,l}}\right\}.$$
Let $\iota:Y\to X$ be a quasi-isometric embedding in $\mathcal{Y}(X,\xi)$, and denote by $\delta_k$ and $\delta_{k,Y}$ the coboundary operators of $X$ and $Y$ respectively. Since $\iota^*\circ\delta_k=\delta_{k,Y}\circ \iota^*$, and since the norm of $\delta_{k,Y}:\Lp^\phi(Y_k)\to \Lp^\phi(Y_{k+1})$ is bounded from above by a constant depending only on the geometric data of $Y$, we see that
$$\delta_k:\Lp^\phi_{\mathrm{loc}}(X_k,\xi)\to \Lp^\phi_{\mathrm{loc}}(X_{k+1},\xi)$$
is a Lipschitz continuous operator in the Fr\'echet distance $D_{\phi,\xi}$.

\medskip
\noindent We now focus on the proof of Theorem \ref{thelocaltracemap}. First, we observe that
$$\Lp^\phi_{\mathrm{loc}}H^1(X,\xi)\simeq\left\{f:X_0\to\R:\ df\in \Lp^\phi_{\mathrm{loc}}(X_1,\xi)\right\}\big{/}\left(\Lp^\phi_{\mathrm{loc}}(X_0,\xi)\oplus\R\right).$$
 The approach follows exactly the same arguments as in Section \ref{boundary}, but in this case, we work with the horospherical coordinates on $X$.

\medskip
\noindent If $f:X_0\to\R$ is any function and $\zeta\in\partial_\xi X$, we set $\Delta_{\zeta,h}=df(\theta_\zeta[h]),\ h\in\Z$. Furthermore, let $F:\partial_\xi X\times\Z\to \R$ and $DF:\partial_\xi X\times\Z\to\R$ be the functions defined by
$$F(\zeta,h)=\sum_{k\geq h}\Delta_{\zeta,k}\text{ and } DF(\zeta,h)=\Delta_{\zeta,h}.$$
We denote by $l_n\in \Z$ the height of the cone defining $Y_n$ in Lemma \ref{enoughembeddings}. We write $\Z_n=\{k\in\Z:k\geq l_n\}$. Then, as in the proof of (\ref{strichartz}), we have
$$\|F\|_{\phi,K_n\times\Z_n}\lesssim \|DF\|_{\phi,K_n\times\Z_n},\text{ for all }n\in\N.$$
Note that for a fixed edge $\sigma\in X_1$ and $h\in \Z$, the set of points $\zeta\in \partial_\xi X$ such that $\theta_\zeta[h]=\sigma$ is contained in the shadow $\mho_\xi(\sigma_+)$. In particular, from (\ref{busgeod}) and (\ref{shadow2}), we have
$$H_r\left(\xi:\theta_\xi[h]=\sigma\right)\lesssim v^{-h}.$$
It follows, exactly as for (\ref{existence3}), that
$$\|DF\|_{\phi,K_n\times\Z_n}\lesssim \|df\|_{\phi,Y_{n}},\text{ for all }n\in\N.$$
Therefore, if $f:X_0\to\R$ is such that $df\in \Lp^\phi_{\mathrm{loc}}(X_1,\xi)$, then the hororadial limit $f_\infty$ exists almost everywhere, and $f_\infty\in L^\phi_{\mathrm{loc}}\left(\partial_\xi X,H_r\right)$. Moreover, if we denote by $f_h:\partial_\xi X\to \R$ the function $F(\cdot,h)$, then $f_h\to f_\infty$ in $L^\phi_{\mathrm{loc}}(\partial_\xi X,H_r)$ when $h\to +\infty$.

\medskip
\noindent Define the trace map from $\xi$ by 
$$\mathcal{T}_\xi:\left\{f:X_0\to\R:\ df\in \Lp^\phi_{\mathrm{loc}}(X_1,\xi)\right\}/\R\to L^\phi_{\mathrm{loc}}\left(\partial_\xi X,H_r\right)/\R,\ f\mapsto f_\infty.$$
We can determine the kernel and the image of $\mathcal{T}_\xi$ as we did for $\mathcal{T}$.

\begin{proof}[Proof of Theorem \ref{thelocaltracemap}] We explain how to adapt the proofs of Propositions \ref{nucleo} and \ref{image} given in Section \ref{boundary}. First, notice that for each subcomplex $Y_n$, the intersection $Y_{n}\cap \{b_r\leq 0\}$ has finite diameter. In particular, in order to show that a function $f:X_0\to\R$ is locally $\phi$-integrable, it is enough to bound from above the norm $\|f\|_{\phi,Y_{n}\cap \{b_r\geq 0\}}$ for all $n\in\N$. Since the sequence of coverings
$$\left\{\mho_\xi(x):x\in X_0\cap\{h\leq b_r\leq h+1\}\right\},\ h\in\N,$$
of $\partial_\xi X$, enjoy the same properties (1), (2), and (3), enumerated before the proof of Lemma \ref{bigF} for the coverings $\{\mho(x):x\in \Sigma_r\}_{r\in\N}$, the same proof as in Lemma \ref{bigF} shows that the kernel of $\mathcal{T}_\xi$ is precisely $(\Lp^\phi_{\mathrm{loc}}(X_0)\oplus\R)/\R$. 

\medskip
\noindent An analogous formula to (\ref{invdistance}) holds for the hororadial shadows. More precisely, the density of the measure $\lambda_r$ verifies
\begin{equation}\label{invdistance2}
\frac{1}{\varrho_r(\zeta,\eta)^{2Q}}\asymp \sum_{\sigma\in X_1}\frac{\I_{\mho_r(\sigma_-)}(\zeta)\I_{\mho_r(\sigma_+)}(\eta)}{H_r\left(\mho_r(\sigma_-)\right)H_r\left(\mho_r(\sigma_+)\right)},\text{ for }\zeta,\eta\in\partial_\xi X.
\end{equation}
The proof is a minor adaptation of (\ref{invdistance}). Therefore, the same arguments as in Proposition \ref{image} show that the image of $\mathcal{T}_\xi$ is the local Orlicz-Besov space $B^\phi_{\mathrm{loc}}(\partial_\xi X,\varrho_r)/\R$.
\end{proof}

\begin{remark} Consider the continuous local $\Lp^\phi$-cohomology
$$A^\phi_{\xi}=A^\phi_{\mathrm{loc}}\left(\partial_\xi X,\varrho_r\right):=\left\{u\in B^\phi_{\mathrm{loc}}\left(\partial_\xi X,\varrho\right): u\text{ is continuous}\right\}.$$
The space $A^\phi_{\mathrm{loc}}\left(\partial_\xi X,\varrho_r\right)$ is a unital commutative Fr\'echet algebra when equipped with the countable family of multiplicative seminorms
$$p_n(u):=\|u\|_{\infty,K_n}+\langle u\rangle_{\phi,K_n},\ n\in\N.$$
Its spectrum $\mathrm{Sp}\left(A^\phi_\xi\right)$ is a Hausdorff hemicompact topological space invariant by Fr\'echet algebra isomorphisms. We refer to \cite{HelmutGoldmann90} for background on Fr\'echet algebras. It is homeomorphic to the quotient $\partial_\xi X/\sim_{\phi}$, where the local $\Lp^\phi$-cohomology classes are defined by setting $\zeta\sim_\phi \zeta'\in \partial_\xi$ if, and only if, $u(\zeta)=u(\zeta')$ for any function $u\in A^\phi_\xi$. These classes must be preserved, in particular, by the boundary homeomorphism of a self quasi-isometry of $X$ which fixes $\xi$.
\end{remark}

\noindent The next lemma is relevant to Corollary \ref{confdimheintze}.

\begin{lemma}\label{localLip}
Let $\phi$ be a the doubling Young function such that
$$\sum_{j=0}^{+\infty}\phi(2^{-j})2^{jQ}<\infty.$$
Suppose that $(\partial X,\varrho)$ is $Q$-regular and let $\xi\in \partial X$. Then $A^{\phi}_{\mathrm{loc}}(\partial_\xi X,\varrho_r)$ contains all Lipschitz functions.
\end{lemma}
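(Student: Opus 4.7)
The plan is to check directly that every $L$-Lipschitz function $u:\partial_\xi X\to\R$ satisfies $\langle u\rangle_{\phi,\xi,K}<\infty$ for any compact $K\subset\partial_\xi X$; continuity is automatic. Fix such a $K$, let $D:=\diam_{\varrho_r}(K)$, and recall that $Q$-regularity gives $H_r(K)<\infty$.

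First I would decompose the inner integration dyadically. For each $\zeta\in K$ and $j\in\Z$, set $A_j(\zeta):=\{\eta\in K:2^{-j-1}<\varrho_r(\zeta,\eta)\leq 2^{-j}\}$, which is nonempty only when $j\geq j_0\asymp -\log_2 D$. On $A_j(\zeta)$, the Lipschitz bound gives $|u(\zeta)-u(\eta)|\leq L\cdot 2^{-j}$, and $Q$-regularity provides $H_r(A_j(\zeta))\lesssim 2^{-jQ}$, so monotonicity of $\phi$ yields
\begin{equation*}
\int_{A_j(\zeta)}\phi\!\Bigl(\frac{|u(\zeta)-u(\eta)|}{\alpha}\Bigr)\frac{dH_r(\eta)}{\varrho_r(\zeta,\eta)^{2Q}}\lesssim \phi\!\Bigl(\frac{L\cdot 2^{-j}}{\alpha}\Bigr)\,2^{jQ}.
\end{equation*}
Summing over $j\geq j_0$ and integrating in $\zeta$ produces the estimate
\begin{equation*}
\int_{K\times K}\phi\!\Bigl(\frac{u(\zeta)-u(\eta)}{\alpha}\Bigr)\,d\lambda_r(\zeta,\eta)\lesssim H_r(K)\sum_{j\geq j_0}\phi\!\Bigl(\frac{L\cdot 2^{-j}}{\alpha}\Bigr)2^{jQ}.
\end{equation*}

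Second I would exploit the summability hypothesis. For $\alpha\geq L$, convexity of $\phi$ with $\phi(0)=0$ gives the subadditive bound $\phi(xt)\leq x\,\phi(t)$ for $x=L/\alpha\in[0,1]$, so the portion of the sum with $j\geq 0$ is dominated by $(L/\alpha)\sum_{j\geq 0}\phi(2^{-j})2^{jQ}$, which is finite by hypothesis and tends to $0$ as $\alpha\to\infty$. The finitely many terms with $j_0\leq j<0$ (present only if $D>1$) each tend to $0$ as $\alpha\to\infty$ by continuity of $\phi$ at $0$. Choosing $\alpha$ large enough therefore forces the right-hand side below $1$, giving $\langle u\rangle_{\phi,\xi,K}\leq\alpha<\infty$ and hence $u\in A^\phi_{\mathrm{loc}}(\partial_\xi X,\varrho_r)$.

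No deep obstacle arises here; the only care needed is in splitting the dyadic sum at $j=0$, past which the convexity-based bound $\phi(L\cdot 2^{-j}/\alpha)\leq (L/\alpha)\phi(2^{-j})$ ceases to yield useful decay. The argument also indicates that the summability assumption is essentially sharp, since a ``plateau-cap'' Lipschitz profile concentrated near a point reproduces precisely the series $\sum\phi(2^{-j})2^{jQ}$.
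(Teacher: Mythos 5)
Your proof is correct and follows essentially the same dyadic-shell decomposition that the paper uses, with the same Lipschitz and $Q$-regularity bounds reducing to the convergent series $\sum_j\phi(2^{-j})2^{jQ}$. The only small technical difference is that the paper rescales the shells by $\diam K$ and simply bounds the modular $\int\phi(u(\eta)-u(\zeta))\,d\lambda_r$ (relying on $\phi$ being doubling to conclude finiteness of the Luxembourg norm), whereas you work at absolute scale and verify the Luxembourg norm directly by producing an $\alpha$ that makes the modular $\leq 1$ via the convexity bound $\phi(xt)\leq x\phi(t)$.
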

\noindent This is the case for $\phi_{p,\kappa}$, the doubling Young defined in (\ref{ourphi}), whenever $(p,\kappa)>(Q,1)$.
\begin{proof}
Let $u:\partial_\xi X\to\R$ be a $\mathsf{L}$-Lipschitz function and let $K\subset \partial_\xi X$ be a compact set. Denote by $\delta=\diam\,K$, and for $\zeta\in K$ and $j\in\N$, consider the sets $A_j(\zeta)$ defined as $\overline{B}(\zeta,2^{-j}\delta)\setminus B(\zeta,2^{-(j+1)}\delta)$. Then
\begin{align*}
\sum_{j=0}^{+\infty}\int_{A_j(\zeta)}\frac{\phi\left(u(\eta)-u(\zeta)\right)}{\varrho_r(\eta,\zeta)^{2Q}}dH_r(\eta)&\leq \sum_{j=0}^{+\infty}\int_{A_j(\zeta)}\frac{\phi\left(\mathsf{L}\varrho_r(\eta,\zeta)\right)}{\varrho_r(\eta,\zeta)^{2Q}}dH_r(\eta)\\
&\lesssim \sum_{j=0}^{+\infty}\phi(2^{-j})2^{jQ}<\infty.
\end{align*}
Integrating over $K$ we obtain the result.
\end{proof}

\section{Quasi-isometries of Heintze groups}\label{hnc}

\noindent In this section we give the proofs of Theorems \ref{fixpoint}, \ref{localclassesinfty}, \ref{localclassesN}, and Corollaries \ref{biLip} and \ref{abelian}.

\subsection{Preliminaries and notation} Let $X_\alpha=N\rtimes_\alpha \R$ be a purely real Heintze group and $\mathfrak{x}_\alpha=T_{(e,0)}X_\alpha$ be its Lie algebra. The left translations are then given by 
$$d_{(e,0)}L_{(x,t)}=\left(d_eL_x\circ\mathrm{Exp}(-t\alpha),1\right), \ (x,t)\in X_\alpha.$$ 
We will identify the subgroup $N\times \{0\}$ with $N$. Let $\mathcal{B}=\{\partial_1,\ldots,\partial_n\}$ be a basis of $T_eN$ on which $\alpha$ assumes its Jordan canonical form, and denote by $\partial_t$ the tangent vector at $s=0$ to the curve $s\mapsto (e,s)$.

\medskip
\noindent Let us fix and describe a negatively curved metric on $X_\alpha$. Given $\lambda>0$, denote by $\alpha_\lambda$ the derivation $\lambda\alpha$. Notice that $X_{\alpha_\lambda}$ is always isomorphic to $X_\alpha$. By \cite[Theorem 2]{Heintze74}, if $g_N$ is a left invariant metric on $N$ for which the symmetric part of $\alpha$ is positive definite and $\lambda$ is large enough, then
\begin{equation}\label{fixmetric}g_{(x,t)}=\tau_\lambda(t)^*g_{N,x}\oplus dt^2\end{equation}
defines a metric on $X_{\alpha_\lambda}$ of negative sectional curvature bounded from above by $-1$. By \cite[Theorem 3]{Heintze74}, if $b_1,\ldots,b_{n}$ are well chosen positive numbers, then the symmetric part of $\alpha$ for the left invariant metric on $N$ which makes $\{b_1\partial_1,\ldots,b_n\partial_n\}$ an orthonormal basis, is positive definite. Here the $b_i$ depend only on the dimension $n$ and the eigenvalues of $\alpha$. Therefore, $X_{\alpha_\lambda}$ is a $\mathrm{CAT}(-1)$ space when equipped with (\ref{fixmetric}).

\medskip
\noindent The vertical lines $\gamma_x:t\mapsto (x,t)$ are unit-speed geodesics in $X_{\alpha_\lambda}$, and define the boundary point denoted $\infty$. The boundary at infinity of $X_{\alpha_\lambda}$ is a topological $n$-sphere, which we identify with the one-point compactification $N\cup\{\infty\}$. The orbits of $N$, the sets $N\times\{t\}$, correspond to the horospheres centered at $\infty$. We denote the Riemannian norm and distance induced in $N\times \{t\}$ by $\|\cdot\|_t$ and $d_t$ respectively. The metric on $X_{\alpha_\lambda}$ is then given by
\begin{equation}\label{metric}
\|(v,s)\|^2_{(x,t)}=\|\mathrm{Exp}(t\alpha_\lambda)\circ d_xL_{x^{-1}}(v)\|^2_0+s^2,\ (v,s)\in T_{(x,t)}X_{\alpha_\lambda}.
\end{equation}
The volume element is $dV(x,t)=e^{\lambda\mu t}dV_N(x)dt,$ where $dV_N$ is the volume element of the corresponding left-invariant metric on $N$, and $\mu:=\mathrm{tr}(\alpha)$ is the trace of $\alpha$.

\medskip
\noindent In $N=\partial_\infty X_{\alpha_\lambda}$, the parabolic visual metric is comparable to the function defined as
\begin{equation}\label{quasimetric}
\varrho_\infty(x,y)\asymp e^{-t},\text{ where }t=\sup\left\{s\in\R:d_s\left(\gamma_x(s),\gamma_y(s)\right)\leq 1\right\},\ x,y\in N.
\end{equation} 
The assertion follows form the fact that both functions are invariant by left translations in $N$, and admit the one-parameter group of contractions $\{\tau_\lambda(t):t\in\R\}$, which contract by the factor $e^{-t}$. We refer to \cite{Hersonsky-Paulin97,Hamenstadt89} for more details. This also implies that the parabolic boundary $(N,\varrho_\infty,dV_N)$ is Ahlfors regular of dimension $\lambda\mu$.

\medskip
\noindent 
Recall that $\mu_1\leq\cdots\leq\mu_d$ are the distinct eigenvalues of $\alpha$, and that $\alpha$ in the generalized eigenspace $V_i$ is a sum of $k_i$ Jordan blocks of sizes $m_{ij}$, for $j=1,\ldots,k_i$. Write the vectors of $\mathcal{B}$ as $\{\partial_k^{ij},\ (i,j,k)\in\mathcal{I}\},$ where $\mathcal{I}$ is the set of triples
$$\mathcal{I}:=\left\{(i,j,k):i\in\{1,\ldots,d\},j\in\{1,\ldots,k_i\},k\in\{1,\ldots,m_{ij}\}\right\}$$
equipped with the lexicographic order.

\subsection{The singular value decomposition of $\mathrm{Exp}(tJ)$}

\noindent In this section we prove a lemma about the asymptotic behaviour when $t\to+\infty$ of the singular value decomposition of the matrix $M=\mathrm{Exp}(tJ)$, where $J$ is the standard nilpotent matrix of size $m$:
$$J=J(0,m)=\left(
\begin{array}{c c c c c}
0 & 1& 0 & \cdots & 0\\
0 & 0 & 1& \cdots & 0\\
\vdots&\vdots&\vdots& \ddots& \vdots\\
0 & 0& 0 & \cdots &1\\
0&0&0&\cdots & 0
\end{array}\right).$$
Since any two inner products on $\R^m$ are equivalent, for our purposes it suffices to consider the standard one $\langle\cdot,\cdot\rangle$, which makes the standard basis $\{e_i\}_{i=1}^n$ orthonormal. The next lemma provide us with the key estimates needed to compute the $L^{p,\kappa}$-cohomology of $X_\alpha$.

\begin{lemma}\label{bon}
Let $M=\mathrm{Exp}(tJ)$ be as above. Let $\{v_1,\ldots,v_m\}$ be an orthonormal basis of eigenvectors for $M^*M$. Denote $\lambda_i$ the eigenvalue associated with $v_i$, and suppose they are ordered $\lambda_1<\lambda_2<\ldots<\lambda_m$. Then the following properties are satisfied:
\medskip
\begin{enumerate}
\item For each $i=1,\ldots, m$, $\lambda_i\lambda_{m-i+1}=1$. In particular, $\lambda_{\frac{m+1}{2}}=1$ if $m$ is odd.
\medskip
\item For each $i=1,\ldots,m$,
\begin{equation}\label{eigenvalues}
\sqrt{\lambda_i}\sim \frac{(m-i)!}{(i-1)!}\,t^{2i-m-1}, \text{ when }t\to +\infty.
\end{equation}
\item For each $i=1,\ldots,m$, the $\lambda_i$-eigenspace $\R v_i$ of $M^*M$ tends to $\R e_i$ when $t\to +\infty$.
\end{enumerate}
\end{lemma}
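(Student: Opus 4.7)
My plan is to treat (1) via a pair of involutive symmetries of the nilpotent block $J$, reduce (2) to a Cauchy--Binet computation whose crux is an explicit Hankel determinant, and then obtain (3) from the spectral separation provided by (2).

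For (1), let $K=(\delta_{i+j,m+1})$ be the antidiagonal reversal matrix and $D=\mathrm{diag}((-1)^{i-1})$. A direct computation gives $KJK=J^{T}$ and $DJD=-J$, so $KMK=M^{*}$ and $DMD=M^{-1}$. The product $Q:=KD$ is a signed permutation, hence orthogonal, and one checks $QMQ^{-1}=M^{-*}$. Since singular values are invariant under orthogonal conjugation, $M$ and $M^{-*}$ have the same singular values; in particular the spectrum of $M^{*}M$ is invariant under $\lambda\mapsto 1/\lambda$. Combined with the ordering $\lambda_{1}<\cdots<\lambda_{m}$ this forces $\lambda_{i}\lambda_{m-i+1}=1$.

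For (2) I apply the Cauchy--Binet formula
\[
  e_{k}(\lambda_{1},\ldots,\lambda_{m})\;=\;\sum_{|I|=|J|=k}\bigl(\det M_{IJ}\bigr)^{2}.
\]
The explicit form $M_{ij}=t^{j-i}/(j-i)!$ (for $j\ge i$, zero otherwise) factors $t$ out of every minor:
\[
  \det M_{IJ}\;=\;t^{\,S(J)-T(I)}\det A_{IJ},\qquad A_{ij}=1/(j-i)!,
\]
where $S(J),T(I)$ are the sums of elements of $J,I$. The exponent is uniquely maximized, at the value $k(m-k)$, by $I=\{1,\ldots,k\}$, $J=\{m-k+1,\ldots,m\}$, so the leading term of $e_{k}$ is $\Delta_{k}^{2}\,t^{2k(m-k)}$ with $\Delta_{k}:=\det\bigl(1/(m-k+q-p)!\bigr)_{p,q=1}^{k}$. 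The crucial step is the evaluation of this Hankel determinant: writing $1/(\beta-\alpha)!=\binom{\beta}{\alpha}\alpha!/\beta!$ with $\beta=m-k+q-1$ and $\alpha=p-1$ pulls row and column factors out of the matrix, reducing the computation to $\det(\binom{m-k+q-1}{p-1})_{p,q=1}^{k}=1$, a Vandermonde-type identity for binomial coefficients evaluated on an arithmetic progression. This yields
\[
  \Delta_{k}\;=\;\prod_{j=0}^{k-1}j!\bigg/\prod_{j=m-k}^{m-1}j!,\qquad \frac{\Delta_{k}}{\Delta_{k-1}}=\frac{(k-1)!}{(m-k)!}.
\]
From $P_{k}\le e_{k}\le \binom{m}{k}P_{k}$ with $P_{k}=\lambda_{m-k+1}\cdots\lambda_{m}$, a first pass gives $\lambda_{i}=\Theta(t^{2(2i-m-1)})$, so the consecutive ratios $\lambda_{i+1}/\lambda_{i}\asymp t^{4}$ tend to infinity. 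This separation upgrades the bound to $e_{k}\sim P_{k}$ at leading order, and $\lambda_{i}=P_{m-i+1}/P_{m-i}$ produces the precise asymptotic $\sqrt{\lambda_{i}}\sim ((m-i)!/(i-1)!)\,t^{2i-m-1}$.

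For (3) the simple, well-separated spectrum pins each eigenvector via a perturbative argument. For the top eigenvalue, $\|Me_{m}\|^{2}=\sum_{j=0}^{m-1}t^{2j}/(j!)^{2}$ matches $\lambda_{m}$ to leading order while $\lambda_{m-1}=o(\lambda_{m})$, so a Davis--Kahan type estimate forces the top eigenvector $v_{m}$ to converge to $\pm e_{m}$. Iterating on the orthogonal complement $v_{m}^{\perp}$ — which by what was just established asymptotically coincides with $\mathrm{span}(e_{1},\ldots,e_{m-1})$ — yields $v_{i}\to\pm e_{i}$ for every $i$. A cleaner alternative is to apply reverse Gram--Schmidt to $(e_{m},\ldots,e_{1})$ with respect to the form $B(u,v)=\langle Mu,Mv\rangle$: the resulting $B$-orthogonal basis $\{w_{i}\}\subset\mathrm{span}(e_{i},\ldots,e_{m})$ satisfies $\|Mw_{i}\|^{2}=G_{i}/G_{i+1}$, a ratio of principal minors of $M^{*}M$, which the Cauchy--Binet analysis of (2) identifies with $\lambda_{i}$ at leading order; the spectral separation then forces $w_{i}$, and hence each $v_{i}$, to converge to $\pm e_{i}$.

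The main obstacle is the evaluation of the Hankel determinant $\Delta_{k}$ in (2); the binomial reformulation combined with the Vandermonde identity on an arithmetic progression is what unlocks a clean product formula. Once $\Delta_{k}$ is in hand, the extraction of individual eigenvalues from $e_{k}$ and the eigenvector convergence in (3) follow from the spectral separation by standard arguments.
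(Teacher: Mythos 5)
Your argument for part~(1) is essentially the paper's: both rest on the same two reflections (the antidiagonal reversal and the alternating-sign involution) and on the fact that orthogonal conjugation preserves singular values.

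Part~(2) is correct and takes a genuinely different route. The paper passes to the symmetric square root $SM$ of $M^*M$, computes the traces $T_k=\mathrm{tr}\bigl((SM)^k\bigr)$, and feeds them through Newton's identities and Vieta to evaluate the characteristic coefficients $c_k$ asymptotically. You instead apply Cauchy--Binet directly to $M$ to write $e_k(\lambda_1,\ldots,\lambda_m)$ as a sum of squared minors, isolate the unique $t$-dominant minor, and evaluate the resulting Hankel determinant $\Delta_k=\det\bigl(1/(m-k+q-p)!\bigr)_{p,q=1}^k$ in closed form. Both routes produce the same constant --- your $\Delta_k$ is precisely the coefficient appearing in the paper's asymptotic for $|c_k|$ --- but yours avoids the inductive trace recursion and the detour through $\beta_i=\pm\sqrt{\lambda_i}$, and the Hankel evaluation is a clean one-shot identity: pulling $(p-1)!$ from row $p$ and $1/(m-k+q-1)!$ from column $q$ leaves $\det\bigl(\binom{m-k+q-1}{p-1}\bigr)_{p,q=1}^{k}$, which equals $1$ because the column parameters form a unit arithmetic progression.

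Part~(3), as written, has a gap. You establish $B(w_k,w_k)\sim\lambda_k$ and then assert that ``the spectral separation then forces $w_i$, and hence each $v_i$, to converge to $\pm e_i$''; but that implication is exactly the content of (3) and is not automatic from the Rayleigh-quotient computation alone. Writing $w_k=e_k+\sum_{l>k}c_le_l$ and letting $G$ be the Gram matrix of $(e_{k+1},\ldots,e_m)$ for $B$, the obvious estimate $\sum_l c_l^2\le\bigl(B(e_k,e_k)-B(w_k,w_k)\bigr)/\mu_{\min}(G)$, with $\mu_{\min}(G)\sim\lambda_{k+1}$ by Cauchy interlacing, yields only $\sum_l c_l^2=O\bigl(t^{2(m-k-2)}\bigr)$, which does not vanish once $k\le m-2$. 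The gap is fillable with the very Cauchy--Binet tools you set up in (2): by Cramer's rule, $c_l$ is, up to sign, the quotient of the non-principal minor of $M^*M$ on rows $\{k+1,\ldots,m\}$ and columns $\{k,\ldots,m\}\setminus\{l\}$ by $\det G$, and comparing $t$-exponents gives $c_l=O(t^{k-l})\to 0$, hence $w_k\to e_k$. You then still need a transfer step from the $B$-orthogonal basis $(w_k)$ to the genuine eigenvectors $(v_k)$ --- e.g.\ the iterated Davis--Kahan argument you mention, or the observation that the $k$-th trailing principal minor of $M^*M$ is $\sim\lambda_k\cdots\lambda_m$, which together with interlacing forces the span of the top $m-k+1$ eigenvectors to converge to $\mathrm{span}(e_k,\ldots,e_m)$. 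The paper's proof of (3) is entirely different and fully explicit: it recursively constructs vectors in $\mathrm{Im}(SM-\beta_jI)$ that converge in direction to $\pm e_i$ for each $i<j$ and invokes orthogonality of $v_j$ to that image. Your softer route is viable and fits naturally with your treatment of (2), but the two missing steps need to be written out before it closes.
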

\begin{proof}
To prove (1) note that $M^{-1}$ is similar to $M$. More precisely, denote by $R$ the reflexion given by $R(e_i)=(-1)^ie_i$, then $RMR=M^{-1}$. Therefore $M^{-1}$ and $M$ have the same eigenvalues. This proves (1).

\noindent We now prove (2). Let $S$ be the reflexion given by $S(e_i)=e_{m-i+1}$. Then $SMS=M^*$, and therefore, $M^*M=(SM)^2$. This implies that $M^*M$ and $SM$ have the same eigenvectors, and that $\beta_i=\pm\sqrt{\lambda_i}$, $i=1,\ldots,m$, are the eigenvalues of $SM$. Also note that $SM$ is symmetric.

\noindent Denote by $T_k$ the trace of $(SM)^k$, and let 
$$p(x)=\det(xI-SM)=x^m+c_1x^{m-1}+\cdot+c_{m-1}x+c_m,$$
be the characteristic polynomial of $SM$. Using the Newton identities
$$c_1=-T_1\text{ and } T_k+c_1T_{k-1}+\cdots+c_{k-1}T_1 +kc_k=0,\ k=2,\ldots,m,$$
one shows by induction that 
\begin{equation}\label{newton}
|c_k|\sim \frac{(k-1)!(k-2)!\cdots 1}{(m-k)!(m-k+1)!\cdots (m-1)!}\,t^{k(m-k)}\text{ when } t\to+\infty.
\end{equation}
On the other hand, by Vieta's identities we have
$$c_k=(-1)^k\sum\limits_{1\leq i_1<\cdots<i_k\leq m}\beta_{i_1}\cdots\beta_{i_k}\sim (-1)^k\beta_{m-k+1}\cdots\beta_{m-1}\beta_m,$$
which together with (\ref{newton}) gives (2).

\noindent Let us prove (3). Since $SM$ is symmetric, we have that for each $j=1,\ldots,m$, the eigenvector $v_j$ is orthogonal to the image of $SM-\beta_jI$. Denote by 
$$w_i=SM(e_i)=\sum_{k=m-i+1}^{m}\frac{t^{k-m+i-1}}{(k-m+i-1)!}e_k,\text{ for }i=1,\ldots,m.$$
Note that since $RMR=M^{-1}$, it is enough to prove the assertion for $j>(m+1)/2$. Fix such a $j$, so that $|\beta_j|\to +\infty$ when $t\to +\infty$. In this case $1<2j-m\leq m$. The highest degree of the coefficients of $w_i$ is $i-1$, given by the coefficient in $e_m$. In particular,
$$u_i=\frac{w_i-\beta_je_i}{\|w_i-\beta_je_i\|}\to \pm e_i\text{ when }t\to+\infty,$$
for $i<2j-m$. Therefore, any limit point of $v_j$ is orthogonal to $\text{span}(e_1,\ldots,e_{2j-m-1})$. In particular, the limit points of $v_m$ are $\pm e_m$.

\noindent We consider now the case when $j\leq m-1$ and $i\in \{2j-m,\ldots,j-1\}$. Let $r=m-j-1$, and define recursively
\begin{align*}
z_i^{(0)}&=w_i\\
z_i^{(s)}&=\frac{d_{is}}{t}z_{i+1}^{(s-1)}-z_i^{(s-1)},\ s=1,\ldots,r,
\end{align*}
where $d_{is}$ is chosen so that $z_i(s)$ has zero coefficient in $e_{m-(s-1)}$ (e.g. $d_{i1}=i$). Then the the coefficient of $z_i(r)$ in $e_k$ is zero for $k=m-(r-1),\ldots,m$. Moreover, the highest degree of the coefficients of $z_i(r)$ is $i-(r+1)$, given by the coefficient in $e_{m-r}$.

\medskip
\noindent The recursion defined above but starting at $w_i-\beta_ie_i$ instead of $w_i$ produces a vector $z_i$ in the image of $SM-\beta_iI$, which in the case $i-r<2j-m$ verifies
$$\frac{z_i}{\|z_i\|}\to \pm e_i,\text{ when }t\to +\infty.$$
This shows that the limit points of $v_j$ are orthogonal to $e_i$ for $i\leq 2j-m+r=j-1$. This finishes the proof of (3).
\end{proof}

\noindent Let us apply the previous lemma in our setting. Consider the two inner products on $T_eN$, $\langle\cdot,\cdot\rangle$ and $\langle\cdot,\cdot\rangle_0$, who make $\{\partial_k\}$ and $\{b_k\partial_k\}$ orthonormal basis respectively. Note that $\mathrm{Exp}(t\alpha_\lambda)$ has the diagonal block form
$$\bigoplus_{i=1}^d\bigoplus_{j=1}^{k_i}e^{t\mu_i}\mathrm{Exp}(\lambda tJ(0,m_{ij})).$$
If $v$ is any vector in $V_{ij}=\mathrm{Span}\left(\partial^{ij}_k,\ k=1,\ldots, m_{ij}\right)$, then
$$\left\|\mathrm{Exp}(t\alpha_\lambda)(v)\right\|_0\asymp e^{\lambda t\mu_i}\left\|\mathrm{Exp}(\lambda tJ(0,m_{ij})(v)\right\|.$$
Let $\{v^{ij}_1(t),\ldots,v^{ij}_{m_{ij}}(t)\}$ be an orthonormal basis (for $\langle\cdot,\cdot\rangle$) of $V_{ij}$ formed by singular eigenvectors of $\mathrm{Exp}(\lambda tJ(0,m_{ij}))$, and denote by $\lambda^{ij}_1(t)<\cdots<\lambda^{ij}_{m_{ij}}(t)$ its singular values. By item (2) of Lemma \ref{bon}, we have 
\begin{equation*}
\sqrt{\lambda^{ij}_k(t)}\sim \sf{c}^{ij}_{k}t^{2k-m_{ij}-1},\text{ when } t\to+\infty,
\end{equation*}
where $\sf{c}^{ij}_{k}=\lambda^{2k-m_{ij}-1}(m_{ij}-k)!/(k-1)!$. In particular,
\begin{equation}\label{equivlambda}
\left\|\mathrm{Exp}(t\alpha_\lambda)\left(v^{ij}_k(t)\right)\right\|_0\asymp t^{2k-m_{ij}-1}e^{\lambda t\mu_i},\text{ when }t\to+\infty.
\end{equation}
Moreover, by item (3) of Lemma \ref{bon}, we can chose $v^{ij}_k(t)$ so that
$$\left\|v^{ij}_k(t)-\partial^{ij}_{k}\right\|_0\to 0,\text{ when }t\to+\infty.$$
In other words, the singular eigenvectors converge to the standard basis vectors of $V_{ij}$. 

\subsection{The degree one $L^{p,\kappa}$-cohomology of $X_\alpha$: main computations}

\noindent Consider $u$ a real smooth function on $X_{\alpha_\lambda}$. For each $t\in \R$, denote by $u_t:N\to \R$ the function $u_t(x)=u(x,t)$, and let
$$u_\infty(x):=\lim_{t\to+\infty}u_t(x),$$
whenever this limit exists.  

\medskip
\noindent Let $E\subset N$ be a measurable set and $t_0\in\R$. As we proved in Section \ref{horoshift}, the hororadial shift
$$w\in L^{p,\kappa}\left(E\times [t_0,+\infty)\right)\mapsto S_k(w)(x,t)=\int_{k}^{k+1}w(x,t+s)ds,$$
is a contraction in the $\phi_{p,\kappa}$-norm, and its norm is bounded from above by $e^{-k\lambda\mu/\sf{K}}$, where $\sf{K}$ is the doubling constant of $\phi_{p,\kappa}$. The proof in this case follows from Jensen's inequality. Therefore, the operator
$$T(w)(x,t)=\sum_{k=0}^\infty S_k(w)(x,s)=\int_0^{+\infty}w(x,t+s)ds,$$
is bounded in the $\phi_{p,\kappa}$-norm. 

\medskip
\noindent Suppose that $du\in L^{p,\kappa}\left(E\times [t_0,+\infty)\right)$. Under mild assumptions on $E$, for instance if $E$ is $\lambda\mu$-Ahlfors regular, the functions $u_t$ converge a.e. and in $L^{p,\kappa}_{\mathrm{loc}}(E,dV_N)$ to $u_\infty$. In particular, $u_\infty\in L^1_{\mathrm{loc}}(E,dV_N)$ and $du_\infty$ is well defined in the sense of distributions. Moreover, the operator $T$ evaluated at the function $\partial_tu$ gives
$$T(\partial_tu)(x,t)=u_\infty(x)-u_t(x).$$
Thus, we can write $u_\infty=v+u$, where $v\in L^{p,\kappa}\left(E\times [t_0,+\infty)\right)$.

\medskip
\noindent For $(i,j,k)\in\mathcal{I}$, we denote by $X^{ij}_k$ the left invariant vector field of $N$ generated by the basis vector $\partial^{ij}_k$, and by $Y^{ij}_k(t)$ the left invariant vector field of $N$ generated by $v^{ij}_k(t)$. Recall from the previous section that $v^{ij}_k(t)\in V_{ij}$ is a singular eigenvector of $\mathrm{Exp}(t\alpha_\lambda)$, and $v^{ij}_k(t)\to\partial^{ij}_k$ when $t\to +\infty$. When the pair $(i,j)$ is clear from the context, we simply write $X_k$, $\partial_k$, $Y_k(t)$, $v_k(t)$, and $m$ respectively. Recall that for $i\in\{1,\ldots,d\}$, the $i$-critical exponent is given by $p_i=\mu/\mu_i$.

\begin{lemma}\label{critical}
Let $(p,\kappa)\in I$, and let $E$ denotes either $N$ or $N\setminus\{\xi\}$ for some point $\xi$. Consider $u:X_{\alpha_\lambda}\to \R$ a smooth function such that $du\in L^{p,\kappa}(K\times [t,+\infty))$ for any compact $K\subset E$ and $t\in\R$. Then, $X^{ij}_k(u_\infty)=0$ whenever
\begin{equation}\label{condicion}(p,\kappa)\leq \big(p_i,1+p_i(m_{ij}+1-2k)\big).\end{equation}
\end{lemma}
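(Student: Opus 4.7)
The plan is to identify $X^{ij}_k u_\infty$ with a distributional limit of $Y^{ij}_k(t)u_t$ as $t\to+\infty$, and to show vanishing via an Orlicz--Young estimate. Since $v^{ij}_k(t)\to\partial^{ij}_k$ (item (3) of Lemma \ref{bon}), the left-invariant vector fields $Y^{ij}_k(t)$ on $N$ converge to $X^{ij}_k$ uniformly on compact subsets. Combined with the convergence $u_t\to u_\infty$ in $L^{p,\kappa}_{\mathrm{loc}}(E,dV_N)$ established at the start of this section via the contractivity of the hororadial shift, integration by parts against any test function $\phi\in C^\infty_c(E)$ yields
$$\int_E X^{ij}_k u_\infty\cdot\phi\, dV_N = \lim_{t\to+\infty}\int_N Y^{ij}_k(t) u_t\cdot\phi\,dV_N.$$

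The key pointwise bound is
$$|Y^{ij}_k(t) u_t(x)| = |du_{(x,t)}(Y^{ij}_k(t))| \leq \|Y^{ij}_k(t)|_{(x,t)}\|\cdot|du_{(x,t)}| \asymp t^{2k-m_{ij}-1}e^{\lambda t\mu_i}|du_{(x,t)}|,$$
by (\ref{equivlambda}) and Cauchy--Schwarz. Writing $dV_N\,dt = e^{-\lambda\mu t}dV$ and applying Young's inequality for the conjugate pair $(\phi_{p,\kappa},\phi^*_{p,\kappa})$ gives
$$\int_0^\infty\int_N|Y^{ij}_k(t)u_t|\phi\, dV_N dt \;\lesssim\; \int\phi_{p,\kappa}(|du|)dV + \int\phi^*_{p,\kappa}\bigl(W(t)\phi(x)\bigr)dV,$$
with weight $W(t) = t^{2k-m_{ij}-1}e^{-\lambda t(\mu-\mu_i)}$ decaying exponentially as $t\to\infty$. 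The first term on the right is finite by hypothesis, and for the second the asymptotic $\phi^*_{p,\kappa}(s)\asymp s^{p'}\log(e+s^{-1})^{\kappa/(p-1)}$ applies since $W(t)\phi\to 0$. At the critical exponent $p=p_i$ the exponential factors cancel exactly (since $(\mu-\mu_i)p' = \mu$), and finiteness reduces to a polynomial-logarithmic integrability at $t\to\infty$, whose sharp threshold matches $(p,\kappa)\leq (p_i,1+p_i(m_{ij}+1-2k))$.

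Finiteness of the resulting slab integral implies $\liminf_{t\to\infty}|\int_N Y^{ij}_k(t)u_t\,\phi\,dV_N| = 0$, hence $\int_E X^{ij}_k u_\infty\cdot\phi\,dV_N = 0$ for all $\phi\in C^\infty_c(E)$, and therefore $X^{ij}_k u_\infty = 0$ a.e. on $E$. The main technical obstacle is the sharp matching of the logarithmic corrections: a crude Young bound only delivers $\kappa<1+p_i(m_{ij}-2k)$, losing roughly a unit of $p_i$ in the log exponent. Recovering the full threshold requires either decomposing the vertical axis into slabs $K\times[n,n+1]$ and applying a discrete Orlicz--H\"older to the sum $\sum_n \|du\|_{\phi_{p,\kappa},K\times[n,n+1]}$ (exploiting the precise scaling of Luxembourg norms on such slabs), or alternatively translating to the Besov side by Theorem \ref{thelocaltracemap} and using the asymptotic $\varrho_\infty(x,x\exp(s\partial^{ij}_k))^{\lambda\mu_i}\asymp s\log(s^{-1})^{k-1}$ to identify the missing logarithmic factor.
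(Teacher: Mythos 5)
The opening of your proof is on the right track and matches the paper: you correctly identify that $Y^{ij}_k(t)u_t\to X^{ij}_k u_\infty$ in the sense of distributions (using item (3) of Lemma \ref{bon} together with the $L^{p,\kappa}_{\mathrm{loc}}$ convergence $u_t\to u_\infty$), and the pointwise bound $|Y^{ij}_k(t)u_t|\leq\tau_k(t)|d_{(x,t)}u|$ with $\tau_k(t)\asymp t^{2k-m_{ij}-1}e^{\lambda t\mu_i}$ is exactly what the paper uses. But your decisive step is to apply Young's inequality for the conjugate pair $(\phi_{p,\kappa},\phi^*_{p,\kappa})$, and you concede yourself that this delivers only $\kappa<1+p_i(m_{ij}-2k)$, falling short of the asserted threshold $\kappa\leq 1+p_i(m_{ij}+1-2k)$ by roughly $p_i$. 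The two fixes you then sketch (a slab decomposition with a discrete Orlicz--H\"older, or a detour through Theorem \ref{thelocaltracemap} and a claimed asymptotic $\varrho_\infty(x,x\exp(s\partial^{ij}_k))^{\lambda\mu_i}\asymp s\log(s^{-1})^{k-1}$) are not carried out, are not obviously correct as stated, and are not what the argument actually requires. So as written the proof has a real gap precisely at the critical exponent, which is the only interesting regime.

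The paper avoids duality altogether and uses a truncation that captures the sharp constant. Fix $\mathsf{c}>0$ and set $E_t=\{x:|Y^{ij}_k(t)u_t(x)|\geq e^{-\mathsf{c}t}\}$. On $E_t$ one has $\tau_k(t)/|Y^{ij}_k(t)u_t|\leq\tau_k(t)e^{\mathsf{c}t}$, so
$$\frac{\tau_k(t)^{-p}\,|Y^{ij}_k(t)u_t|^p}{\log^\kappa\bigl(e+\tau_k(t)e^{\mathsf{c}t}\bigr)}\;\leq\;\phi_{p,\kappa}\!\left(\frac{|Y^{ij}_k(t)u_t|}{\tau_k(t)}\right)\;\leq\;\phi_{p,\kappa}\bigl(|d_{(x,t)}u|\bigr),$$
and since $\log(e+\tau_k(t)e^{\mathsf{c}t})\asymp t$ for $t$ large, integrating against $dV=e^{\lambda\mu t}dV_N\,dt$ and adding the trivial contribution $|Y^{ij}_k(t)u_t|^p<e^{-p\mathsf{c}t}$ off $E_t$ yields
$$\int_{t_0}^{+\infty}\!\Bigl(\int_K|Y^{ij}_k(t)u_t|^p\,dV_N\Bigr)\frac{e^{\lambda(\mu-p\mu_i)t}}{t^q}\,dt<\infty,\qquad q=\kappa+p(2k-m_{ij}-1),$$
once $\mathsf{c}$ is chosen suitably. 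The hypothesis $(p,\kappa)\leq(p_i,1+p_i(m_{ij}+1-2k))$ is exactly the statement that this scalar weight has divergent integral over $[t_0,\infty)$ (exponential growth when $p<p_i$, and $q\leq 1$ when $p=p_i$), which forces a subsequence $t_l$ with $\|Y^{ij}_k(t_l)u_{t_l}\|_{L^p(K)}\to 0$, and hence $X^{ij}_k(u_\infty)=0$. The point you miss is that one does not need to estimate the $L^1$ pairing $\int|Y|\,\phi$ at all: the Luxembourg condition already gives $\int\phi_{p,\kappa}(|du|)\,dV<\infty$, and what is needed is only a lower bound on $\phi_{p,\kappa}(|Y|/\tau)$ by a power times $t^{-\kappa}$, which fails uniformly only when $|Y|$ is exponentially small --- whence the cutoff at $e^{-\mathsf{c}t}$. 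This truncation is what replaces Young's inequality and recovers the unit of $p_i$ your approach loses.
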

\begin{proof}
Let $u:X_{\alpha_\lambda}\to \R$ be as in the statement. For each $t\geq 0$, consider the function $Y_k(t)(u_t):E\to \R$. We first show that $Y_k(t)(u_t)\to X_k(u_\infty)$ in the sense of distributions.

\medskip
\noindent Let $\zeta:E\to\R$ be a smooth function with compact support $K$. We first observe that
\begin{align*}
\langle \left(Y_k(t)-X_k\right)(u_t),\zeta\rangle&= -\int_E u_t(x)d_e\left(\zeta\circ L_x\right)(v_k(t)-\partial_k)dV_N(x).
\end{align*}
Moreover, since $e\in \mathrm{supp}\left(\zeta\circ L_x\right)=L_x^{-1}\left(K\right)$ if, and only if, $x\in K$, we have
$$\left|\langle \left(Y_k(t)-X_k\right)(u_t),\zeta\rangle\right|\leq \|u_t\|_{1,K}\max_{x\in K}\left|d_e\left(\zeta\circ L_x\right)\right|_0\|v_k(t)-\partial_k\|_0$$
which tends to zero when $t\to +\infty$ by item (3) of Lemma \ref{bon}.

\medskip
\noindent Finally, we obtain
\begin{align*}
\left|\langle Y_k(t)(u_t),\zeta\rangle-\langle X_k(u_\infty),\zeta\rangle\right|& \leq \left|\langle Y_k(t)(u_t)-X_k(u_t),\zeta\rangle\right|+\left|\langle X_k(u_t)-X_k(u_\infty),\zeta\rangle\right|,
\end{align*}
which tends to zero since $X_k(u_t)\to X_k(u_\infty)$.

\medskip
\noindent We now show that $X_k(u_\infty)=0$. By the definition of $v_k(t)$ and (\ref{equivlambda}), we have
$$\tau_k(t):=\|Y_k(t)(x)\|_t=\|\mathrm{Exp}(t\alpha)(v_k(t))\|_0\asymp t^{2k-m_{ij}-1}e^{\lambda t\mu_i}.$$
Also
$$\left|Y_k(t)(u_t)(x)\right|\leq \tau_k(t)\left|d_{(x,t)}u\right|.$$
Let $\mathsf{c}>0$ be a constant to be determined later. Consider the set 
$$E_t=\left\{x\in E: \left|Y_k(t)(u_t)(x)\right|\geq e^{-\mathsf{c}t}\right\}.$$
Then, for $x\in E_t$ we have
$$\frac{\tau_k(t)^{-p}\left|Y_k(t)(u_t)(x)\right|^p}{\log^\kappa\left(e+\tau_k(t)e^{\mathsf{c}t}\right)}\leq \phi_{p,\kappa}\left(\frac{\left|Y_k(t)(u_t)(x)\right|}{\tau_k(t)}\right)\leq \phi_{p,\kappa}\left(\left|d_{(x,t)}u\right|\right).$$
Moreover, there exists a constant $\sf{C}>0$, and $t_0>0$, such that
$$\frac{1}{\sf{C}}\frac{e^{-p\lambda\mu_i t}\left|Y_k(t)(u_t)(x)\right|^p}{t^{\kappa+p(2k-m_{ij}-1)}}\leq \frac{\tau_k(t)^{-p}\left|Y_k(t)(u_t)(x)\right|^p}{\log^\kappa\left(e+\tau_k(t)e^{\mathsf{c}t}\right)},\text{ for } t\geq t_0.$$
Let $q=\kappa+p(2k-m_{ij}-1)$, then for any compact $K\subset E$, we have
$$A:=\int_{t_0}^{+\infty}\int_{E_t\cap K}\frac{\left|Y_k(t)(u_t)(x)\right|^pe^{\lambda(\mu-p\mu_i) t}}{t^q}dV_N(x)dt\leq \sf{C}\int_{K\times [t_0,+\infty)}\phi_{p,\kappa}\left(|du|\right)dV<\infty.$$
This implies
$$\int_{t_0}^{+\infty}\int_K\frac{\left|Y_k(t)(u_t)(x)\right|^pe^{\lambda(\mu-p\mu_i) t}}{t^q}dV_N(x)dt\leq A+V_N(K)\int_{t_0}^{+\infty}\frac{e^{\left(-p\mathsf{c}+\lambda(\mu-p\mu_i)\right) t}}{t^q}dt.$$
Let $\mathsf{c}>-\lambda p^{-1}(\mu-p\mu_i)$. Therefore, whenever $(p,\kappa)$ satisfies (\ref{condicion}), there is a subsequence $t_l\to+\infty$ such that
$$\left\|Y_k(t_l)(u_{t_l})\right\|_{p,K}\to 0,\text{ when }l\to +\infty.$$
This shows that $X_k(u_{\infty})=0$.
\end{proof}

\medskip
\noindent We denote by $\{\theta^{ij}_k\}$ the basis of $V_{ij}^*$, dual to the standard left invariant basis $\{X_k^{ij}\}$. For each $t\in\R$, we have
\begin{equation}\label{dualnorm}
\left|\theta^{ij}_k\right|_t=\left\|\mathrm{Exp}\left(-t\alpha_\lambda^*\right)(\partial^{ij}_k)\right\|_0\asymp\left[\sum_{r=k}^{m_{ij}}\frac{t^{2(r-k)}}{(r-k)!^2}\right]^{1/2}e^{-\lambda\mu_i t}\asymp t^{m_{ij}-k}e^{-\lambda\mu_i t}.
\end{equation}
In the following lemma, we will use the functions $\sf{i}$ and $\sf{m}$ that assign to a subset $\mathcal{I}_0$ of $\mathcal{I}$ the integers
\begin{align*}
\sf{i}=\sf{i}\left(\mathcal{I}_0\right)&:=\min\left\{r:(r,s,l)\notin \mathcal{I}_0\right\},\\
\sf{m}=\sf{m}\left(\mathcal{I}_0\right)&:=\max\left\{m_{\sf{i}s}-l:(\sf{i},s,l)\notin \mathcal{I}_0\right\}.
\end{align*}

\noindent Given a smooth function $u_\infty:N\to \R$, by its hororadial extension we mean the function independent of $t$, $u:X_{\alpha_\lambda}\to\R$, defined by $u(x,t)=u_\infty(x)$.

\begin{lemma}\label{nonconstant}
Let $p\geq 1$ and $\kappa\geq 0$. Fix $\mathcal{I}_0$ a subset of $\mathcal{I}$ and denote by $\sf{i}=\sf{i}\left(\mathcal{I}_0\right)$ and $\sf{m}=\sf{m}\left(\mathcal{I}_0\right)$. Consider $u_\infty:N\to \R$ a smooth function such that
$$X^{rs}_l(u_\infty)=0\text{ for all }(r,s,l)\in\mathcal{I}_0.$$
Let $u:X_{\alpha_\lambda}\to \R$ be its hororadial extension. Then $du\in L^{p,\kappa}\left(K\times[t_0,+\infty)\right)$ for any compact $K\subset N$ and $t_0\in\R$, whenever
\begin{equation*}
(p,\kappa)>\big(p_{\sf{i}},1+p_{\sf{i}}\sf{m}\big).
\end{equation*}
Moreover, in both cases, if $u_\infty$ is not identically zero and the compact $K\subset N$ has positive measure, then $u\notin L^{p,\kappa}\left(K\times[0,+\infty)\right)$.
\end{lemma}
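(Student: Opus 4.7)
The plan is to use that the hororadial extension $u$ is independent of $t$, so $du$ is a purely horizontal $1$-form which we can expand in the left-invariant dual basis as
$$du = \sum_{(r,s,l)\in\mathcal{I}\setminus\mathcal{I}_0} X^{rs}_l(u_\infty)\,\theta^{rs}_l,$$
the terms indexed by $\mathcal{I}_0$ vanishing by hypothesis. Since $u_\infty$ is smooth, the coefficients $X^{rs}_l(u_\infty)$ are bounded on any compact $K$. Combining this with the estimate $|\theta^{rs}_l|_t \asymp t^{m_{rs}-l}e^{-\lambda\mu_r t}$ from (\ref{dualnorm}), and comparing exponential decay rates, I would show for $t\geq t_0$ large and $x\in K$ that
$$|du|_{(x,t)} \lesssim t^{\sf{m}}\, e^{-\lambda\mu_{\sf{i}} t},$$
since any term with $r>\sf{i}$ decays strictly faster, while among the terms with $r=\sf{i}$ the polynomial factor is maximized precisely when $m_{\sf{i}s}-l=\sf{m}$ (by the definitions of $\sf{i}$ and $\sf{m}$).

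Next, since $\phi_{p,\kappa}$ is doubling, membership in $L^{p,\kappa}$ is equivalent to integrability of $\phi_{p,\kappa}(|du|)\,dV$. As $|du|_{(x,t)}\to 0$ when $t\to+\infty$, I use the asymptotic $\phi_{p,\kappa}(\xi)\asymp \xi^p/\log(1/\xi)^\kappa$ for small $\xi$, together with $\log(e+t^{-\sf{m}}e^{\lambda\mu_{\sf{i}} t})\sim\lambda\mu_{\sf{i}} t$, to obtain
$$\phi_{p,\kappa}\bigl(|du|_{(x,t)}\bigr) \lesssim \frac{t^{p\sf{m}}\,e^{-p\lambda\mu_{\sf{i}} t}}{t^\kappa}.$$
Multiplying by $dV=e^{\lambda\mu t}\,dV_N\,dt$ and integrating over $K\times[t_0,+\infty)$ reduces the question to the convergence of
$$\int_{t_0}^{+\infty}t^{\,p\sf{m}-\kappa}\,e^{\lambda(\mu-p\mu_{\sf{i}})t}\,dt.$$
This integral converges in exactly two cases: either $p>p_{\sf{i}}=\mu/\mu_{\sf{i}}$, in which case the exponential wins for any $\kappa\geq 0$; or $p=p_{\sf{i}}$, in which case one needs $p\sf{m}-\kappa<-1$, that is $\kappa>1+p_{\sf{i}}\sf{m}$. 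Together, these two cases are precisely the lexicographic strict inequality $(p,\kappa)>(p_{\sf{i}},1+p_{\sf{i}}\sf{m})$, which proves the first assertion.

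For the ``moreover'' statement, I observe that $u(x,t)=u_\infty(x)$ is constant in $t$. If $u_\infty$ is not identically zero on $K$, there is some $c>0$ such that $K_c=\{x\in K:|u_\infty(x)|>c\}$ has positive $V_N$-measure, and for any $\alpha>0$,
$$\int_{K\times[0,+\infty)}\phi_{p,\kappa}\!\left(\frac{|u|}{\alpha}\right)dV \;\geq\; \phi_{p,\kappa}\!\left(\frac{c}{\alpha}\right)V_N(K_c)\int_0^{+\infty}e^{\lambda\mu t}\,dt \;=\; +\infty,$$
so the Luxembourg norm is infinite. The main technical point is the identification of the dominant term in the expansion of $du$, together with the careful asymptotic analysis of $\phi_{p,\kappa}$ near zero; once these are pinned down, the lexicographic critical threshold emerges naturally from the interplay between the polynomial factor $t^{\sf{m}}$ and the exponential factor $e^{-\lambda\mu_{\sf{i}} t}$.
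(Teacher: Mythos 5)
Your proof is correct and follows essentially the same route as the paper's own argument. Both of you expand $du$ in the dual basis $\{\theta^{rs}_l\}$, kill the terms indexed by $\mathcal{I}_0$ using the hypothesis, isolate the dominant contribution $t^{\sf{m}}e^{-\lambda\mu_{\sf{i}}t}$ via the dual-norm estimate (\ref{dualnorm}) and the definitions of $\sf{i}$, $\sf{m}$, and then reduce to the convergence of $\int_{t_0}^{\infty}t^{p\sf{m}-\kappa}e^{\lambda(\mu-p\mu_{\sf{i}})t}\,dt$; you merely make explicit the asymptotics of $\phi_{p,\kappa}$ near zero and the two-case analysis of the lexicographic threshold that the paper compresses into ``the conclusion follows from this inequality,'' and your ``moreover'' argument is the same observation the paper states in one line.
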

\begin{proof}
Let $K\subset N$ be a compact set. Since $u$ does not depend on $t$, it is clear that $u\notin L^{p,\kappa}\left(K\times[0,+\infty)\right)$ when $K$ has positive measure, unless $u_\infty$ is identically zero.

\medskip
\noindent If $\mathcal{I}_0=\mathcal{I}$, there is nothing to prove since $u$ is constant. So suppose that $\mathcal{I}_0$ is a proper subset of $\mathcal{I}$. Let $\sf{C}$ be an upper bound in $K$ for $|X^{rs}_l(u_\infty)|$, for any $(r,s,l)\in\mathcal{I}$. Then, by (\ref{dualnorm}), there is $t_1>0$ such that
$$\left|d_{(x,t)}u\right|\leq \sf{C}\sum_{(r,s,l)\notin \mathcal{I}_0}\left|\theta^{rs}_l\right|_t\lesssim \sum_{(r,s,l)\notin\mathcal{I}_0}t^{m_{rs}-l}e^{-\lambda\mu_r t}\lesssim t^\sf{m}e^{-\lambda\mu_{\sf{i}}t},$$ 
for $x\in K$, and $t\geq  t_1.$ Therefore,
$$\int_{t_1}^{+\infty}\int_K\phi_{p,\kappa}\left(|d_{(x,t)}u|\right)e^{\lambda\mu t}dV_N(x)dt\lesssim V_N(K)\int_{t_1}^{+\infty}\frac{e^{\lambda(\mu-p\mu_{\sf{i}})t}}{t^{\kappa-p\sf{m}}}dt.$$
The conclusion follows from this inequality.
\end{proof}

\noindent We will apply the previous lemma later to the following subsets of indices. Let $\mathcal{K}_0=\emptyset$, and for each $i\in \{1,\ldots,d\}$, let
\begin{equation}\label{indexK}\mathcal{K}_i=\{(r,j,k):r\leq i\},\ \mathcal{H}_i=\mathcal{K}_{i-1}\cup\{(i,j,1):m_{ij}=m_i\}.\end{equation}
Then, for $0\leq i<d$ we have 
$$\sf{i}(\mathcal{K}_i)=i+1,\text{ and } \mathsf{m}(\mathcal{K}_i)=m_{i+1}-1.$$
Also, for any $i$, we have 
$$\sf{i}(\mathcal{H}_i)=i,\text{ and }\mathsf{m}(\mathcal{H}_i)=m_i-2,$$
if $m_i\geq 2$. Notice that if $m_i=1$, then $\mathcal{K}_i=\mathcal{H}_i$.

\medskip
\noindent For $\xi\in N$, $r>0$ and $t\in \R$, we denote by $Y(r,t)$ the complement in $X_{\alpha_\lambda}$ of the set $B(\xi,r)\times (t,+\infty)$, where $B(\xi,r)$ is the open ball of radius $r$ centered at $\xi$ in $N$.

\medskip
\noindent The proof of the next lemma is inspired by \cite[Lemma 20]{PierrePansu07}.

\begin{lemma}\label{infinity}
Let $\xi\in N$, $(p,\kappa)\in I$, and let $u:X_{\alpha_\lambda}\to \R$ be a smooth function such that $du\in L^{p,\kappa}\left(Y(r,t)\right)$, for all $r>0$ and $t\in \R$. Suppose that there is a left-invariant vector field $X$ of $N$, generated by an eigenvector $X_e$ of $\alpha$, such that $X(u_\infty)=0$. If $Z$ is a left-invariant vector field of $N$ such that $[X,Z]=0$, then $Z(u_\infty)=0$.
\end{lemma}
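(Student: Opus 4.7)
The first step is purely algebraic: since $[X,Z]=0$ and $X(u_\infty)=0$, we have $X(Zu_\infty) = Z(Xu_\infty) + [X,Z]u_\infty = 0$. Hence $Z(u_\infty)$ is invariant under the flow $\phi_s^X \colon x \mapsto x \cdot \exp(sX_e)$ on $N$, i.e., constant along each $X$-orbit. This is the only place where the commutativity is used directly.

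Assume for contradiction that $Z(u_\infty) \not\equiv 0$. Then there exist a compact $K \subset N \setminus \{\xi\}$ and a constant $c > 0$ with $|Z(u_\infty)| \geq c$ on $K$. Choose $r > 0$ so that $K \cap \overline{B(\xi,r)} = \emptyset$, and fix $t_0 \in \R$; the hypothesis then provides $du \in L^{p,\kappa}(K \times [t_0,+\infty))$. The radial-limit analysis of Section \ref{boundary}, adapted to horospherical coordinates as in Section \ref{localization}, yields $u_t \to u_\infty$ in $L^{p,\kappa}_{\mathrm{loc}}(N\setminus\{\xi\})$; since $u$ is smooth, this upgrades to $Z(u_t) \to Z(u_\infty)$ in $L^1_{\mathrm{loc}}$. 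A Chebyshev-type argument then produces $T$ and sets $K_t \subset K$ of uniformly positive measure on which $|Z(u_t)(x)| \geq c/2$ for every $t \geq T$.

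The crux of the argument is the pointwise comparison
$$|du|_{(x,t)} \geq \frac{|Z(u)(x,t)|}{\|Z\|_{(x,t)}},$$
combined with the growth $\|Z\|_{(x,t)} = \|\mathrm{Exp}(t\alpha_\lambda)(Z_e)\|_0 \lesssim (1+t)^M e^{\lambda \mu_{j^*}t}$, where $\mu_{j^*}$ is the largest eigenvalue of $\alpha$ appearing in the decomposition of $Z_e$ and $M$ is controlled by the Jordan blocks of $\alpha$ in $V_{j^*}$. Using $dV = e^{\lambda\mu t}\,dV_N\,dt$ and the small-argument behavior of $\phi_{p,\kappa}$ from Section \ref{preOrlicz}, one would obtain
$$\int_{K \times [T,+\infty)} \phi_{p,\kappa}(|du|)\,dV \gtrsim \int_T^{+\infty} \frac{e^{\lambda(\mu - p\mu_{j^*})t}}{(1+t)^{\kappa + p M}}\, dt,$$
and finiteness of the left-hand side, guaranteed by the hypothesis, must contradict divergence of the right-hand side.

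The main obstacle is that divergence of the last integral requires $(p,\kappa) \leq \bigl(p_{j^*},\,1 + p_{j^*} M\bigr)$, which is not automatic for an arbitrary $Z$ commuting with $X$. To close the gap I would exploit the commutativity $[X,Z]=0$ algebraically: applying $\alpha$ to $[X_e, Z_e]=0$ shows that the centralizer of $X_e$ in $\mathfrak{n}$ is $\alpha$-invariant, so each generalized-eigenspace component of $Z_e$ separately commutes with $X_e$. This reduces the problem to $Z_e$ lying in a single generalized eigenspace, and then matching $\mu_{j^*}$ with the eigenvalue regime in which the hypothesis $X(u_\infty)=0$ is available via Lemma \ref{critical} should place $(p,\kappa)$ inside the range where the integral diverges, finishing the proof.
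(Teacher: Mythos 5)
Your approach diverges substantially from the paper's, and as you yourself acknowledge, it does not close. The paper makes no attempt at a Chebyshev-type contradiction or at bounding the $\phi_{p,\kappa}$-integral of $|du|$ from below. Instead it uses a soft push-to-infinity argument based on the flow $\varphi_s$ of the left-invariant field $\tilde X$ on $X_{\alpha_\lambda}$ generated by $(X_e,0)$: (a) the flow is \emph{proper} (for any compact $K$ there is $l_K$ with $\varphi_s(K)\cap\varphi_{s'}(K)=\emptyset$ once $|s-s'|\ge l_K$, because $\exp(sX_e)$ is a closed one-parameter subgroup) and \emph{measure-preserving} ($\mathrm{tr}(\mathrm{ad}_{\tilde X})=0$); (b) consequently, any $w\in L^{p,\kappa}(Y(r,t))$ for all $r,t$ satisfies $\|w\circ\varphi_s\|_{1,K}\to 0$ as $s\to+\infty$ for every compact $K$; (c) the hororadial extension decomposes as $\tilde u_\infty = w + u$ with $w\in L^{p,\kappa}(Y(r,t))$; and (d) since $\tilde u_\infty$ is flow-invariant ($\tilde X(\tilde u_\infty)=0$) and $\tilde Z$ commutes with $\varphi_s$, one gets $\tilde Z(\tilde u_\infty)=\tilde Z(w\circ\varphi_s)+\tilde Z(u)\circ\varphi_s\to 0$ in distributions. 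This yields the conclusion for \emph{every} $(p,\kappa)\in I$ and every commuting $Z$, with no eigenvalue restrictions.

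The gap in your argument is exactly the one you flagged and it cannot be patched the way you suggest. Divergence of $\int_T^\infty e^{\lambda(\mu-p\mu_{j^*})t}(1+t)^{-\kappa-pM}\,dt$ needs $(p,\kappa)\le(p_{j^*},1+p_{j^*}M)$; but the lemma is asserted for \emph{all} $(p,\kappa)\in I$ and for arbitrary commuting $Z$, in particular when $Z_e$ has components in a generalized eigenspace with a large eigenvalue, in which regime the integral converges and no contradiction results. The proposed repair --- observing that the centralizer of $X_e$ is $\alpha$-invariant, hence decomposing $Z_e$ into generalized-eigenspace components --- is correct algebra but does not force the needed inequality on $(p,\kappa)$; moreover, appealing to Lemma \ref{critical} to ``place $(p,\kappa)$ inside the range'' is circular, since Lemma \ref{infinity}'s hypothesis is only $X(u_\infty)=0$ and says nothing about where $(p,\kappa)$ sits relative to $\mu_{j^*}$. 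The essential idea you are missing is to abandon the quantitative lower bound entirely and exploit the properness and volume-preservation of the $\tilde X$-flow to push the integrability condition out to infinity; your algebraic observation that $Z(u_\infty)$ is constant along $X$-orbits is valid but is not a substitute for that mechanism.
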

\begin{proof}
We denote by $\tilde{X}$ and $\tilde{Z}$ the left-invariant vector fields of $X_{\alpha_\lambda}$ generated by $(X_e,0)$ and $(Z_e,0)$ respectively. Notice that they coincide with $(X,0)$ and $(Z,0)$ in $N$. Let $\tilde{u}_\infty$ be the hororadial extension of $u_\infty$. Since $X_e$ is an eigenvector of $\alpha$, we have $\tilde{X}(\tilde{u}_\infty)=0$. Moreover, $\tilde{X}$ and $\tilde{Z}$ also commute.

\medskip
\noindent First, notice that since the exponential map of $N$ is a diffeomorphism, the one-parameter subgroup $\{\exp(s (X_e,0)):s\in\R\}$ is closed in $X_{\alpha_\lambda}$. This implies that the flow of $\tilde{X}$,
$$\varphi_s(x,t)=(x,t)\cdot\exp(s (X_e,0)),$$
satisfies the following property: for any compact subset $K\subset X_{\alpha_\lambda}$, there exists $l_K\geq 0$ such that if $|s-s'|\geq l_k$, then $\varphi_{s}(K)\cap\varphi_{s'}(K)=\emptyset$. 

\medskip
\noindent Moreover, since $N$ is nilpotent and the coefficient of $[\tilde{X},\partial_t]$ in $\partial_t$ is zero, we see that $\mathrm{tr}\left(\mathrm{ad}_{\tilde{X}}\right)=0$. In particular,
$$\varphi_s^*(dV)=e^{-\mathrm{tr}\left(\mathrm{ad}_{\tilde{X}}\right)}dV=dV,$$
i.e. the flow $\varphi_s$ preserves the measure $dV$.

\medskip
\noindent Suppose that $w:X_{\alpha_\lambda}\to\R$ is a function in $L^{p,\kappa}\left(Y(r,t)\right)$ for all $r$ and $t$. We claim that for each compact set $K\subset X_{\alpha_\lambda}$, we have
$$\left\|w\circ \varphi_s\right\|_{1,K}\to 0, \text{ when }s\to +\infty.$$
Indeed, fix such a compact set $K$, and choose $r$ and $t$ so that $\varphi_s(K)\subset Y(r,t)$ for all $s\in\R_+$. Given any sequence $s_l\to +\infty$, we can extract a subsequence $s_{l'}\to +\infty$ such that the images $\{\varphi_{s_{l'}}(K):l'\in\N\}$ are pairwise disjoint. Therefore,
$$\sum_{l'\in\N}\int_K\phi_{p,\kappa}\left(w\circ\varphi_{s_{l'}}\right)dV=\sum_{l'\in\N}\int_{\varphi_{s_{l'}}(K)}\phi_{p,\kappa}\left(w\right)dV\leq \int_{Y(r,t)}\phi_{p,\kappa}(w)dV<\infty.$$
This implies that $\left\|w\circ\varphi_{s_{l'}}\right\|_{p,\kappa,K}\to 0$ when $l'\to +\infty$. Since $L^{p,\kappa}(K)\subset L^1(K)$ and the inclusion is norm continuous, the claim follows.

\medskip
\noindent Write $\tilde{u}_\infty=w+u$, where $w\in L^{p,\kappa}(Y(r,t))$, for all $r$ and $t$. From the claim above, we have that $w\circ \phi_s\to 0$, when $s\to +\infty$, in the sense of distributions. Also, since $du\in L^{p,\kappa}(Y(r,t))$ for all $r$ and $t$, and $\tilde{Z}$ has constant norm, the same is true for the function $\tilde{Z}(u)$

\medskip
\noindent Again by the claim above, we have $\tilde{Z}(u)\circ \varphi_s\to 0$, when $s\to+\infty$, in the sense of distributions. But since $\tilde{Z}$ commutes with $\tilde{X}$, we have
$$\tilde{Z}(u)\circ \varphi_s=\tilde{Z}\left(u\circ\varphi_s\right), \text{ for all }s\in\R.$$
Therefore,
$$\tilde{Z}\left(\tilde{u}_\infty\right)=\tilde{Z}\left(\tilde{u}_\infty\circ\varphi_s\right)=\tilde{Z}\left(w\circ\phi_s\right)+\tilde{Z}\left(u\circ\varphi_s\right)\to 0,$$
in the sense of distributions when $s\to+\infty$. That is, $\tilde{Z}\left(\tilde{u}_\infty\right)=0$.

\medskip
\noindent Let $\zeta:N\setminus\{x_0\}\to\R$ and $\eta:\R\to\R$ be smooth functions with compact support. Let $\zeta\times\eta(x,t)=\zeta(x)\eta(t)$. Notice that $\tilde{Z}(x,t)=(Z_t(x),0)$ where $Z_t$ is a left-invariant vector field in $N$. Since $\eta$ does not depend on $x\in N$, we have $Z_t(\eta)=0$. Then
$$0=\langle \tilde{Z}\left(\tilde{u}_\infty\right),\zeta\times\eta\rangle=-
\int_\R\langle u_\infty,Z_t(\zeta)\rangle\eta(t)e^{\lambda\mu t}dt.$$
Since $\zeta$ and $\eta$ where arbitrary, this shows that $Z(u_\infty)=0$.
\end{proof}

\begin{corollary}\label{constantctp}
Let $\xi\in N$, $(p,\kappa)\in I$, and let $u:X_{\alpha_\lambda}\to \R$ be a smooth function satisfying the hypotheses of Lemma \ref{infinity}. Then $u_\infty:N\setminus\{\xi\}\to\R$ is constant a.e..
\end{corollary}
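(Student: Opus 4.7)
The plan is to introduce the set
\[
S:=\{Z\in\mathfrak{n}:Z(u_\infty)=0\text{ as a distribution on }N\setminus\{\xi\}\},
\]
which is a linear subspace of $\mathfrak{n}$ containing, by the very hypothesis of Lemma~\ref{infinity}, the eigenvector $X$. I would reduce the corollary to showing $S=\mathfrak{n}$; once this holds, the distributional derivative of $u_\infty$ along every left-invariant vector field on $N$ vanishes, which, together with the connectedness of $N\setminus\{\xi\}$ when $\dim N\geq 2$, forces $u_\infty$ to be constant almost everywhere on $N\setminus\{\xi\}$.

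First, I would apply Lemma~\ref{infinity} directly to the given eigenvector $X$: its conclusion yields the inclusion $\mathfrak{z}(X)\subset S$, where $\mathfrak{z}(X)$ denotes the centralizer of $X$ in $\mathfrak{n}$. In particular, since every element of the center $Z(\mathfrak{n})$ commutes with $X$, this gives $Z(\mathfrak{n})\subset\mathfrak{z}(X)\subset S$. Second, I would exploit the nilpotency of $\mathfrak{n}$ to pass to a \emph{central} eigenvector of $\alpha$. The subspace $Z(\mathfrak{n})$ is $\alpha$-invariant (because $\alpha$ is a derivation, whence $[\alpha W,Z]=\alpha[W,Z]-[W,\alpha Z]=0$ whenever $W\in Z(\mathfrak{n})$ and $Z\in\mathfrak{n}$) and is nonzero since $\mathfrak{n}$ is nilpotent; as we are in the purely real case, $\alpha|_{Z(\mathfrak{n})}$ admits a nonzero real eigenvector $W\in Z(\mathfrak{n})\subset S$. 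A second application of Lemma~\ref{infinity}, now with $X$ replaced by $W$, is then legitimate: the integrability hypothesis on $du$ is unchanged, $W$ is an eigenvector of $\alpha$, and $W(u_\infty)=0$ because $W\in S$. Its conclusion, combined with the centrality $[W,Z]=0$ for every $Z\in\mathfrak{n}$, gives $Z(u_\infty)=0$ for all $Z\in\mathfrak{n}$, i.e.\ $S=\mathfrak{n}$.

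The main point requiring some care is the final step: passing from the vanishing of all left-invariant distributional derivatives of $u_\infty$ to its almost-everywhere constancy on the open set $N\setminus\{\xi\}$. For $\dim N\geq 2$ this is standard since $N\setminus\{\xi\}$ remains connected and left-invariant vector fields span every tangent space of $N$. The one-dimensional case is degenerate---$\mathfrak{n}$ is abelian, so the initial hypothesis $X(u_\infty)=0$ alone already gives the conclusion on each of the two components of $N\setminus\{\xi\}$---and can be handled separately without even invoking the two-step scheme above.
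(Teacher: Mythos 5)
Your proof is correct and takes a genuinely different route from the paper's. Both arguments begin by applying Lemma~\ref{infinity} to show that every central $Z$ annihilates $u_\infty$, but then they diverge. The paper picks a single nonzero $Z\in Z(\mathfrak{n})$, notes that $u_\infty\circ\varphi_s=u_\infty$ where $\varphi_s$ is right (equivalently left) multiplication by $\exp(sZ)$, and rerunns the properness argument that underlies Lemma~\ref{infinity} itself to conclude that the Orlicz--Besov seminorm of $u_\infty$ vanishes on every compact subset of $N\setminus\{\xi\}$. You instead \emph{bootstrap} Lemma~\ref{infinity}: you observe that $Z(\mathfrak{n})$ is $\alpha$-invariant and nonzero (by nilpotency), hence in the purely real case it contains an $\alpha$-eigenvector $W$, which already lies in $S$; a second application of the lemma with $W$ in place of $X$ (legitimate because $W(u_\infty)=0$ and $W$ is an eigenvector), combined with the centrality of $W$, gives $Z(u_\infty)=0$ for all $Z\in\mathfrak{n}$, i.e.\ the full left-invariant gradient of $u_\infty$ vanishes on the connected open set $N\setminus\{\xi\}$. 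Your route is more algebraic and self-contained (one never needs to reargue the flow--properness estimate), while the paper's route is a natural reprise of the analytic mechanism already set up in Lemma~\ref{infinity}. The one thing worth flagging is the $\dim N=1$ case, where $N\setminus\{\xi\}$ is disconnected: vanishing of all derivatives only gives constancy on each component, not globally. You acknowledge this and defer it; note that this subtlety is present in the paper's argument as well (the a.e.\ flow--invariance $u_\infty\circ\varphi_s=u_\infty$ is deduced from $Z(u_\infty)=0$ on $N\setminus\{\xi\}$, and flow lines crossing $\xi$ are broken), and this degenerate case corresponds to $X_\alpha\cong\mathbb{H}^2$, which is classical. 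So there is no real gap: your proof is correct and, modulo the shared 1D caveat, arguably cleaner.
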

\begin{proof}
Let $Z\neq 0$ be a left-invariant vector field in the center of $\mathfrak{n}$, which exists because $N$ is nilpotent. By Lemma \ref{infinity}, $Z(u_\infty)=0$, that is $u_\infty\circ\varphi_s=u_\infty$ a.e., where $\varphi_s(x)=R_{\exp(sZ)}(x)=L_{\exp(sZ)}(x)$. Using again the fact that the action $s\mapsto\varphi_s$ is proper, we conclude that the Orlicz-Besov norm of $u_\infty$ is zero on any compact set of $N\setminus\{\xi\}$. This proves the lemma.
\end{proof}

\subsection{Proofs of main results}

We now apply the results obtained in the previous section to compute the local and global cohomology of $X_\alpha$.

\medskip
\noindent Recall that for each $i\in\{1,\ldots,d\}$, we have defined $m_i=\max\{m_{ij}:1\leq j\leq k_i\}$, and the subspaces $W_0=\{0\}$, $W_i=V_1\oplus\cdots\oplus V_i$, and 
$$\mathfrak{k}_i=\mathrm{LieSpan}\left(W_i\right),\ V_i^0=\mathrm{Span}\left(\partial^{ij}_1:m_{ij}=m_i\right),\ \mathfrak{h}_i= \mathrm{LieSpan}\left(W_{i-1}\oplus V_i^0\right).$$
We denote also by $K_i$ and $H_i$ the closed Lie subgroups of $N$ whose Lie algebra are $\mathfrak{k}_i$ and $\mathfrak{h}_i$ respectively. Note that the set of left cosets $N/K_i$ are $N/H_i$ are a smooth manifold, and the canonical projections are smooth maps.

\begin{proof}[Proof of Theorem \ref{localclassesinfty}]
First, suppose that $p\in\left(p_{i+1},p_i\right)$ and $\kappa\geq 0$. Let $u:X_\alpha\to\R$ be a smooth function with $du\in L^{p,\kappa}\left(K\times [t_0,+\infty)\right)$, for any compact set $K\subset N$ and $t_0\in\R$. If $i=d$, there is nothing to prove, so suppose $i<d$. Consider the subset $\mathcal{K}_i$ defined in (\ref{indexK}). By Lemma \ref{critical}, $X^{rj}_k(u_\infty)=0$ for all $(r,j,k)\in \mathcal{K}_i$, and thus $X(u_\infty)=0$ for any left-invariant field $X\in \mathfrak{k}_i$. If we suppose $u_\infty$ continuous, this implies that $u_\infty$ is constant on any left coset of $K_i$, and can therefore be regarded as a continuous function on $N/K_i$.

\medskip
\noindent  Note that $\sf{i}(\mathcal{K}_i)=i+1$ and $p>p_{i+1}$. By Lemma \ref{nonconstant}, any smooth function $u_\infty$ on $N/H_i$ can be identified with the hororadial limit of a smooth function $u:X_\alpha\to\R$ with $du\in L^{p,\kappa}\left(K\times [t_0,+\infty)\right)$, for any compact $K\subset N$ and $t_0\in\R$. This shows \emph{(1)}.

\medskip
\noindent Let us prove \emph{(2)}. Suppose that $p=p_i$ and $m_i=1$. Let $\kappa\geq0$. By point \emph{(1)}, if $u:X_\alpha\to\R$ is a smooth function with $du\in L^{p,\kappa}\left(K\times [t_0,+\infty)\right)$, for any compact set $K\subset N$ and $t_0\in\R$, then $X(u_\infty)=0$ for any $X\in \mathfrak{k}_{i-1}$. 

\medskip
\noindent If $\kappa\leq 1$, because $m_i=1$, Lemma \ref{critical} implies that $X(u_\infty)=0$ for any $X\in \mathfrak{k}_i$. If $i=d$, there is nothing more to prove. If not, since $p>p_{i+1}$, the same argument as in \emph{(1)} concludes the proof of the first part of \emph{(2)}.

\medskip
\noindent If $\kappa>1$, because $\sf{i}(\mathcal{K}_{i-1})=i$ and $\sf{m}(\mathcal{K}_{i-1})=m_i-1=0$, Lemma \ref{nonconstant} implies that any smooth function $u_\infty$ on $N/K_{i-1}$ can be identified with the hororadial limit of a smooth function $u:X_\alpha\to\R$ with $du\in L^{p,\kappa}\left(K\times [t_0,+\infty)\right)$, for any compact $K\subset N$ and $t_0\in\R$. This completes the proof of \emph{(2)}.

\medskip
\noindent Suppose now that $p=p_i$ and $1+p_i(m_i-2)<\kappa\leq 1+p_i(m_i-1)$, with $m_i\geq 2$. Consider the subset $\mathcal{H}_i$ defined in (\ref{indexK}). Let $u:X_\alpha\to\R$ be a smooth function with $du\in L^{p,\kappa}\left(K\times [t_0,+\infty)\right)$, for any compact set $K\subset N$ and $t_0\in\R$. By Lemma \ref{critical}, $X^{rj}_k(u_\infty)=0$ for all $(r,j,k)\in\mathcal{H}_i$. Thus $X(u_\infty)=0$ for any left-invariant field $X\in \mathfrak{h}_i$. If we suppose $u_\infty$ continuous, this implies that $u_\infty$ is constant on any left coset of $H_i$, and can be regarded as a function on $N/H_i$.

\medskip
\noindent Since $\sf{i}(\mathcal{H}_i)=i$, $\sf{m}(\mathcal{H}_i)=m_i-2$, and $\kappa>1+p_{i}(m_i-2)$, by Lemma \ref{nonconstant}, any smooth function $u_\infty$ on $N/H_i$ can be identified with the hororadial limit of a smooth function $u:X_\alpha\to\R$ with $du\in L^{p,\kappa}\left(K\times [t_0,+\infty)\right)$, for any compact $K\subset N$ and $t_0\in\R$. This shows \emph{(3)}.
\end{proof}

\begin{proof}[Proof of Theorem \ref{localclassesN}]
Let $[u]\in L^{p,\kappa}H^1(X_\alpha,\xi)$ be a local cohomology class represented by a smooth function $u:X_\alpha\to\R$. By Lemma \ref{critical}, if 
$$(p,\kappa)\leq \left(p_1,1+p_1(m_1-1)\right),$$ 
then $X^{1j}_1(u_\infty)=0$, for any $j$ with $m_{1j}=m_1$. Since $X^{1j}_1$ is a left-invariant vector field of $N$ generated by an eigenvector of $\alpha$, we can apply Corollary \ref{constantctp} to conclude that $u_\infty$ is constant a.e.. Then $[u]=0$.

\medskip
\noindent Suppose that $(p,\kappa)>\left(p_1,1+p_1(m_1-1)\right)$. Consider $u_\infty$ any smooth function with compact support in $N$. Take $\eta:\R\to\R$ a smooth function such that $\eta(t)=0$ for $t<0$ and $\eta(t)=1$ for $t\geq 1$. Set $u:X_\alpha\to\R$ as $u(x,t)=\eta(t)u_\infty(x)$. Then the hororadial limit of $u$ is $u_\infty$, and the same computations as in Lemma \ref{nonconstant} show that $du\in L^{p,\kappa}(X_\alpha)$. Since $[u]\neq 0$, this finishes the proof.
\end{proof}

\begin{proof}[Proof of Theorem \ref{fixpoint}]
Suppose $X_\alpha$ is not of Carnot type. Then, either $m_1=1$, so $\mathfrak{h}_1$ and $\mathfrak{k}_1$ coincide precisely with the proper subalgebra of $\mathfrak{n}$ spanned by the $\mu_1$-eigenvectors of $\alpha$; or $m_1\geq 2$ and $\mathfrak{h}_1$ is a proper Lie subalgebra of $\mathfrak{n}$.

\medskip
\noindent In the first case, we necessarily have $d\geq 2$, and $i(\alpha)=\min\{i:K_i=N\}\geq 2$. In both cases, by Theorems \ref{localclassesinfty} and \ref{localclassesN}, we have
$$p_{\neq 0}(X_\alpha,\infty,I)\leq\left(p_{i(\alpha)},1+p_{i(\alpha)}(m_{i(\alpha)}-1)^+\right)<(p_1,m_1-1)=p_{\neq 0}(X_\alpha,\xi,I),$$
for any $\xi\in N$. This shows that $\infty$ is fixed by the boundary homeomorphism of any quasi-isometry of $X_\alpha$, and that it preserves the left cosets of $H_1$.
\end{proof}

\subsection{Proof of Corollary \ref{biLip}}

Let $N$ be of nilpotency class $\ell$, and let $F\in \mathrm{QIsom}(X_\alpha)$. We must prove that the boundary extension of $F$ is a bi-Lipschitz homeomorphism. 

\medskip
\noindent The derivation $\alpha$ induces a Carnot structure on the subgroup $H_1$, and therefore, the restriction of the snow-flaked parabolic visual metric $\varrho_\infty^{1/\mu_1}$ is bi-Lipschitz equivalent to the Carnto-Carath\'eodory distance of $H_1$, which is, in particular, a geodesic distance. Since $F$ preserves the left cosets of $H_1$, by \cite[Theorem 1.1]{LeDonneXie14}, it is enough to prove:

\begin{enumerate}
\item Any left coset is accumulated by parallel lefts cosets: for any $x\in N$, there exists a sequence $x_j\to x$, $x_jH_1\neq xH_2$, such that
$$\mathrm{dist}\left(x_jH_1,xH_1\right)=\mathrm{dist}(x'_j,xH_1)=\mathrm{dist}(x_jH_1,x'),$$
for all $x'_j\in x_jH$ and $x'\in xH$.
\item Left cosets diverge sub-linearly: for any $x,y\in N$, there exists a sequence $x_j\in xH_1$ such 
$$\frac{\mathrm{dist}\left(x_j,yH_1\right)}{\varrho_\infty(x_j,x)}\to 0,\text{ when }j\to +\infty.$$
\end{enumerate}
To prove (1), let $z_j$ be a sequence in the normalizer of $H_1$, not in $H_1$, and such that $z_j\to e$. Then $x_j=xz_j$ satisfies (1) because $\varrho_\infty$ is left-invariant.

\medskip
\noindent Let us prove (2). Let $x,y\in N$ and write $x^{-1}yx=\exp(Y)$, with $Y\in \mathfrak{n}$. For $t\geq 0$, let $x_t=x\exp(tX)$ with $X\in\mathfrak{h}_1$ a $\mu_1$-eigenvector of $\alpha$. Then $\varrho_\infty(x_t,yx_t)=\varrho_\infty(e,x_t^{-1}yx_t)$. By the Baker-Campbell-Hausdorff formula,
$$x_t^{-1}yx_t=\exp\left(Y'+p(Y,tX)\right),$$
where $Y'$ does not depend on $t$ and $p$ is a polynomial of degree $\ell-1$. Without loss of generality, we can suppose that $\varrho_\infty(e,x_t^{-1}yx_t)\to +\infty$ when $t\to +\infty$.

\medskip
\noindent By definition, $\varrho_\infty(e,z)=e^{s}$ if, and only if, $d_0(e,\tau(s)(z))=1$, where we recall that $d_0$ is the left-invariant Riemannian distance on $N$. Notice that $d_0(e,\exp(Z))\leq \|Z\|_0$ for any $Z\in\mathfrak{n}$. Moreover, there is a constant $\delta>0$ such that 
$$\delta\leq d_0(e,\exp(Z))\leq 1\text{ whenever }\|Z\|_0=1.$$
First, observe that
$$\mathrm{Exp}(-s\alpha)\left(\left[\partial^{ij}_k,\partial^{rs}_l\right]\right)=e^{-(\mu_i+\mu_r)s}\sum_{h=1,h'=1}^{k,l}\frac{(-s)^{m_{ij}+m_{rs}-k-l}}{(m_{ij}-k)!(m_{rs}-l)!}\left[\partial_k^{ij},\partial_l^{rs}\right].$$
This implies, by letting $Z_s:=\mathrm{Exp}(-s\alpha)\left(p(Y,tX)\right)$, that
$$\left\|Z_s\right\|_0\leq \sum_{h=1}^{\ell-1}t^he^{-(h+1)\mu_1 s}q_h(s)\lesssim \sum_{h=1}^{\ell-1}t^he^{-(h+1/2)\mu_1 s},$$
where $q_h(s)$ are polynomials in $s$, and the last inequality holds for $s$ large enough. Choosing $s=s_t$ so that $d_0(e,\exp(tZ_s))=1$, we obtain from the last inequality
$$s_t\leq \frac{\ell}{(\ell+1/2)\mu_1}\log\, t+O(1).$$
Let $X_s=\mathrm{Exp}(-s\alpha)(X)$. A similar, but simpler, computation shows that for 
$$s^*=s^*_t:=\frac{1}{\mu_1}\log\,t+O(1),$$
we have $\delta\leq d_0(e,\exp(tX_{s^*}))\leq 1$. Putting everything together, we obtain
$$\frac{d(x_t,yx_t)}{d(x_t,x)}\lesssim e^{s_t-s_t^*}\to 0,$$
when $t\to+\infty$. This concludes the proof of (2).

\subsection{The abelian case: proof of Corollary \ref{abelian}}

Suppose that $N\simeq \R^n$ is abelian, and let $X_\alpha$ and $X_\beta$ be two quasi-isometric purely real Heintze groups. Notice that being of Carnot type in this case is equivalent to the derivation being a scalar multiple of the identity. Therefore, we can assume that neither $X_\alpha$ nor $X_\beta$ is of Carnot type. We will use the same notations as before, but we add a superscript $\alpha$ or $\beta$ to indicate to which derivation it corresponds.

\medskip
\noindent Write $\alpha$ and $\beta$ in their Jordan form
$$\alpha=\bigoplus_{i=1}^{d^\alpha}\bigoplus_{j=1}^{k^\alpha_i}J(\mu^\alpha_i,m^\alpha_{ij}),\ \beta=\bigoplus_{i=1}^{d^\beta}\bigoplus_{j=1}^{k^\beta_i}J(\mu^\beta_i,m^\beta_{ij}).$$
Consider the functions $s^\alpha$ and $s^\beta$ defined on the index set $\{(p,\kappa):p\geq 1,\kappa\geq 0\}$ which give the topological dimension of the spectrum of the local cohomology with respect to $\infty$.

\medskip
\noindent Since $N$ is abelian, we have $\mathfrak{k}^\alpha_i=W^\alpha_i$ and $\mathfrak{k}^\beta_i=W^\beta_i$. By Theorem \ref{localclassesinfty}, we know that
$$s^\alpha(p,\kappa)=n-\sum_{r=1}^i\mathrm{dim}V^\alpha_r\text{ and }s^\beta(p,\kappa)=n-\sum_{r=1}^i\mathrm{dim}V^\beta_r,$$
for $p\in (p_{i+1}^\alpha,p_i^\alpha)$ and $p\in (p_{i+1}^\beta,p_i^\beta)$ respectively. Note that at each critical exponent $s_\alpha$ and $s_\beta$ jump and change their values. Since $s_\alpha=s_\beta$, we conclude that $d^\alpha=d^\beta=d$, and $\mathrm{dim}V_i^\alpha=\mathrm{dim}V_i^\beta$ for all $i=1,\ldots,d$. In particular,
$$\frac{\mu_i^\alpha}{\mu_i^\beta}=\frac{\mathrm{tr}(\alpha)}{\mathrm{tr}(\beta)}=\lambda>0,\ i=1,\ldots,d.$$
On the other hand, 
$$\mathrm{dim}V_i^\alpha=\sum_{j=1}^{k^\alpha_i}m^\alpha_{ij},\text{ and }\mathrm{dim}V_i^\beta=\sum_{j=1}^{k^\beta_i}m^\beta_{ij}.$$
Moreover, by Theorem \ref{localclassesinfty}, we already know that for each $i\in \{1,\ldots,d\}$,
$$m^\alpha_i=\max\{m^\alpha_{ij}:j=1,\dots,k_i^\alpha\}=m^\beta_i=\max\{m^\beta_{ij}:j=1,\dots,k_i^\beta\}=m_i.$$
In particular, if $m_i=1$ for all $i$ (that is, $\alpha$ and $\beta$ are diagonalizable), then $k_i^\alpha=k_i^\beta$ also, and the proof is completed in this case. We will proceed by induction on $d$ and the size of the biggest Jordan block of the first eigenvalue. 

\medskip
\noindent Suppose that for $i=2,\ldots,d$ we have $k_i^\alpha=k_i^\beta=k_i$ and $m^\alpha_{ij}=m_{ij}^\beta$ for $j=1,\ldots,k_i$. If $d=1$ the assumption is empty. We will show that the same is true for $i=1$. This is of course the case if $m_1=1$. So assume that $m_1\geq 2$.

\medskip
\noindent Let $F:X_\alpha\to X_\beta$ be a quasi-isometry and denote also by $F$ its boundary extension. By Theorem \ref{localclassesinfty}, $F$ induces a homeomorphism $F_1:\R^n/H^\alpha_1\to \R^n/H^\beta_1$. Let $\pi_\alpha$ and $\pi_\beta$ be the canonical projections onto $\R^n/H^\alpha_1$ and $\R^n/H^\beta_1$ respectively. The linear actions verify
$$\pi_\alpha\circ \mathrm{Exp}(t\alpha)=\mathrm{Exp}(t\alpha_1)\circ\pi_\alpha\text{ and }\pi_\alpha\circ \mathrm{Exp}(t\beta)=\mathrm{Exp}(t\beta_1)\circ\pi_\beta,$$
where now the biggest Jordan block of the first eigenvalue is $m_1-1$ (notice that $k_1^{\alpha_1}=k_1^\alpha$ and $k_1^{\beta_1}=k_1^\beta$). Let $\varrho_\alpha$ and $\varrho_\beta$ be the parabolic visual metrics defined before. The key point to apply the induction argument is to notice that the left cosets of $H_1^\alpha$ and $H^\beta_1$ are equidistant for $\varrho_\alpha$ and $\varrho_\beta$ respectively. Let $\tilde{\varrho}_\alpha$ and $\tilde{\varrho}_\beta$ be the quotient distances. On checks, see \cite[Lemma 15.9]{JeremyTyson01}, that $F_1$ is a quasi-symmetry in these metrics. Since they are left invariant and the linear actions of $\alpha_1$ and $\beta_1$ are dilations with dilation expansion $e^t$, they are bi-Lipschitz equivalent to the respective parabolic visual metrics $\varrho_{\alpha_1}$ and $\varrho_{\beta_1}$. This shows that $(\R^n/H^\alpha_1,\varrho_{\alpha_1})$ is quasi-symmetric to $(\R^n/H^\beta_1,\varrho_{\beta_1})$, and induction applies. 

\medskip
\noindent Note that we have shown, in particular, the case when $d=1$. So suppose that $d\geq 2$ and that the theorem is proved for $d-1$. Considering the quotients by $K_1^\alpha$ and $K_1^\beta$ and applying similar arguments as above, we show by induction that for $i=2,\ldots,d$ we have $k_i^\alpha=k_i^\beta=k_i$ and $m^\alpha_{ij}=m_{ij}^\beta$ for $j=1,\ldots,k_i$. Then we can apply the previous assertion and conclude that $k_1^\alpha=k_1^\beta=k_1$ and $m^\alpha_{1j}=m^\beta_{1j}$ for all $j=1,\ldots,k_1$. This ends the proof.

\bibliographystyle{alpha}
\bibliography{references.bib}
\end{document}